\def\Res{\operatorname{Res}\nolimits}
\title{Balade newtonienne entre analyse et arithmétique}
\author{Antoine Chambert-Loir}
\address{%
Université Paris Cité, Institut de Mathématiques de Jussieu-Paris Rive Gauche, F-75013, Paris, France}
\email{antoine.chambert-loir@u-paris.fr}
\begin{abstract}
Inventés par Kurt Hensel à la toute fin du 19e siècle sur le modèle
des séries en une indéterminée, les nombres $p$-adiques sont devenus
non seulement un outil indispensable de l'arithmétique contemporaine,
mais un sujet d'étude en soi. Dans ce texte, issu de deux exposés
aux journées X-UPS 2023, j'expliquerai
leur construction, comment ils s'insèrent dans un cadre plus vaste,
entre analyse et arithmétique, où deux constructions portant le nom
d'Isaac Newton jouent un rôle central: la méthode de Newton et la
notion de polygone de Newton.
\end{abstract}
\begin{document}
\maketitle

\tableofcontents

\section{Valeurs absolues, topologies, exemples}

\begin{defi}[\citealp{Kurschak-1913}]\label{defi.va}
Soit $A$ un anneau commutatif. 
On appelle \emph{valeur absolue}, ou \emph{semi-norme multiplicative},
sur~$A$ une application $m\colon A\to\R_+$ vérifiant les propriétés suivantes:
\begin{enumerate}
\item \label{va-01}
On a $m(0)=0$ et $m(1) = 1$ \emph{(normalisation)};
\item \label{va-mul}
Pour $a,b \in A$, on a $m(ab) = m(a) m(b)$
\emph{(multiplicativité)};
\item \label{va-it}
Pour $a,b\in A$, on a $m(a+b)\leq m(a) + m(b)$
\emph{(inégalité triangulaire)}.
\end{enumerate}
\end{defi}

\begin{exem}
Les lecteurices de ce texte connaissent probablement
deux exemples de valeurs absolues:
la valeur absolue usuelle sur le corps~$\R$
des nombres réels, et le module sur le corps~$\C$
des nombres complexes.
\end{exem}

\subsection{}
Soit $A$ un anneau commutatif et soit $m$ une valeur absolue sur~$A$.

Supposons que $A$ est un corps.
Alors, $0$ est le seul élément de~$A$ de valeur absolue nulle.
En effet, si $a \in A$ est non nul, on a $1=m(1)=m(a\cdot a^{-1})
=m(a)\cdot m(a^{-1})$, donc $m(a)\neq0$.

C'est le cas le plus important, et l'on peut s'y ramener comme suit.
Appelons \emph{noyau} de la valeur absolue
l'ensemble~$P$ des $a\in A$ tels que $m(a)=0$;
c'est un idéal premier de~$A$. Il existe sur l'anneau quotient~$A/P$
une unique application  telle que $m(\overline a)=m(a)$
pour tout $a\in A$. Cette application est encore une valeur absolue.
Par construction, elle ne s'annule qu'en~$0$.
Il existe alors sur le corps des fractions~$F$ de~$A/P$
une unique application~$m$ telle que 
$m(\overline a/\overline b)=m(\overline a)/m(\overline b)$
pour tous $\overline a,\overline b\in A/P$ tels que $m(\overline b)\neq0$.
Cette application est encore une valeur absolue.

\subsection{}
La motivation initiale de~\cite{Kurschak-1913}
pour l'introduction de la notion
de valeur absolue sur un corps commutatif~$F$ est qu'elle
permet de définir une notion de limite sur~$F$. 
Plus d'un siècle plus tard, expliquons comment faire 
dans le cas plus général d'un anneau.

Soit donc~$m$ une valeur absolue sur un anneau~$A$.
De la définition, on déduit que $(x,y)\mapsto m(x-y)$
est une semi-distance sur~$A$:
la symétrie découle de ce que $1=m(1)=m(-1)^2$
(propriété~(\ref{defi.va}/\ref{va-mul})), donc $m(-1)=1$,
et l'inégalité triangulaire est la propriété~(\ref{defi.va}/\ref{va-mul}).
Cette semi-distance est une distance si et seulement si $m$
ne s'annule qu'en~$0$.

Pour $a\in A$ et $r\in\R_+$, on peut alors
définir les boules $B(a,r)$ et $B^\circ(a,r)$,
respectivement circonférenciées et non circonférenciées,
dans~$A$: ce sont les ensembles des $x\in A$ 
tels que $m(x-a)\leq r$, resp. $m(x-a)<r$.
La semi-distance~$m$ définit une topologie
sur~$A$ pour laquelle
les voisinages d'un point~$a\in A$
sont les ensembles qui contiennent une boule non circonférenciée
$B^\circ(a,r)$, avec $r>0$.

\begin{prop}
Soit $A$ un anneau commutatif et soit $m$ une valeur absolue sur~$A$;
munissons $A$ de la topologie définie par~$m$.
\begin{enumerate}
\item
L'addition et la multiplication de~$A$ sont continues,
\item 
Si $A$ est un corps, l'inversion est continue sur~$A^\times$.
\item 
L'application~$m$ est uniformément continue.
\item 
La topologie de~$A$ est séparée si et seulement si $m$ ne s'annule qu'en~$0$.
\end{enumerate}
\end{prop}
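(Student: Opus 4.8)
La stratégie est de traiter les quatre assertions séparément à l'aide d'estimations élémentaires tirées de la définition~\ref{defi.va}, l'outil récurrent étant l'inégalité triangulaire \emph{renversée} $|m(x)-m(y)|\leq m(x-y)$, que l'on obtient en combinant $m(x)=m\bigl((x-y)+y\bigr)\leq m(x-y)+m(y)$ et l'inégalité symétrique $m(y)\leq m(x-y)+m(x)$ (laquelle repose sur $m(y-x)=m(x-y)$, c'est-à-dire $m(-1)=1$, déjà noté plus haut).

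Je commencerais par l'assertion~(3), la plus simple: l'inégalité renversée montre que $m$ est lipschitzienne de rapport~$1$ pour la semi-distance, donc uniformément continue. Pour l'assertion~(1), la continuité (et même l'uniforme continuité) de l'addition découle de $m\bigl((x+y)-(a+b)\bigr)\leq m(x-a)+m(y-b)$; pour la multiplication, j'écrirais $xy-ab=x(y-b)+(x-a)b$, d'où $m(xy-ab)\leq m(x)\,m(y-b)+m(b)\,m(x-a)$, puis je majorerais $m(x)\leq m(a)+m(x-a)$ pour constater que ce majorant tend vers~$0$ lorsque $(x,y)\to(a,b)$.

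Pour l'assertion~(2), soit $a\in A^\times$; comme $A$ est un corps, $m(a)\neq0$. L'identité $x^{-1}-a^{-1}=(a-x)/(xa)$ donne $m(x^{-1}-a^{-1})=m(x-a)/\bigl(m(x)\,m(a)\bigr)$, et l'inégalité renversée fournit $m(x)\geq m(a)/2$ dès que $m(x-a)\leq m(a)/2$; pour de tels~$x$ on a donc $m(x^{-1}-a^{-1})\leq 2\,m(x-a)/m(a)^2$, ce qui établit la continuité de l'inversion en~$a$. Enfin, pour l'assertion~(4): si $m$ ne s'annule qu'en~$0$, la semi-distance est une distance et deux points distincts $a,b$ sont séparés par les boules $B^\circ(a,r)$ et $B^\circ(b,r)$ où $r=m(a-b)/2>0$; réciproquement, s'il existe $c\neq0$ avec $m(c)=0$, alors $c$ appartient à toute boule $B^\circ(0,r)$ avec $r>0$, donc à tout voisinage de~$0$, et symétriquement $0$ appartient à tout voisinage de~$c$, de sorte que ces deux points distincts ne peuvent être séparés par des ouverts.

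Aucune de ces étapes n'offre de résistance sérieuse; le seul geste qui demande un peu d'attention, et qui revient dans les assertions~(1) et~(2), est la minoration de $m(x)$ au voisinage de~$a$, toujours obtenue grâce à l'inégalité triangulaire renversée.
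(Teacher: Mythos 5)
Votre démonstration est correcte et suit pour l'essentiel la même stratégie que celle du texte : les points (1), (3) et (4) sont traités de façon identique, et la décomposition $xy-ab=x(y-b)+(x-a)b$ jointe à la majoration $m(x)\leq m(a)+m(x-a)$ n'est qu'une variante sans conséquence de la décomposition $xy-ab=(x-a)(y-b)+a(y-b)+(x-a)b$ utilisée dans le texte. La seule différence réelle porte sur l'inversion. Vous exploitez la multiplicativité comme une \emph{égalité}, $m(x^{-1}-a^{-1})=m(x-a)/\bigl(m(x)\,m(a)\bigr)$, puis minorez $m(x)\geq m(a)/2$ par l'inégalité triangulaire renversée ; le texte, lui, évite cette minoration et établit d'abord que $m(1/(1-u))\leq 2$ dès que $m(u)\leq 1/2$, au moyen de l'inégalité auto-référente $m(1/(1-u))\leq 1+m(u)\,m(1/(1-u))$, avant d'écrire $1/x-1/a=\frac{a-x}{a^2}\cdot\frac{1}{1-(a-x)/a}$. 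Votre argument est plus direct et plus court ; celui du texte a l'avantage de ne reposer que sur la sous-multiplicativité et l'inégalité triangulaire, et se transposerait donc à des normes d'algèbres seulement sous-multiplicatives. Les deux chemins aboutissent à la même majoration $m(1/x-1/a)\leq 2\,m(x-a)/m(a)^2$ au voisinage de~$a$, donc à la même conclusion.
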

\begin{proof}
La continuité de l'addition résulte de l'inégalité triangulaire:
pour $a,b,x,y\in A$, on écrit
\[ m((x+y)-(a+b))=m((x-a)+(y-b))\leq m(x-a)+m(y-b), \]
si bien que $x+y$ tend vers~$a+b$ lorsque $x\to a$ et $y\to b$.
Pour la multiplication, on écrit de même
\begin{align*}
 m(xy-ab) & =m((x-a)(y-b)+a(y-b)+(x-a)b) \\
 &\leq m(x-a)m(y-b)+m(a)m(y-b)+m(x-a)m(b) , \end{align*}
si bien que $xy$ tend vers~$ab$ lorsque $x\to a$ et $y\to b$.

Supposons maintenant que $A$ soit un corps
et démontrons que l'inversion est continue.
Remarquons que si $m(a)\leq 1/2$, alors $m(1/(1-a))\leq 2$.
En effet, on écrit
$ \frac 1{1-a}=1+ \frac{a}{1-a}$, donc
\[ m(1/(1-a))\leq 1 + m(a) m(1/(1-a))\leq 1+ \frac12 m(1/(1-a)) , \]
d'où on déduit l'inégalité voulue.
Alors, pour $a,x\in A^*$, on écrit
\[ \frac1x-\frac1a = \frac{a-x}{ax}=\frac{a-x}{a^2} \frac1{1-(a-x)/a}.\]
Si $m(a-x)\leq \frac12 m(a)$, alors $m((a-x)/a))\leq 1/2$,
puis
\[ m(1/x-1/a) \leq 2 m(a-x)/m(a^2), \] 
d'où la continuité requise.

L'uniforme continuité de~$m$ résulte de la variante
\[ \abs{m(a)-m(b)} \leq m(a-b) \]
de l'inégalité triangulaire: comme $a=(a-b)+b$, on a $m(a)\leq m(a-b)+m(b)$,
d'où $m(a)-m(b)\leq m(a-b)$; par symétrie, $m(b)-m(a)\leq m(b-a)=m(a-b)$.

Si $m$ ne s'annule qu'en~$0$, l'application $(x,y)\mapsto m(x-y)$
est une distance, donc la topologie de~$A$ est séparée. 
Inversement, un élément $a\in A$ tel que $m(a)=0$
appartient à tout voisinage de~$0$, donc est nul
si la topologie de~$A$ est séparée. 
\end{proof}

\begin{exem}[Valeur absolue triviale] \label{exem.va-triviale}
Sur n'importe quel corps commutatif~$F$, on peut également
observer que l'on définit également une valeur
absolue~$m$ en posant
$m(0)=0$ et $m(a)=1$ pour tout $a\in F$
tel que $a\neq 0$. Cette valeur absolue
est dite \emph{triviale}; 
d'ailleurs, la définition originelle d'une valeur absolue
par  \cite{Kurschak-1913} l'excluait.

Pour la valeur absolue triviale,
la boule non circonférenciée de centre~$0$ et de rayon~$1$
ne contient que~$0$.
La topologie de~$F$ est donc \emph{discrète}.

Inversement, supposons que la topologie de~$F$ définie
par une valeur absolue~$m$ soit discrète,
et démontrons que $m$ est la valeur absolue triviale.
Par hypothèse, il existe un nombre réel~$r>0$ 
tel que la boule ouverte $B(0,r)$ ne contienne que le point~$0$. 
Soit $x\in F^\times$ et démontrons que $m(x)=1$.
Quitte à changer~$x$ en~$1/x$, on peut supposer que $m(x)<1$.
Il existe alors un entier~$n\geq 1$ tel que $m(x)^n<r$;
puisque $m(x^n)=m(x^n)$, on a donc $m(x^n)<r$, d'où $x^n=0$ d'après
le choix de~$r$. Ainsi, $x=0$.
\end{exem}

\begin{exem}[Valeurs absolues $p$-adiques]
\label{exem.vp}
Considérons le cas du corps~$\Q$ des nombres rationnels.
Soit $p$ un nombre premier. La valeur absolue $p$-adique~$m_p$
que nous allons définir
est essentiellement équivalente à la \emph{valuation $p$-adique}~$v_p$
qui est peut-être plus classique.

Si $a$ est un nombre rationnel non nul, 
il existe un unique entier relatif~$n$
telle que $a$ puisse s'écrire
$a=p^n b/c$, où $b$ et~$c$ sont des entiers non multiples de~$p$.
En effet, si $a=p^nb/c=p^{n'}b'/c'$,
alors $p^nbc'=p^{n'}b'c$, et l'égalité $n=n'$ découle
de l'unicité de la décomposition en facteurs premiers.
Cet entier~$n$ s'appelle la valuation $p$-adique de~$a$;
on le note~$v_p(a)$. Posons alors $m_p(a)=p^{-v_p(a)}$.

On pose enfin $m_p(0)=0$ et $v_p(0)=+\infty$.

Vérifions que $m_p$ est une valeur absolue sur~$\Z$;
on l'appelle la \emph{valeur absolue $p$-adique}.
La propriété~(\ref{defi.va}/\ref{va-01}) est évidente
et la propriété~(\ref{defi.va}/\ref{va-mul}) découle
de la définition.
La propriété~(\ref{defi.va}/\ref{va-it}) est en fait vraie sous une
forme plus forte:
\begin{enumerate}
\setcounter{enumi}3
\item \label{va-iu}
Pour $a,b\in \Q$, on a $m_p(a+b)\leq \sup(m_p(a),m_p(b))$
(\emph{inégalité ultramétrique}).
\end{enumerate}
Il suffit de traiter le cas où $a$ et~$b$ sont non nuls;
on écrit $a=p^n u/v$, $b=p^m r/s$,
où $u,v,r,s$ sont des entiers non multiples de~$p$.
Si $m\leq n$, alors $a+b=p^m (p^{n-m}u/v + r/s)
= p^m (p^{n-m}us+rv)/sv$; si $m'$ est l'exposant
de~$p$ dans la décomposition en facteurs premiers
de $p^{n-m}us+rv$, on a donc $v_p(a+b)=m+m'$,
et en particulier $v_p(a+b)\geq m=v_p(b)=\inf(v_p(a),v_p(b))$,
d'où l'inégalité voulue. Le cas où $n\leq m$ est symétrique.

Remarquons tout de suite la \emph{formule du produit:}
\[ m_\infty(a)\prod_{\text{$p$ premier}} m_p(a) = m_0(a) \]
qui relie valeur absolue usuelle~$m_\infty$, les valeurs
absolues $p$-adiques~$m_p$, où $p$ parcourt l'ensemble
des nombres premiers, et la valeur absolue triviale~$m_0$ sur~$\Q$.
L'égalité est évidente si $a=0$. Si $a\neq 0$, il s'agit
de démontrer 
\[ m_\infty(a) \prod_{\text{$p$ premier}} m_p(a) = 1\ ;\]
on écrit pour cela la décomposition en facteurs premiers,
$a=\pm \prod_p p^{v_p(a)}$, si bien que
\[ m_\infty(a) \prod_{p} m_p(a) = \left(\prod_p p^{v_p(a)}\right)
 \prod_p p^{-v_p(a)} = 1=m_0(a). \]
\end{exem}

\begin{exem}
Le cas où $F$ est le corps des fractions rationnelles
en une indéterminée~$T$ à coefficients
dans un corps commutatif~$k$ est particulièrement important.
Pour tout polynôme irréductible~$\pi\in k[T]$, 
on dispose d'une \emph{valuation} $\pi$-adique~$v_\pi$ sur~$k(T)$,
telle que 
$v_\pi(f)=n$ si $f\in k(T)$ s'écrit sous la forme
$\pi^n g/h$, où $g$ et~$h$ sont des polynômes non divisibles par~$\pi$.
On en déduit une \emph{valeur absolue} $\pi$-adique
sur~$k(T)$, définie par $m_\pi(f)=e^{- v_\pi(f)\deg(\pi)}$,
où la normalisation choisie ($c=e^{-\deg(\pi)}$)
anticipe la formule du produit.

On dispose également d'une valuation « à l'infini » $v_\infty$,
définie par $v_\infty(g/h)=-\deg(g)+\deg(h)$ si $g$ et $h$ 
sont des polynômes non nuls, et $v_\infty(0)=+\infty$.
Enfin, on pose $m_\infty(f)=e^{-v_\infty(f)}$ pour tout $f\in k(T)$.

Ces normalisations donnent lieu à la formule du produit:
pour toute fraction rationnelle~$f\in k(T)$, on a 
l'égalité 
\[ m_\infty(f) \cdot \prod_{\pi} m_\pi(f) = m_0(f) \]
qui traduit simplement l'égalité
\[ \sum_{\pi} v_\pi(f) \deg(\pi)  = \deg(f) \]
si $f$ est un polynôme non nul.
\end{exem}

\begin{exem}
L'exemple des valeurs absolues $p$-adiques sur~$\Q$
et celui des valeurs absolues sur un corps de fractions rationnelles
en une indéterminée associées à un polynôme irréductible
se généralisent \emph{mutatis mutandis}
au cas du corps des fractions~$F$ d'un anneau factoriel~$A$,
en remplaçant le nombre premier~$p$ ou le polynôme
irréductible~$\pi$  par un élément irréductible~$\pi$ de~$A$.

On commence en effet par définir la valuation $\pi$-adique d'un
élément $f\in F$ par $v_\pi(f)=+\infty$ si $f=0$,
et par $v_\pi(f)=n$ si $n$ est un entier tel qu'il existe
$u$ et~$v$ dans~$A$, non multiples de~$\pi$, tels que $f=\pi^n u/v$.
Pour tout nombre réel~$c>1$, on en déduit une \emph{valeur absolue} 
$\pi$-adique
sur~$F$, définie par $m_\pi(f)=c^{-v_\pi(f)}$.
Elle est ultramétrique.

Il n'y a pas, en général, de façon privilégiée de choisir~$c$.
De toutes façons, lorsque $c$ varie, ces valeurs absolues
définissent toutes la même topologie.
\end{exem}

\begin{lemm}
Soit $m$ une valeur absolue sur un anneau commutatif~$A$.
Les propriétés suivantes sont équivalentes:
\begin{enumerate}
\item La valeur absolue~$m$ est ultramétrique;
\item Pour tout $a\in A$ tel que $m(a)\leq 1$, on a 
 $m(1+a)\leq 1$ ;
\item Pour tout $n\in\N$, on a $m(n)\leq 1$.
\item La suite $(m(n))$ ne tend pas vers l'infini.
\end{enumerate}
\end{lemm}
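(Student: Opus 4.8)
The four properties split into a ``soft'' group, (1)--(3), and the analytic statement (4); the plan is to prove the cycle (1)$\Rightarrow$(2)$\Rightarrow$(3)$\Rightarrow$(1), and then (3)$\Leftrightarrow$(4). The implications among the first three are essentially formal. For (1)$\Rightarrow$(2): if $m$ is ultrametric and $m(a)\le 1$, then $m(1+a)\le\max(m(1),m(a))=\max(1,m(a))\le 1$. For (2)$\Rightarrow$(3): one inducts on~$n$, using $m(0)=0$, $m(1)=1$ and, for the step, $m(n+1)=m(1+n)\le 1$ by~(2) applied to (the image in~$A$ of)~$n$, whose absolute value is~$\le 1$ by the inductive hypothesis. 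For (3)$\Rightarrow$(1): this is the familiar binomial argument. For $a,b\in A$ and an integer $N\ge 1$, the identity $(a+b)^N=\sum_{k=0}^N\binom{N}{k}a^kb^{N-k}$ together with the triangle inequality, multiplicativity, and~(3) gives
\[ m(a+b)^N=m\Bigl(\sum_{k=0}^N\binom{N}{k}a^kb^{N-k}\Bigr)\le\sum_{k=0}^N m\Bigl(\binom{N}{k}\Bigr)m(a)^km(b)^{N-k}\le (N+1)\max(m(a),m(b))^N ; \]
taking $N$-th roots and letting $N\to\infty$ (so that $(N+1)^{1/N}\to 1$) yields $m(a+b)\le\max(m(a),m(b))$.

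The implication (3)$\Rightarrow$(4) is immediate, since~(3) says that the sequence $(m(n))$ is bounded by~$1$. The substantial point is (4)$\Rightarrow$(3), which I would establish by contraposition: assuming that some integer $n_0$ has $m(n_0)>1$ (necessarily $n_0\ge 2$, since $m(0)=0$ and $m(1)=1$), I want to show that $m(n)\to+\infty$. The key lemma is the estimate
\[ m(b)\le\max(1,m(a))^{\log b/\log a}\qquad\text{for all integers }a,b\ge 2. \qquad(\star) \]
To prove~$(\star)$, expand $b^N$ in base~$a$, say $b^N=\sum_{i=0}^d c_ia^i$ with $0\le c_i<a$ and $d\le N\log b/\log a$; then $m(b)^N=m(b^N)\le\sum_{i=0}^d m(c_i)m(a)^i\le (d+1)\,C_a\,\max(1,m(a))^d$, where $C_a=\max\{m(j):0\le j<a\}$ is a finite constant \emph{independent} of~$N$ and~$b$, and one finishes by taking $N$-th roots and letting $N\to\infty$. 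Granting~$(\star)$: applying it with $b=n_0$ and an arbitrary integer $a\ge 2$ forces $\max(1,m(a))>1$, hence $m(a)>1$ for \emph{every} integer $a\ge 2$; then applying~$(\star)$ with $a=n$ and $b=2$ gives $m(2)\le m(n)^{\log 2/\log n}$, that is $m(n)\ge m(2)^{\log n/\log 2}=n^{\alpha}$ with $\alpha=\log m(2)/\log 2>0$, so indeed $m(n)\to+\infty$, contradicting~(4).

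The only genuine obstacle is~$(\star)$ together with the deduction just made from it: they encode the fact that a sequence $(m(n))$ which merely fails to tend to infinity is automatically bounded (indeed by~$1$) --- the subtle point being that, a priori, ``not tending to infinity'' is much weaker than ``bounded''. Everything else is routine bookkeeping with the axioms.
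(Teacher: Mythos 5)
Votre démonstration est correcte. Pour le cycle (1)$\Rightarrow$(2)$\Rightarrow$(3)$\Rightarrow$(1), vous suivez exactement la même route que le texte : vérification directe pour (1)$\Rightarrow$(2), récurrence pour (2)$\Rightarrow$(3), et pour (3)$\Rightarrow$(1) l'argument binomial avec la majoration $m(a+b)^N\leq (N+1)\sup(m(a),m(b))^N$ suivie de l'extraction de racines $N$-ièmes. La vraie différence concerne le point~(4) : la preuve imprimée dans le texte s'arrête après le cycle (1)--(3) et ne traite pas du tout l'équivalence avec~(4), alors que vous fournissez l'implication non triviale (4)$\Rightarrow$(3) par contraposition, via l'estimation $(\star)$ obtenue en développant $b^N$ en base~$a$. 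Cette estimation est précisément celle que le texte déploie plus loin dans sa démonstration du théorème d'Ostrowski (où l'on écrit $x^n=x_0+ax_1+\dots+a^{p-1}x_{p-1}$ pour conclure $m(x)^{1/\log(x)}\leq\sup(1,m(a))^{1/\log(a)}$) ; vous la redécouvrez donc de façon indépendante et l'utilisez au bon endroit. Votre argument est complet et correct, y compris la déduction finale $m(n)\geq n^{\alpha}$ avec $\alpha>0$ dès qu'un entier a une valeur absolue $>1$, qui établit bien que « ne pas tendre vers l'infini » force la bornitude par~$1$ — le point subtil que vous identifiez justement. Seule remarque de détail : dans $(\star)$, il faut s'assurer que $C_a$ et $(d+1)$ sont absorbés par la racine $N$-ième, ce qui est le cas puisque $d$ croît linéairement en $N$ et $C_a$ est indépendant de $N$ ; vous le dites, et c'est exactement le soin requis.
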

\begin{proof}
L'implication~(1)$\Rightarrow$(2) est évidente:
si $m$ est ultramétrique et si $a\in A$
vérifie $m(a)\leq 1$, alors $m(1+a)\leq \sup(m(1),m(a))=1$.

L'implication (2)$\Rightarrow$(3) se démontre par une
récurrence immédiate.

Prouvons l'implication (3)$\Rightarrow$(1).
Soit donc $a,b\in A$. 
Soit $n$ un entier naturel non nul;
la formule du binôme pour $(a+b)^n$
et l'inégalité triangulaire entraînent
\[ m(a+b)^n = m((a+b)^n) \leq \sum_{k=0}^n m \big(\tbinom nk a^{n-k}b^k\big)
 \leq \sum_{k=0}^n m(a)^{n-k} m(b)^k \]
puisque $m\big(\binom nk\big)\leq 1$ pour tout $k$.
On en déduit l'inégalité 
\[ m(a+b) ^n \leq (n+1) \sup(m(a),m(b))^n. \]
En prenant les racines $n$-ièmes et en faisant tendre~$n$ vers l'infini, 
on en déduit l'inégalité ultramétrique.
\end{proof}

\begin{rema}\label{rema.car-zero}
Soit $F$ un corps muni d'une valeur absolue~$m$ qui n'est pas ultramétrique.
Alors, $F$ est de caractéristique zéro.
En effet, il existerait sinon un entier~$n$ tel que $m(n)$ soit maximal; 
en particulier, $m(n^2)\leq m(n)$, d'où $m(n)\leq 1$ puisque $m(n)\neq0$.
\end{rema}

\begin{prop}
Soit $m$ et $m'$ des valeurs absolues sur un corps commutatif~$F$.
Si la topologie définie par~$m'$ est plus fine que la topologie
définie par~$m$, il existe un nombre réel~$\rho\geq0$
tel que $m'(x)=m(x)^\rho$ pour tout $x\in F^\times$, et inversement.
\end{prop}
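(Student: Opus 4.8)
Commençons par le sens réciproque. Si $\rho=0$, alors $m'$ est la valeur absolue triviale, dont la topologie est discrète, donc plus fine que toute topologie. Si $\rho>0$, on remarque que
\[ B^\circ_{m'}(a,r)=B^\circ_{m}(a,r^{1/\rho}) \]
pour tout $a\in F$ et tout $r>0$: les deux valeurs absolues ont les mêmes boules non circonférenciées, définissent donc la même topologie, et en particulier celle de~$m'$ est plus fine que celle de~$m$.

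Pour le sens direct, on traduit d'abord l'hypothèse: dire que la topologie de~$m'$ est plus fine que celle de~$m$ revient à dire que l'application identique de~$(F,m')$ dans~$(F,m)$ est continue. En particulier, pour $a\in F^\times$, si la suite $(a^n)$ tend vers~$0$ dans~$(F,m')$, elle tend vers~$0$ dans~$(F,m)$; comme $m'(a^n)=m'(a)^n$, la première condition signifie $m'(a)<1$, et de même pour~$m$. On dispose donc de l'implication
\[ m'(a)<1 \ \Longrightarrow\ m(a)<1 \qquad (a\in F^\times). \]
On écarte ensuite deux cas particuliers. Si $m$ est triviale, sa topologie est discrète, donc aussi celle de~$m'$ qui est plus fine, si bien que $m'$ est triviale d'après l'exemple~\ref{exem.va-triviale}. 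Dans ce cas, comme dans celui où $m'$ est triviale, on a $m'(x)=1=m(x)^0$ pour tout $x\in F^\times$, et $\rho=0$ convient.

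On suppose maintenant $m$ et~$m'$ toutes deux non triviales et on fixe $c\in F^\times$ tel que $m'(c)<1$; l'implication ci-dessus donne $m(c)<1$, de sorte que $\log m(c)$ et~$\log m'(c)$ sont strictement négatifs. On pose
\[ \rho=\frac{\log m'(c)}{\log m(c)}>0 \]
et, pour $a\in F^\times$,
\[ f(a)=\frac{\log m(a)}{\log m(c)},\qquad g(a)=\frac{\log m'(a)}{\log m'(c)}, \]
de sorte que $f(c)=g(c)=1$ et que $f(a^{-1})=-f(a)$, $g(a^{-1})=-g(a)$ pour tout $a\in F^\times$. Le point crucial est de montrer que $f=g$. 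Soient $a\in F^\times$ et $p,q\in\Z$ avec $q>0$ tels que $g(a)<p/q$; un bref calcul (en prenant garde à ce que la multiplication par $\log m'(c)<0$ renverse le sens des inégalités) montre que cette condition équivaut, via la multiplicativité de~$m'$, à $m'(c^p a^{-q})<1$. L'implication ci-dessus fournit alors $m(c^p a^{-q})<1$, ce qui, de la même manière, équivaut à $f(a)<p/q$. Comme les rationnels de dénominateur positif sont denses dans~$\R$, on en déduit $f(a)\leq g(a)$; en appliquant cette inégalité à~$a^{-1}$ et en utilisant $f(a^{-1})=-f(a)$, $g(a^{-1})=-g(a)$, on obtient $g(a)\leq f(a)$, d'où $f(a)=g(a)$. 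Cette égalité s'écrit $\log m'(a)=\rho\log m(a)$, c'est-à-dire $m'(x)=m(x)^\rho$ pour tout $x\in F^\times$.

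L'obstacle principal est cette dernière étape. L'hypothèse ne fournit directement qu'une implication à sens unique reliant $m$ et~$m'$, et toute la subtilité est de constater qu'elle suffit: en l'encadrant par des inégalités à coefficients rationnels et en l'appliquant simultanément à~$a$ et à~$a^{-1}$, on force la proportionnalité des logarithmes de~$m$ et de~$m'$. Le reste — traduction topologique de la finesse, cas dégénérés, sens réciproque — est de pure routine, pourvu qu'on reste attentif au signe de~$\log m(c)$ tout au long des calculs.
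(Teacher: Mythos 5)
Votre démonstration est correcte. Elle relève du même cercle d'idées que celle du texte --- approcher le rapport des logarithmes par des rationnels et transférer des inégalités d'une valeur absolue à l'autre grâce à la multiplicativité --- mais elle est organisée différemment. Vous commencez par extraire de l'hypothèse l'implication $m'(a)<1\Rightarrow m(a)<1$ (via la convergence de la suite $(a^n)$ vers~$0$ et la continuité de l'identité), puis vous en déduisez l'égalité $f=g$ des logarithmes normalisés par densité des rationnels, la seconde inégalité s'obtenant par la symétrie $a\mapsto a^{-1}$. Le texte, lui, travaille directement avec le voisinage $\{m\leq 1\}$ et un rayon~$r$ pour~$m'$, et encadre $m(x)$ entre $m(a)^{(p-1)/n}$ et $m(a)^{(p+2)/n}$ en considérant les éléments $a\cdot x^n/a^p$ et $a\cdot a^{p+1}/x^n$. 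Votre variante est un peu plus propre: elle évite la gestion du rayon~$r$ et les petits décalages d'exposants (source d'ailleurs de quelques coquilles de signe dans le texte), au prix d'un lemme préliminaire; le traitement des cas triviaux et du sens réciproque est complet, et les deux arguments sont de difficulté comparable.
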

\begin{proof}
Soit $B$ l'ensemble des $x\in F$ tels que $m(x)\leq 1$;
c'est un voisinage de~$0$ pour la topologie définie par~$m$,
donc c'est aussi un voisinage de~$0$ pour la topologie
définie par~$m'$. Il existe donc un nombre réel~$r$
tel que $0<r<1$ et 
tel que pour tout $x\in F$ tel que $m'(x) \leq r$, on ait $m(x)\leq 1$.

Supposons que $0$ soit le seul élément de~$F$
tel que $m'(x)\leq r$; alors la topologie de~$F$ définie
par~$m'$ est la topologie discrète, donc $m'$ est la valeur
absolue triviale de~$F$ (exemple~\ref{exem.va-triviale})
et l'on a $m'(x)=m(x)^0$ pour tout $x\in F$.

Il existe sinon un élément $a\in F$ tel que $0<m'(a)\leq r$;
posons $\rho=\log(m(a))/\log(m'(a))$.
Soit $x$ un élément de~$F$ tel que $m'(x)<1$.
Pour tout entier~$n\geq 1$, il existe un unique
entier~$p\geq 1$ tel que $m'(a)^{p+1} < m'(x)^n \leq m'(a)^p$;
en fait, $p$ est la partie entière de $n \log(m'(x))/\log(m'(a))$.
Comme $m'(x^n/a^p)=m'(x)^n/m'(a)^p\leq 1$, 
on a $m'(a\cdot x^n/a^p)\leq m'(a)\leq r$,
donc $m(a\cdot x^n/a^p)\leq 1$
puis $m(x) \leq m(a)^{(p-1)/n}$. En faisant
tendre~$n$ vers l'infini, on obtient $m(x)\geq m(a)^{\log(m'(x))/\log(m'(a))}
=m'(x)^\rho$.
De même, comme $m'(a^{p+1}/x^n)=m'(a)^{p+1}/m'(x)^n<1$,
on a $m'(a\cdot a^{p+1}/x^n)\leq m'(a)\leq r$,
donc $m(a\cdot a^{p+1}/x^n)\leq 1$
puis $m(x)\geq m(a)^{(p+2)/n}$. En faisant tendre~$n$
vers l'infini, on obtient $m(x)\geq m'(x)^\rho$.
\end{proof}

\begin{exems}
\begin{enumerate}
\item La topologie discrète est plus fine que toute topologie,
et est donnée par la valeur absolue triviale.
Cela correspond au cas $\rho = 0$ dans la proposition.

\item
D'après cette proposition, les valeurs absolues~$m$ sur~$\R$
qui définissent une topologie plus fine que  la
topologie usuelle sont de la forme $m(x)=\abs x^\rho$,
où $\rho$ est un nombre réel~$\geq 0$.
Le cas $\rho=0$ correspond à la topologie discrète.
On laisse aux lecteurices le soin de vérifier 
que l'on a $\rho\leq 1$ et qu'inversement, toute telle valeur
correspond bien à une valeur absolue sur~$\R$.

Le cas du corps des nombres complexes est analogue.

On peut cependant considérer sur~$\R$ ou~$\C$ des valeurs absolues
très exotiques au sens où les topologies définies ne sont pas 
comparables à la topologie classique.
Un exemple fameux est donné par la démonstration
de~\citet{Monsky-1970} du théorème selon lequel dans
toute décomposition d'un carré en un nombre fini
triangles de même aire, le nombre de triangles de la décomposition est pair.
Dans cette preuve, Monsky introduit une valeur absolue $2$-adique sur~$\R$,
pour laquelle $m(2)=1/2$.

%
%

\item
Les topologies $p$-adiques sur~$\Q$
sont deux à deux incomparables, et elles sont incomparables
à la topologie définie par la valeur absolue usuelle.
Elles sont toutes moins fines que la topologie discrète
définie par la valeur absolue triviale.
\end{enumerate}
\end{exems}

\begin{theo}[\citealp{Ostrowski-1916}]
\begin{enumerate}
\item
Soit $m$ une valeur absolue sur~$\Q$. Alors $m$
est la valeur absolue triviale, une puissance de la valeur absolue
usuelle, ou une puissance d'une valeur absolue $p$-adique.
\item
Soit $k$ un corps et soit $m$ une valeur absolue sur~$k(T)$
dont la restriction à~$k$ est la valeur absolue triviale.
Alors $m$ est la valeur absolue triviale, 
une puissance de la valeur absolue associée au degré, 
ou une puissance d'une valeur absolue associée à un polynôme irréductible.
\end{enumerate}
\end{theo}
\begin{proof}
\begin{enumerate}
\item
Soit $a$ un entier naturel tel que $a > 1$.
Soit $x$ un entier $\geq1$; soit $n$ un entier~$\geq 1$ et
soit $p$ le plus petit entier tel que $x^n< a^{p}$
et soit $x^n=x_0+ax_1+\dots+a^{p-1} x_{p-1}$ la décomposition 
en base~$a$ de~$x^n$, où $0\leq x_0,\dots,x_{p-1}\leq a-1$.
On en déduit $m(x^n)\leq \sup(m(1),\dots,m(a-1)) p m(a)^p $,
d'où
\[ m(x) \leq \sup(m(1),\dots,m(a-1))^{1/n} p^{1/n} \sup(1,m(a))^{p/n}. \]
En faisant tendre~$n$ vers l'infini, $p/n$ tend vers~$\log(x)/\log(a)$
et on obtient
l'inégalité $m(x)\leq \sup(1,m(a))^{\log(x)/\log(a)}$, d'où
$m(x)^{1/\log(x)}\leq \sup(1,m(a))^{1/\log(a)}$. 

S'il existe $x\geq 1$ tel que $m(x)>1$, on en déduit que $m(a)>1$
pour tout entier~$a>1$, puis que $m(x)^{1/\log(x)}\leq m(a)^{1/\log(a)}$
pour tous $a,x>1$. En notant~$\rho$ cette valeur constante,
on a donc $m(x)=x^\rho$ pour tout entier~$x>1$, puis
$m(x)=\abs x^\rho$ pour tout $x\in\Q$.

Sinon, on a $m(x)\leq 1$ pour tout entier~$x\geq 2$:
la valeur absolue~$m$ est ultramétrique.

Si $m(n)=1$ pour tout entier non nul~$n$,
alors $m(a)=1$ pour tout nombre rationnel non nul~$a$,
et $m$ est la valeur absolue triviale.

Sinon, soit~$p$ le plus petit entier~$\geq 1$ tel que $m(p)<1$.
On a $p> 1$; si $u$ et $v$ sont des entiers~$>1$
tels que $p=uv$, alors $m(u)=m(v)=1$ par minimalité de~$p$,
donc $m(p)=1$. Ainsi, $p$ est un nombre premier.
Soit $u$ un entier tel que $m(u)<1$ et soit $u=kp+r$
sa division euclidienne par~$p$; si $r\neq 0$, on a $m(r)=1$
par minimalité de~$p$, donc $m(p)\geq m(k)m(p)=m(kp)=m(r-u)=1$,
et cela contredit la définition de~$p$. Ainsi, $r=0$
et $u$ est multiple de~$p$. On en déduit aisément
que $m(a)=m(p)^{v_p(a)}=m_p(a)^{\rho}$, où $\rho=-\log(m(p))/\log(p)$.

\item
Dans ce cas, $m$ est ultramétrique.

Si $m(T)\leq 1$, alors $m(f)\leq 1$ pour tout polynôme $f\in k[T]$.
Soit $I$ l'ensemble des polynômes $f\in k[T]$ tels que $m(f)<1$;
c'est un idéal de~$k[T]$, et c'est même un idéal \emph{premier}.
Si $I=(0)$, alors $m(f)=1$ pour tout polynôme non nul~$f$,
d'où l'on déduit que $m$ est la valeur absolue triviale sur~$k(T)$.
Sinon, il existe un polynôme irréductible~$\pi$ tel que $I=(\pi)$,
et on en déduit que $m$ est une puissance de la valeur absolue~$m_\pi$.

Il reste à traiter le cas où $m(T)>1$.
Dans ce cas, on a $m(aT^n)=m(T)^n$  pour tout $a\in k^\times$,
et l'inégalité ultramétrique entraîne que $m(f)=m(T)^{\deg(f)}$.
En posant $\rho=\log(m(T))$, on obtient $m(f)=m_\infty(T)^\rho$.
\qedhere
\end{enumerate}
\end{proof}
\begin{exem}\label{exem.gauss}
Soit $F$ un corps muni d'une valeur absolue ultramétrique~$m$.
On étend~$m$ à l'anneau $F[T]$ en posant
\[ m(f) = \sup(m(a_n)),  \qquad f = \sum_n a_n T^n. \]
On vérifie directement que l'application ainsi définie vérifie les
axiomes~(\ref{defi.va}/\ref{va-01}) et (\ref{defi.va}/\ref{va-it})
d'une valeur absolue;
elle vérifie même l'inégalité ultramétrique~(\ref{exem.vp}/\ref{va-iu}).

Moins évidente est la propriété~(\ref{defi.va}/\ref{va-mul}),
c'est-à-dire que l'on a $m(fg)=m(f)m(g)$ pour tous~$f,g\in F[T]$.
La propriété est évidente si $f=0$ ou $g=0$; supposons
donc que $f\neq0$ et $g\neq 0$.
Soit $A=F^\circ$ l'anneau de valuation de~$F$,
soit $P$ son idéal maximal~$F^{\circ\circ}$ et soit $k=A/P$
son corps résiduel.
Posons $f=\sum a_n T^n$, $g=\sum b_nT^n$, $h=\sum c_n T^n$;
soit $r$ un entier tel que $m(f)=a_r$ et
soit $s$ un entier tel que $m(g)=b_s$.
on a $f=a_r (f/a_r)$, $g=b_s (g/b_s)$, et 
la définition entraîne 
\begin{align*}
m(fg) & = m(a_r) m(b_s) m((f/a_r)(g/b_s)),  \\
m(f)m(g) & = m(a_r) m(b_s) m(f/a_r) m(g/b_s).
\end{align*}
Pour démontrer l'égalité voulue, on peut ainsi supposer
que $m(f)=m(g)=1$, et il s'agit de démontrer que $m(h)=1$ ; 
l'hypothèse  signifie que $f$ et $g$ appartiennent à~$A[T]$
et que leurs images~$\overline f$ et $\overline g$
dans $k[T]$ sont non nulles.
Comme l'anneau~$k[T]$ est intègre, le produit~$\overline f \overline g$
n'est pas nul; puisque ce produit est l'image du polynôme~$fg\in A[T]$,
on a $m(c_n)\leq 1$ pour tout~$n$,
et il existe un entier~$t$ tel que $m(c_t)=1$.
Ainsi, $m(h)=1$.

Les lecteurices auront peut-être reconnu quelques
cas particuliers de cet énoncé de multiplicativité:
lorsque $F$ est le corps des fractions d'un anneau factoriel~$R$
et que la valeur absolue~$m$ est associée à un élément irréductible,
il est souvent connu comme le « lemme de Gauss »:
il apparaît comme une étape cruciale.
dans la démonstration du théorème de Gauss
que l'anneau~$R[T]$ est factoriel.

Par construction, $m(f)\neq 0$ pour $f\neq 0$;
l'application~$m$ se prolonge alors de manière unique en une valeur
absolue sur le corps~$F(T)$ des fractions rationnelles ---
on l'appelle la \emph{valeur absolue de Gauss}.
\end{exem}

\section{Complétude, locale compacité. Normes}

\subsection{}
Soit $A$ un anneau commutatif et soit $m$ une valeur absolue sur~$A$.
Le séparé--complété~$\widehat A$ de~$A$ pour la topologie
définie par~$m$
est le quotient de l'ensemble des suites de Cauchy dans~$A$ par
la relation d'équivalence pour laquelle $(a_n)\sim (b_n)$
si et seulement si $m(a_n-b_n)\to 0$.
Les suites de Cauchy forment un sous-anneau de
l'ensemble des suites, et la relation d'équivalence en question
est celle associée à l'idéal formé des suites qui tendent vers~$0$.
Ainsi, $\widehat A$ possède une structure d'anneau pour laquelle
l'application canonique $A \to \widehat A$ est un morphisme d'anneau.
Le noyau de ce morphisme est le noyau~$P$ de la valeur absolue~$m$;
il est en particulier injectif lorsque~$A$ est un corps.

Comme l'application~$m\colon A\to\R_+$ 
est uniformément continue (et passe au quotient par l'idéal~$P$),
l'application canonique s'étend par densité en une unique
application continue de~$\widehat A$ dans~$\R_+$; 
c'est encore une valeur absolue. Concrètement
si $\alpha\in\widehat A$ est la classe d'une suite de Cauchy~$(a_n)$,
alors la suite~$m(a_n)$ converge et~$m(\alpha)$ est sa limite.
Les axiomes d'une valeur absolue se vérifient immédiatement.

\begin{lemm}
Soit $F$ un corps et soit $m$ une valeur absolue sur~$F$.
Alors $\widehat F$ est un corps.
\end{lemm}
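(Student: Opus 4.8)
L'idée est de montrer que tout élément non nul de~$\widehat F$ est inversible. Pour cela, je relèverais un tel élément en une suite de Cauchy dans~$F$ dont tous les termes sont non nuls et minorés en valeur absolue, puis je vérifierais que la suite des inverses est encore de Cauchy.

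Concrètement, partons d'un élément $\alpha\in\widehat F$ non nul, représenté par une suite de Cauchy $(a_n)$ dans~$F$. Comme $m$ est uniformément continue, la suite réelle $(m(a_n))$ est de Cauchy dans~$\R$, donc converge vers un réel~$L\geq 0$, et par construction du séparé--complété on a $L=m(\alpha)$. Si $L$ était nul, la suite $(a_n)$ tendrait vers~$0$ et $\alpha$ serait nul: on a donc $L=m(\alpha)>0$. Il existe alors un entier~$N$ tel que $m(a_n)\geq L/2$ pour tout $n\geq N$; en particulier $a_n\neq 0$ pour $n\geq N$. Quitte à remplacer les termes $a_0,\dots,a_{N-1}$ par~$a_N$ — ce qui ne modifie pas la classe de la suite dans~$\widehat F$, puisque la nouvelle suite lui est équivalente —, on peut supposer que $a_n\neq 0$ et $m(a_n)\geq L/2$ pour tout~$n$.

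Je considérerais alors la suite $(a_n^{-1})$. L'identité $a_n^{-1}-a_k^{-1}=(a_k-a_n)/(a_na_k)$ et la multiplicativité de~$m$ donnent
\[ m(a_n^{-1}-a_k^{-1}) = \frac{m(a_n-a_k)}{m(a_n)\,m(a_k)} \leq \frac{4}{L^2}\,m(a_n-a_k), \]
ce qui montre que $(a_n^{-1})$ est de Cauchy puisque $(a_n)$ l'est. Sa classe définit un élément $\beta\in\widehat F$, et le produit $\alpha\beta$ est la classe de la suite constante $(a_na_n^{-1})=(1)$, c'est-à-dire~$1$. Ainsi $\alpha$ est inversible, et $\widehat F$ est un corps. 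Le point qui demande un peu de soin est l'estimation ci-dessus: elle repose de façon essentielle sur la minoration $m(a_n)\geq L/2>0$ valable après modification d'un nombre fini de termes — c'est là qu'interviennent à la fois le fait que $F$ soit un corps (pour former les inverses) et celui que la valeur absolue de~$\alpha$ soit strictement positive.
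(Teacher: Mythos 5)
Votre preuve est correcte et suit essentiellement la même démarche que celle du texte : minorer $m(a_n)$ à partir d'un certain rang grâce à $m(\alpha)>0$, former la suite des inverses et vérifier qu'elle est de Cauchy via l'identité $a_n^{-1}-a_k^{-1}=(a_k-a_n)/(a_na_k)$. La seule différence est cosmétique : le texte pose $b_n=0$ pour les indices où $a_n=0$, tandis que vous remplacez les premiers termes par $a_N$, deux façons équivalentes de traiter le nombre fini de termes éventuellement nuls.
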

\begin{proof}
Soit $\alpha$ un élément non nul de~$\widehat F$ et soit $(a_n)$
une suite de Cauchy qui représente~$\alpha$. 
Comme $(a_n)$ ne tend pas vers~$0$ dans~$F$, la suite $(m(a_n))$ 
ne tend pas vers~$0$ dans~$\R_+$, si bien que $m(\alpha)>0$.
En particulier, il existe un nombre réel~$r>0$
tel que $m(a_n)\geq r$ pour tout entier~$n$ assez grand.
Pour tout entier~$n$, posons $b_n=1/a_n$ si $a_n\neq0$,
et $b_n=0$ sinon. 
Pour $n,p$ assez grands, on a $ b_n-b_p= (a_p-a_n)/a_pa_n$, 
ce qui entraîne que la suite~$(b_n)$ est de Cauchy;
notons $\beta$ sa classe dans~$\widehat F$. 
La suite $(a_nb_n)$ est stationnaire de limite~$1$,
on a donc $\alpha\beta=1$.
\end{proof}

\subsection{}\label{ss.archimedien}
Soit $F$ un corps muni d'une valeur absolue~$m$ qui n'est pas ultramétrique;
on sait que $F$ est de caractéristique zéro (remarque~\ref{rema.car-zero}) et
la restriction de~$m$ à son sous-corps premier~$\Q$ est 
une valeur absolue sur~$\Q$ qui n'est pas ultramétrique.
D'après le théorème d'Ostrowski, il existe
un nombre réel~$\rho>0$ tel que $m(a)=\abs a^\rho$ pour tout $a\in\Q$.
En particulier, la suite $(m(n))$ tend vers l'infini lorsque
l'entier~$n$ tend vers l'infini.
Ou encore, pour tout $x\in F$ tel que $m(x)\neq 0$
et tout nombre réel~$M$,
il existe un entier~$n$ tel que $m(nx)\geq M$.
Cette propriété, reminiscente de la propriété d'Archimède pour
le corps~$\R$ des nombres réels,
conduit à dire que $m$ est une \emph{valeur absolue archimédienne.}

De plus, la structure de $\Q$-algèbre sur le complété~$\widehat F$ de~$F$
se prolonge en une structure de $\R$-algèbre.

Si $\widehat F$ contient un élément~$i$ tel que $i^2=-1$,
un théorème d'Ostrowski affirme que $\widehat F$ est isomorphe à~$\C$,
et que sa valeur absolue  est une puissance de la valeur absolue usuelle.
Dans le cas contraire,
on démontre que la valeur absolue~$m$ de~$\widehat F$
s'étend en une valeur absolue sur le corps~$\widehat F(i)$,
lequel est donc isomorphe à~$\C$ d'après le premier cas.

Cela montre que la théorie des corps munis d'une valeur absolue
n'apporte de nouveauté par rapport à l'analyse classique
que dans le cas ultramétrique.

\subsection{}
Soit $A$ un anneau commutatif et soit $m$ une valeur absolue~$m$
ultramétrique sur~$A$. 
L'inégalité ultramétrique donne lieu à des
des comportements topologiques/géométriques assez différents de 
ceux de l'analyse réelle.

\begin{enumerate}
\item
\emph{Pour tous $a,b\in A$ tel que $m(b)<m(a)$,
on a $m(a+b)=m(a)$.}

Par l'inégalité ultramétrique, on a déjà $m(a+b)\leq \sup(m(a),m(b))
=m(a)$.
En écrivant $a=(a+b)-b$, l'inégalité ultramétrique
fournit également $m(a)\leq \sup(m(a+b),m(-b))$. 
Si $m(a+b)\geq m(-b)$, on obtient l'inégalité $m(a)\leq m(a+b)$;
l'autre cas est absurde puisqu'il entraîne $m(a)\leq m(-b)=m(b)$.

\item
\emph{Si deux boules se rencontrent, celle de plus
petit rayon est contenue dans l'autre. En particulier,
tout point d'une boule en est un centre.}
Soit en effet $a,b\in A$ et $r,s\in\R_+$, 
et soit $c$ un point commun aux boules $B(a,r)$ et $B(b,s)$.
Alors $m(c-a)\leq r$ et $m(c-b)\leq s$; si $r\leq s$,
on a donc $m(a-b)=m((c-b)-(c-a))\leq \sup(r,s)=s$, 
donc $a\in B(b,s)$. Par suite, si $x\in B(a,r)$,
on a $m(x-b)=m((x-a)-(a-b))\leq\sup(m(x-a),m(a-b))\leq s$,
si bien que $B(a,r)\subseteq B(b,s)$.

\item
\emph{Les boules circonférenciées sont à la fois ouvertes et fermées.}
La boule $B(a,r)$ est ouverte, 
car si $x\in B(a,r)$, on a $B(x,r)\subseteq B(a,r)$ (en fait égalité). 
Elle est fermée car si $x\notin B(a,r)$, 
on a $B(x,r)\cap B(a,r)=\emptyset$ (sinon, on aurait
$B(x,r)\subseteq B(a,r)$, contredisant l'hypothèse $x\notin B(a,r)$).

Tout point de~$A$ possède une base de voisinages
ouverts et fermés, à savoir les boules circonférenciées.
Par suite, \emph{la topologie de~$A$ est totalement discontinue.}

\item
\emph{Une suite~$(a_n)$
est de Cauchy si et seulement si la suite des différences
$(a_{n+1}-a_n)$ tend vers~$0$.} En effet, si cette dernière
propriété est satisfaite, l'égalité
\[  a_{n+k}-a_n  = \sum_{i=0}^{k-1} (a_{n+i+1}-a_{n+i}) \]
entraîne 
\[ m(a_{n+k}-a_n)\leq \sup_{0\leq i<k} m(a_{n+i+1}-a_{n+i}), \]
d'où l'on déduit que la suite~$(a_n)$ est de Cauchy.

Plus généralement, un monde ultramétrique est un monde dans lequel
la politique des petits pas ne permet pas d'aller bien loin.
\end{enumerate}

\subsection{}
Soit $F$ un corps muni d'une valeur absolue ultramétrique~$m$.
L'ensemble $A$ des éléments~$a\in F$
tels que $m(a)\leq 1$ est un sous-anneau de~$F$;
par construction, pour tout élément non nul~$a$ de~$F$,
on a $a\in A$ ou $1/a\in A$:
l'anneau~$A$ est ce qu'on appelle
un \emph{anneau de valuation} du corps~$F$.

Un élément $a\in A$ est inversible dans~$A$
si et seulement si $m(a)=1$:
en effet, supposant $a\neq0$,
il faut et il suffit que $1/a$ appartienne à~$A$,
c'est-à-dire $m(1/a)\leq 1$, d'où $m(a)\geq 1$.

Par suite, l'ensemble $M$ des éléments $a\in A$
tels que $m(a)<1$ est un idéal de~$A$
et c'est le seul idéal maximal de l'anneau~$A$.

Ainsi, et malgré la vocation analytique de l'introduction
de la notion de valeur absolue,
les valeurs absolues ultramétriques explorent 
un monde mathématique très proche de l'algèbre.

\subsection{}
La restriction à~$F^\times$ de l'application $x \mapsto -\log(m(x))$
est un morphisme de groupes de $F^\times$ dans~$\R$;
son image~$\Gamma$ est donc un sous-groupe de~$\R$ et
il y a trois possibilités pour l'image de ce morphisme:
\begin{itemize}
\item Elle est triviale: c'est le cas où  la valeur absolue~$m$
est la valeur absolue triviale, et l'anneau~$A$ coïncide avec
le corps~$F$;
\item C'est un sous-groupe isomorphe à~$\Z$;
il existe donc un élément $a\in A$ tel que $-\log(m(a))$
est l'unique générateur strictement positif de~$\Gamma$. Cet 
élément~$a$ est un générateur de l'idéal~$M$, qui est donc principal.
On peut en déduire que $A$ est un anneau principal
(si $I$ est un idéal non nul de~$A$, il existe un plus grand entier~$n$
tel que $-\log(m(x))\geq-n \log(m(a))$ pour tout $x\in A$,
et l'on a $I = \langle a^n\rangle$)
qui n'est pas un corps (car $a$ n'est pas inversible dans~$A$);
\item C'est un sous-groupe dense de~$\R$.
Dans ce cas, l'idéal~$M$ n'est pas principal,
il n'est même pas de type fini\footnote{Dans
un anneau de valuation comme l'anneau~$A$,
tout idéal de type fini est principal…}, de sorte que 
l'anneau~$A$ n'est pas noethérien.
\end{itemize}

On dit la \emph{valuation de~$F$ est discrète} si le groupe~$\Gamma$
est isomorphe à~$\Z$.
La terminologie est traditionnelle, mais malheureuse,
car elle n'a rien à voir avec la notion topologique
(qui correspond au cas de la valeur absolue triviale). 

\begin{prop}
Soit $F$ un corps muni d'une valeur absolue ultramétrique
non triviale.
Les propriétés suivantes sont équivalentes:
\begin{enumerate}
\item La topologie de~$F$ est localement compacte;
\item La topologie de l'anneau de valuation~$A$ est compacte;
\item La topologie de~$F$ est complète,
sa valuation est discrète et son corps résiduel $A/M$ est fini. 
\end{enumerate}
Si elles sont satisfaites, $A$ est homéomorphe à un ensemble de Cantor.
\end{prop}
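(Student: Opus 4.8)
The plan is to prove the equivalences cyclically, say $(2)\Rightarrow(1)\Rightarrow(3)\Rightarrow(2)$, and then to identify $A$ topologically. The implication $(2)\Rightarrow(1)$ is immediate: if $A$ is compact, it is a compact neighbourhood of~$0$, and translating by elements of~$F$ and scaling by elements of~$F^\times$ (both homeomorphisms, by the continuity statements of the first proposition) shows every point of~$F$ has a compact neighbourhood.

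For $(1)\Rightarrow(3)$, suppose $F$ is locally compact. A compact neighbourhood of~$0$ contains some ball $B(0,r)$ with $r>0$; since the valuation is non trivial we may, after multiplying by a suitable element of~$F^\times$, assume $A$ itself is contained in a compact set, hence (being closed, as a ball) that $A$ is compact. Completeness is then clear: $A$ is compact hence complete, and $F=\bigcup_n a^{-n}A$ (or the analogous union in the dense-value-group case) is a countable union of complete subspaces — more directly, any Cauchy sequence in~$F$ is eventually contained in some translate of~$A$, where it converges. For the residue field: $A/M$ is the continuous image of the compact set~$A$ under the quotient map, and it is discrete (the fibres of $A\to A/M$ are the cosets of~$M$, which is open), so $A/M$ is finite. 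Finally, the value group cannot be dense: if it were, one could choose elements $a_n\in A$ with $m(a_n)\to 1^-$ and $m(a_n)$ strictly increasing, and then the $a_n$ would lie in pairwise distinct cosets of every ball $B(0,\rho)$ with $\rho<1$; packaging this, one produces an infinite subset of~$A$ with no accumulation point, contradicting compactness. So the valuation is discrete. I expect this last point — ruling out a dense value group — to be the main obstacle, in the sense that it is the only step where one has to exploit compactness in a genuinely non-formal way rather than just invoking closedness or continuity.

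For $(3)\Rightarrow(2)$: let $\pi$ be a generator of~$M$ (it exists since the valuation is discrete). The quotients $A/M^{n}$ are finite for every~$n$: indeed $M^{k}/M^{k+1}$ is a one-dimensional vector space over the finite field $A/M$, hence finite, and one induces on~$n$. Since $F$ is complete and the $M^n$ form a neighbourhood basis of~$0$ in~$A$, the natural map $A\to\varprojlim_n A/M^n$ is a homeomorphism (surjectivity from completeness, injectivity from separatedness, and it is continuous and closed between the relevant topologies). An inverse limit of finite sets, with the product topology, is compact by Tychonoff; hence $A$ is compact. This gives $(2)$ and closes the cycle.

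It remains to identify~$A$ with a Cantor set. By the Brouwer characterisation, any nonempty compact, metrizable, totally disconnected, perfect space without isolated points is homeomorphic to the Cantor set. We already know $A$ is compact (just proved), metrizable (the topology comes from the distance $m(x-y)$), and totally disconnected (an earlier subsection, from the ultrametric inequality). That $A$ has no isolated point follows from the valuation being non trivial: every ball $B(a,\rho)$ with $\rho>0$ is infinite, since it is a coset of $B(0,\rho)$ and $B(0,\rho)\supseteq M^{n}$ for~$n$ large, while $M^n$ is infinite (it surjects onto the infinitely-many-element inverse system, or simply: $M^n\supsetneq M^{n+1}\supsetneq\cdots$). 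Hence no point of~$A$ is isolated, and Brouwer's theorem applies.
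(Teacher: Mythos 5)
Your argument is correct; it establishes the equivalences by a genuinely different route. You run the cycle as $(2)\Rightarrow(1)\Rightarrow(3)\Rightarrow(2)$ where the paper does $(1)\Rightarrow(2)\Rightarrow(3)\Rightarrow(1)$, and the two steps that carry real content are handled differently. For the discreteness of the valuation, the paper exhibits a generator of~$M$ by noting that $M=B^\circ(0,1)$ is closed in~$A$ (complement of a finite union of open balls) and that the continuous function~$m$ therefore attains its supremum on it; you instead derive a contradiction from a dense value group by producing a uniformly separated infinite subset of the compact set~$A$. For the implication from~(3) to compactness, the paper constructs the explicit homeomorphism $(A/M)^\N\to A$, $u\mapsto\sum s(u_n)a^n$, which it then reuses (it is the digit expansion of the $p$-adic integers); you identify~$A$ with $\varprojlim_n A/M^n$, an inverse limit of finite discrete sets, and invoke Tychonoff --- shorter, and it matches the algebraic description of~$\Z_p$ given in the example that follows the proposition. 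Both texts conclude the Cantor identification by the same classical characterisation (non-empty, compact, metrisable, totally discontinuous, without isolated points), applied to $(A/M)^\N$ in the paper and to~$A$ directly in your version.

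Two small imprecisions, neither fatal. First, your sequence $(a_n)$ with $m(a_n)$ strictly increasing towards~$1$ does not lie in pairwise distinct cosets of \emph{every} ball $B(0,\rho)$ with $\rho<1$: for $\rho$ close to~$1$ the first few terms may well coincide modulo $B(0,\rho)$. But since $m(a_p-a_n)=\sup(m(a_n),m(a_p))\geq m(a_2)$ for $n\neq p$, the set is uniformly separated, which is all you need to contradict the compactness of~$A$. Second, in the identification $A\simeq\varprojlim_n A/M^n$ you cannot call the map ``closed'' before knowing that $A$ is compact; say instead that it is \emph{open}, because it sends the neighbourhood basis $(M^n)$ of~$0$ onto the open kernels of the projections $\varprojlim_k A/M^k\to A/M^n$, so that the bijection is a homeomorphism of topological groups.
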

\begin{proof}
(1)$\Rightarrow$(2). Supposons $F$ localement compact. Il existe alors
un nombre réel~$r>0$ tel que $B(0,r)$ est compact.
La fonction $a\mapsto m(a)$ sur~$B(0,r)$ est continue,
elle est donc majorée et atteint sa borne supérieure;
cela ramène au cas où il existe $a\in A$ tel que $r=m(a)$.
L'application $x\mapsto x/a$ de~$F$ dans lui-même est un homéomorphisme
et l'image de $B(0,r)$ est la boule $B(0,1)$. Ainsi, $A$ est compact.

(2)$\Rightarrow$(3). Supposons maintenant $A$ compact. 
Tout d'abord, l'espace métrique~$F$ est localement compact, donc complet.

Soit $p\colon A\to A/M$ l'application de réduction;
pour $a,b\in A$, les conditions $p(a)=p(b)$ et $m(a-b)<1$ 
sont donc équivalentes.
Pour tout $u\in A/M$, choisissons un élément $s(u)\in A$
tel que $p(s(u))=u$; alors $A$ est la réunion des boules non
circonférenciées $B^\circ(s(u),1)$, pour $u\in A/M$; elles sont ouvertes
et deux à deux disjointes; 
comme $A$ est compact, elles sont en nombre fini, ce qui prouve que $A/M$
est fini. 

La réunion des $B^\circ(s(u),1)$, pour $u\in A/M$ et $u\neq 0$
est une partie ouverte de~$A$; c'est le complémentaire de $B^\circ(s(0),1)=B^\circ(0,1)=M$ qui est donc une partie fermée; par suite, 
la fonction continue $a\mapsto m(a)$ sur $B^\circ(0,1)$ atteint
sa borne supérieure, ce qui prouve qu'il existe $a\in M$
tel que $m(x)\leq m(a)$ pour tout $x\in M$.
Observons que $a\neq0$ (sinon,  $M=0$, donc $A$ est un corps, donc $A=F$
et la valeur absolue de~$F$ est triviale); pour $x\in M$,
on a $m(x/a)\leq 1$, donc $x/a\in M$ et $x\in \langle a\rangle$;
ainsi, l'idéal~$M$ est principal et la valuation de~$F$ est discrète.

(3)$\Rightarrow$(1). Fixons un générateur~$a$ de l'idéal~$M$
et une application $u\mapsto s(u)$ de~$A/M$ dans~$A$
tel que $p(s(u))=u$ pour tout $u\in A/M$.
Pour toute suite $(u_n)$ de $(A/M)^\N$,
la série $\sum s(u_n) a^n$ dans~$A$ converge, car
son terme général tend vers~$0$, puisque $F$ est complet.
L'application
$\phi \colon (A/M)^\N\to A$ telle que $\phi(u)=\sum_n s(u_n) a^n$
est continue, lorsque $(A/M)^\N$ est muni de la topologie
produit des topologies discrètes.
En effet, si les $n$~premiers termes de~$u$ et~$v$ coïncident,
alors $\phi(u)-\phi(v)$ est multiple de~$a^n$, 
donc $m(\phi(u)-\phi(v))\leq m(a)^n$.
Si, de plus, si $u_n\neq v_n$, 
alors $m(\phi(u)-\phi(v))=m(a)^n$, ce qui prouve que $\phi$ est injective.

Démontrons que $\phi$ est surjective.
Soit $x\in A$; posons $x_0=x$ et $u_0=p(x)$; 
comme $x-s(u_0)\in M$, il existe un unique élément $x_1\in A$
tel que $x-s(u_0)=ax_1$. Par récurrence, on construit des suites
$(u_n)$  dans~$A/M$ et $(x_n)$ dans~$A$
telles que 
$x = s(u_0)+a s(u_1)+\dots+a^n s(u_n)+a^{n+1}x_{n+1}$
pour tout entier~$n$. On constate que $x=\phi(u)$, donc $\phi$ est surjective.

L'application $\phi\colon (A/M)^\N\to A$ est bijective, continue;
c'est donc un homéomorphisme car $(A/M)^\N$ est compact et $A$ est séparé.
Lorsque $A/M$ est de cardinal~$2$ (ce qui signifie que $F=\Q_2$
ou $F=\F_2\lpar T\rpar$, 
la caractérisation des points de l'ensemble triadique de Cantor~$K_3$ 
par leur développement triadique exhibe un homéomorphisme
de cet ensemble avec $\{0,1\}^\N$.
En général, l'espace topologique produit~$(A/M)^\N$, produit infini d'un
espace topologique discret fini (avec au moins deux éléments), est 
infini, métrisable, compact, séparable et totalement discontinu:
un résultat classique de topologie générale entraîne 
qu'il est homéomorphe à~$K_3$.
\end{proof}

\begin{remas}
\begin{enumerate}
\item
De la démonstration de~(3)$\Rightarrow$(1), on peut 
extraire l'énoncé: si $F$ est complet, 
sa valuation non triviale et discrète, 
alors l'anneau de valuation~$A$ est homéomorphe
à l'espace « prodiscret » $(A/M)^\N$.

\item
Lorsque la valeur absolue de~$F$ est triviale, 
la topologie de~$F$ est discrète,
donc localement compacte puisque pour tout $a\in F$,
le singleton~$\{a\}$  est un voisinage ouvert compacts  de~$a$.
En revanche, comme l'anneau~$A$ coïncide avec~$F$,
il n'est compact que si $F$ est fini.
\end{enumerate}
\end{remas}

\begin{exem}
Soit $k$ un corps. Munissons l'anneau~$A=k[T]$
de la valeur absolue~$m_T$ pour laquelle $m(T)=1/e$
et $m(a)=1$ pour tout $a\in k^\times$.
Le complété~$\widehat A$ de~$A$ s'identifie naturellement
à l'anneau $k\lbra T\rbra$ des séries formelles en~$T$.
\end{exem}

\begin{exem}
On appelle \emph{corps des nombres $p$-adiques}
le complété~$\Q_p$ du corps des nombres rationnels
pour la valeur absolue $p$-adique~$m_p$ (normalisée par $m_p(p)=1/p$).
Son anneau de valuation est noté~$\Z_p$;
c'est l'ensemble des entiers $p$-adiques.

La construction du séparé--complété 
entraîne que $\Z_p$ est l'adhérence dans~$\Q_p$ de l'anneau~$\Z$,
que son idéal maximal est l'adhérence de l'idéal~$\langle p\rangle$,
et que le corps résiduel de~$\Z_p$ s'identifie au corps fini~$\Z/p\Z$.
La démonstration de la proposition précédente
fournit ainsi un homéomorphisme de~$(\Z/p\Z)^\N$ sur~$\Z_p$.
Cet homéomorphisme dépend du choix d'éléments $s(u)$ de~$\Z_p$
pour $u\in\Z/p\Z$. S'il existe un choix meilleur que d'autres,
multiplicatif 
(voir l'exemple~\ref{exem.teichmuller}),
il est naturel de choisir $s(u)$ dans $\{0,\dots,p-1\}$.
C'est d'ailleurs ce que fait \cite{Hensel-1904b} lorsqu'il introduit
les nombres $p$-adiques, comme des sortes de nombres entiers
donnés par un développement illimité en base $p$.
En revanche, si nous serions tentés d'écrire $ \dots s_n s_{n-1}\dots s_0 $ 
un tel nombre, Hensel l'écrit  $s_0s_1s_2\dots$!
Dans cet article, Hensel ne donne que peu d'indication
sur l'origine de ses idées, mais on peut en deviner une trace
dans son article~\citep{Hensel-1901} où il élabore explicitement
une analogie entre le développement en base~$p$ des nombres entiers
(et même des nombres algébriques)
et le développement en série de puissances d'une fonction algébrique. 

On peut préférer une présentation plus algébrique des nombres $p$-adiques.
Pour tout entier~$n$, l'application de~$\Z$ dans~$\Z_p$
induit un isomorphisme d'anneaux de~$\Z/p^n\Z$ sur~$\Z_p/p^n\Z_p$.
Les inverses de ces isomorphismes fournissent un homomorphisme d'anneaux
\[ \Z_p \to \prod_n \Z/p^n\Z \]
qui est injectif, et dont l'image
(la limite projective de ce système d'anneaux,
notée $\varprojlim_n\Z/p^n\Z$), 
est le sous-anneau de $\prod_n \Z/p^n\Z$ formé des suites $(a_n)$ 
telles que $a_{n+1}\equiv a_n \pmod{p^n}$. De plus, 
cet homomorphisme
induit un homéomorphisme de~$\Z_p$ sur son image
lorsque $\prod_n \Z/p^n\Z$ 
est muni de la topologie produit des topologies discrètes.
\end{exem}

\begin{defi}
Soit $F$ un corps muni d'une valeur absolue~$\abs{\,\cdot\,}$
et soit $V$ un $F$-espace vectoriel.
Une fonction $\norm{\,\cdot\,}\colon V\to \R_+$ est
appelée \emph{norme} si
\begin{enumerate}
\item Pour $v\in V$, on a $\norm v=0$ si et seulement si $v=0$;
\item On a $\norm {av}=\abs a \norm v$ pour tout $a\in F$
et tout $v\in F$;
\item On a $\norm{v+w}\leq \norm v+\norm w$ pour tous $v,w\in V$.
\end{enumerate}
\end{defi}

\subsection{}
Une norme sur~$V$ définit une distance sur~$V$, donc une topologie
pour laquelle l'addition de~$V$ et la multiplication sont continues.

D'après la proposition suivante,
deux normes $\norm\cdot_1$ et $\norm\cdot_2$ 
sur~$V$ définissent la même topologie
sur~$V$ si et seulement si elles sont équivalentes,
c'est-à-dire s'il existe des nombres réels~$c,c'>0$
tels que $c' \norm v_2\leq \norm v_1\leq c \norm v_2$  pour tout $v\in V$.

\begin{prop}
Soit $F$ un corps muni d'une valeur absolue non triviale.
soit $V$ et $ W$ des $F$-espaces vectoriels  normés
et soit $f\colon V\to W$ une application linéaire.
Les conditions suivantes sont équivalentes:
\begin{enumerate}
\item L'application~$f$ est continue;
\item L'application~$f$ est continue en~$0$;
\item Il existe un nombre réel~$c$ tel que
$\norm {f(v)}\leq c \norm v$ pour tout $v\in V$.
\end{enumerate}
\end{prop}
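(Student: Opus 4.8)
Le plan est de démontrer les implications $(1)\Rightarrow(2)\Rightarrow(3)\Rightarrow(1)$. L'implication $(1)\Rightarrow(2)$ est immédiate: une application continue partout l'est en particulier en~$0$. De même, $(3)\Rightarrow(1)$ résultera de ce que la majoration $\norm{f(v)}\leq c\norm v$ entraîne, par linéarité, $\norm{f(v)-f(v')}=\norm{f(v-v')}\leq c\norm{v-v'}$ pour tous $v,v'\in V$, de sorte que $f$ est même lipschitzienne, donc uniformément continue.

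Le cœur de la preuve est donc l'implication $(2)\Rightarrow(3)$, et c'est là que l'hypothèse de non-trivialité de la valeur absolue intervient de façon essentielle. Supposons $f$ continue en~$0$: il existe un réel~$\delta>0$ tel que $\norm v\leq\delta$ entraîne $\norm{f(v)}\leq 1$. Comme la valeur absolue de~$F$ n'est pas triviale, on peut choisir $a\in F^\times$ avec $\abs a>1$ (quitte à remplacer un élément de valeur absolue $<1$ par son inverse). Étant donné $v\in V$ non nul, j'utiliserai que les réels $\abs a^n\norm v$, quand $n$ décrit~$\Z$, prennent des valeurs strictement positives arbitrairement grandes et arbitrairement petites, deux valeurs consécutives étant dans le rapport~$\abs a$: il existe donc un unique entier~$n$ tel que $\delta/\abs a<\abs a^n\norm v\leq\delta$. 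Appliquant l'hypothèse à~$a^n v$, on obtient $\abs a^n\norm{f(v)}=\norm{f(a^n v)}\leq 1$, d'où $\norm{f(v)}\leq\abs a^{-n}$; et l'inégalité $\abs a^n\norm v>\delta/\abs a$ donne $\abs a^{-n}<\abs a\norm v/\delta$. On conclut $\norm{f(v)}\leq(\abs a/\delta)\norm v$, inégalité également vraie pour $v=0$; il suffit donc de poser $c=\abs a/\delta$.

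La principale difficulté est bien ce passage $(2)\Rightarrow(3)$: sans la non-trivialité de la valeur absolue, on ne dispose d'aucun scalaire permettant de « dilater » une petite boule en une boule arbitrairement grande, et l'énoncé tombe en défaut — sur un espace vectoriel de dimension infinie muni d'une valeur absolue triviale, la topologie est discrète, toute application linéaire est continue, mais aucune forme linéaire non nulle n'est bornée. Il faudra donc veiller à n'invoquer l'existence de l'élément~$a$ qu'une fois écarté ce cas trivial, ce que garantit précisément l'hypothèse de la proposition.
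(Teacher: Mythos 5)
Votre démonstration est correcte et suit essentiellement le même chemin que celle du texte : les implications $(1)\Rightarrow(2)$ et $(3)\Rightarrow(1)$ sont traitées de la même façon, et pour $(2)\Rightarrow(3)$ vous utilisez exactement le même argument de dilatation par un scalaire bien choisi, en encadrant $\abs a^n\norm v$ dans un intervalle de longueur multiplicative $\abs a$, ce qui donne la même constante (au choix près de $\abs a>1$ là où le texte prend $\abs a<1$, différence purement cosmétique).
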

\begin{proof}
L'implication (1)$\Rightarrow$(2) est évidente,
et l'hypothèse~(3) entraîne que $f$ est $c$-lipschitzienne, donc continue.

Supposons~(2) et démontrons~(3); c'est là que nous devrons
utiliser l'hypothèse que la valeur absolue de~$F$ n'est pas triviale.

La continuité de~$f$ entraîne qu'il existe
un nombre réel~$r>0$ tel que si $\norm v<r$, alors $\norm{f(v)}\leq 1$.
Puisque la valeur absolue de~$F$ n'est pas triviale,
il existe un élément~$a\in F$ tel que $0<\abs a <1$;
nous allons démontrer que 
$\norm{f(v)}\leq (1/\abs a r) \norm v$.
En effet, soit $v\in V$; l'inégalité est évidente si $v=0$;
supposons donc $v\neq 0$
et soit $n$ le plus petit entier relatif tel que $\norm{ a^n v}<r$.
On a donc $\abs{a}^n \norm{f(v)}=\norm{f(a^n v)}\leq 1$;
d'autre part, la définition de~$n$ entraîne
que $\norm{a^{n-1}v}\geq r$, donc $1\leq (\abs{a}^{n-1}/r) \norm v$.
Par suite, $\norm{f(v)}\leq (1/\abs{a}r)  \norm{v}$,
comme il fallait démontrer.
\end{proof}

\begin{theo}
Soit $F$ un corps muni d'une valeur absolue pour laquelle il est complet.

\begin{enumerate}
\item
Soit $V, V'$ des $F$-espaces vectoriels normés
et soit $f\colon V\to V'$ une application linéaire.
Si $V$ est de dimension finie, alors $f$ est lipschitzienne,
et en particulier continue.

\item
Soit $V$ un $F$-espace vectoriel de dimension finie.
Toutes les normes sur~$V$ sont équivalentes;
la topologie qu'elles induisent sur~$V$ est complète.

\item 
Soit $V$ un $F$-espace vectoriel normé
et soit $W$ un sous-espace vectoriel de~$V$.
Si $W$ est de dimension finie, alors $W$ est fermé.
\end{enumerate}
\end{theo}
\begin{proof}
On raisonne le théorème par récurrence sur la dimension de~$V$.
Soit $(e_1,\dots,e_n)$ une base de~$V$.

(1) Si $f$ n'est pas continue, il existe pour tout~$k\geq 1$
un vecteur non nul $v^{(k)}\in V$ tel que $\norm {v^{(k)}} < k^{-1} \norm{f({v^{(k)})}}$. Écrivons $v^{(k)}=a_1e_1+\dots+a_ne_n$, où $a_1,\dots, a_n\in F$;
par homogénéité, 
on peut supposer que l'une des coordonnées~$a_m$ est égale à~$1$
et toutes les autres sont de valeur absolue~$\leq 1$.
Puis, quitte à considérer une sous-suite et à réordonner la base~$(e_1,\dots,e_n)$, on peut supposer que cette coordonnée est celle d'indice~$n$;
posons $W=\langle e_1,\dots,e_{n-1}\rangle$.
Par hypothèse de récurrence, l'espace~$W$ est fermé dans~$V$.

Observons que $\norm{f(v^{(k)})}\leq \sum_{j=1}^{n}\norm{f(e_j)}$
pour tout~$k$,
si bien que $\norm{v^{(k)}} \to 0$.
Cela entraîne que la suite~$(v^{(k)}-e_n)$ tend vers~$-e_n$;
comme elle est contenue dans~$W$, cela prouve que $-e_n\in W$,
ce qui est absurde.

(2) 
En appliquant~(1) à l'application identique de~$V$
pour deux normes arbitraires sur~$V$,
on obtient une des deux inégalités requises par l'équivalence
des normes, et l'autre s'en déduit par symétrie.

Par ailleurs, le choix de la base~$(e_1,\dots,e_n)$
induit une norme $\norm{\cdot}$ sur~$V$ telle
que $\norm{\sum a_i e_i}=\sup(\abs{a_1},\dots,\abs{a_n})$
pour tout $(a_1,\dots,a_n)\in F^n$.
Il est facile de voir que cette norme rend~$V$ complet.

(3)  La norme de~$V$ définit une norme sur~$W$
qui le rend complet, d'après~(2). Comme un sous-espace complet
d'un espace métrique est fermé, cela entraîne que $W$ est fermé.
\end{proof}

\section{Méthode de Newton}

Soit $F$ un corps muni d'une valeur absolue ultramétrique
et non triviale pour laquelle il est complet. 
On s'intéresse dans ce chapitre aux équations polynomiales
à une ou plusieurs variables à coefficients dans~$F$.

\subsection{}
Isaac Newton, 
dans sa \emph{Méthode des fluxions} (1671, mais seulement publiée en 1736)
puis dans une lettre du 13 juin 1676 à son ami Henry Oldenburg,
diplomate et homme de sciences, 
a introduit la méthode 
pour résoudre les équations polynomiales
qui porte aujourd'hui son nom. Le contexte est la résolution
d'équations dépendant d'un paramètre --- ce qu'il appelle « équations affectées ». Citons cette lettre \citep{Turnbull-1960}:
\begin{quote}
Les extractions de racines affectées, d'équations à plusieurs termes littéraux, ressemblent par la forme à leurs extractions en nombres, 
mais la méthode de Viète et de notre compatriote Oughtred 
est moins adaptée à cet usage. 
C'est pourquoi j'ai été amené à en concevoir une autre…
\end{quote}

Partant d'une solution « approchée » $a$ d'une équation $f(x)=0$,
Newton récrit l'équation  sous la forme $f(a+y)=0$ en l'inconnue~$y$
et la développe sous la forme $g(y)=0$;
si $g(y)=f(a)+f_1(a) y+f_2(a) y^2+\dots $,
il considère que $b=-f(a)/f_1(a)$ est une solution approchée de cette
dernière équation, puis il itère le procédé.

La présentation moderne de cette méthode, telle qu'elle a pris forme aux 18\textsuperscript e et 19\textsuperscript e siècle (avec notamment Lagrange
et Fourier), consiste à 
itérer l'application 
\[ a \mapsto a- \frac{f(a)}{f'(a)}. \]
Par rapport au texte de Newton, elle combine deux modifications importantes,
tirées des travaux de Joseph Raphson et Thomas Simpson.  
Tout d'abord, la suite $(a_n)$ ainsi construite
est vue comme une suite d'approximations d'une solution de l'équation initiale,
alors que Newton considérait plutôt la suite $(a_1,a_2-a_1,\dots)$,
chaque terme étant vu comme solution approchée d'une équation différente.
D'autre part, le dénominateur~$f'(a)$ est interprété comme une différentielle
(une « fluxion » !) et non comme le terme~$f_1(a)$ 
dans le développement $f(a+x)=f(a)+f_1(a)x+\dots$.
Je renvoie aux deux articles de \citet{Kollerstrom-1992} 
et \citet{Christensen-1996}
pour plus de détails sur cette histoire.

Il ne s'agira pas ici de développer le succès de cette méthode
en analyse numérique, en théorie des systèmes dynamiques 
(de la méthode KAM aux plus médiatiques ensembles de Julia et Mandelbrot),
mais de montrer son impact dans le contexte des corps
munis d'une valeur absolue ultramétrique.

On commence par expliquer la méthode de Newton en dimension~1.

\begin{prop}
Soit $\phi \in F[T]$ un polynôme dont les coefficients
appartiennent à l'anneau de valuation~$A$ de~$F$.
Soit $a\in A$ tel que 
$\abs{\phi(a)} < \abs{\phi'(a)}^2$;
la méthode de Newton pour~$\phi$ issue du point~$a$
converge vers un élément $\alpha\in A$
qui est l'unique élément de~$A$
tel que $\phi(\alpha)=0$ et $\abs{\alpha-a}< \abs{\phi'(a)}$;
plus précisément, on a $\abs{\alpha-a}=\abs{\phi(a)/\phi'(a)}$.
De plus, la convergence est \emph{quadratique:} 
on a 
$\abs{a_n-\alpha} \leq \abs{\phi'(a)}\cdot (\abs{\phi(a)/\phi'(a)^2})^{2^n}$ 
pour tout entier~$n\geq0$.
\end{prop}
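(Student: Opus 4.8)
Le plan est de tout ramener au développement de Taylor de~$\phi$ en un point courant~$b$ de~$A$. Écrivons $\phi(b+y)=\phi(b)+\phi'(b)\,y+\sum_{k\geq 2}\phi_k(b)\,y^k$; comme les coefficients de~$\phi$ sont dans~$A$ et que $m\bigl(\binom jk\bigr)\leq 1$, chaque~$\phi_k(b)$ appartient à~$A$, donc $\abs{\phi_k(b)}\leq 1$. Je poserais $c=\abs{\phi(a)/\phi'(a)^2}$, de sorte que $c<1$ par hypothèse (et, en particulier, $\phi'(a)\neq0$).

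L'étape centrale sera un lemme de descente quadratique: si $b\in A$ vérifie $\abs{\phi'(b)}=\abs{\phi'(a)}$ et $\abs{\phi(b)}\leq\abs{\phi'(a)}^2\,c^{2^n}$ pour un entier~$n\geq0$, alors le point $b'=b-\phi(b)/\phi'(b)$ appartient encore à~$A$, vérifie $\abs{\phi'(b')}=\abs{\phi'(a)}$ et $\abs{\phi(b')}\leq\abs{\phi'(a)}^2\,c^{2^{n+1}}$. En effet, en posant $h=b'-b=-\phi(b)/\phi'(b)$, on a $\abs h=\abs{\phi(b)}/\abs{\phi'(a)}\leq\abs{\phi'(a)}\,c^{2^n}<\abs{\phi'(a)}\leq 1$, donc $b'\in A$; le développement de Taylor donne $\phi(b')=\sum_{k\geq 2}\phi_k(b)\,h^k$ puisque les termes de degré~$0$ et~$1$ s'annulent par le choix de~$h$, d'où $\abs{\phi(b')}\leq\abs h^2\leq\abs{\phi'(a)}^2\,c^{2^{n+1}}$ par ultramétricité; de même, $\abs{\phi'(b')-\phi'(b)}\leq\abs h<\abs{\phi'(b)}$, d'où $\abs{\phi'(b')}=\abs{\phi'(b)}$ d'après la règle « si $m(v)<m(u)$, alors $m(u+v)=m(u)$ ».

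J'en déduirais alors par récurrence que la suite de Newton $(a_n)$ issue de $a_0=a$ est bien définie, à valeurs dans~$A$, et vérifie $\abs{\phi'(a_n)}=\abs{\phi'(a)}$, $\abs{\phi(a_n)}\leq\abs{\phi'(a)}^2\,c^{2^n}$ et $\abs{a_{n+1}-a_n}=\abs{\phi(a_n)/\phi'(a_n)}\leq\abs{\phi'(a)}\,c^{2^n}$. Comme $c<1$, les différences $a_{n+1}-a_n$ tendent vers~$0$, donc $(a_n)$ est de Cauchy (critère ultramétrique) et, $F$ étant complet, converge vers un~$\alpha$, qui est dans~$A$ puisque $A$ est fermé dans~$F$. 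La continuité de~$\phi$ et la relation $\abs{\phi(a_n)}\to0$ donnent $\phi(\alpha)=0$. En écrivant $\alpha-a_n=\sum_{k\geq n}(a_{k+1}-a_k)$, l'ultramétricité fournit $\abs{\alpha-a_n}\leq\sup_{k\geq n}\abs{\phi'(a)}\,c^{2^k}=\abs{\phi'(a)}\,c^{2^n}=\abs{\phi'(a)}\,\bigl(\abs{\phi(a)/\phi'(a)^2}\bigr)^{2^n}$, soit l'estimée quadratique annoncée; et, pour $n=0$, comme le premier terme $a_1-a_0=-\phi(a)/\phi'(a)$ domine strictement tous les suivants (car $c^2<c$), on obtient $\abs{\alpha-a}=\abs{\phi(a)/\phi'(a)}<\abs{\phi'(a)}$.

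Resterait l'unicité, que je traiterais en dernier. Soit $\beta\in A$ tel que $\phi(\beta)=0$ et $\abs{\beta-a}<\abs{\phi'(a)}$. Le passage à la limite donne $\abs{\phi'(\alpha)}=\abs{\phi'(a)}$, et l'on a $\abs{\beta-\alpha}\leq\sup(\abs{\beta-a},\abs{\alpha-a})<\abs{\phi'(a)}\leq 1$. Le développement de Taylor de~$\phi$ en~$\alpha$ s'écrit $0=\phi(\beta)=\phi'(\alpha)(\beta-\alpha)+\sum_{k\geq 2}\phi_k(\alpha)(\beta-\alpha)^k$; si $\beta\neq\alpha$, on pourrait diviser par $\abs{\beta-\alpha}\neq0$ et obtenir $\abs{\phi'(a)}=\abs{\phi'(\alpha)}\leq\abs{\beta-\alpha}<\abs{\phi'(a)}$, absurde. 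Le principal obstacle me paraît moins calculatoire que de nature heuristique: il faut deviner qu'il convient de propager \emph{simultanément} le long de la récurrence la valeur absolue \emph{exacte} de~$\phi'(a_n)$ (qui ne bouge pas) et une majoration \emph{quadratique} de~$\abs{\phi(a_n)}$ — une fois ce bon invariant trouvé, tout le reste se déroule mécaniquement.
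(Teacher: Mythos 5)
Votre preuve est correcte et suit pour l'essentiel la même démarche que celle du texte : développement de Taylor dans~$A[T]$, invariance de $\abs{\phi'(a_n)}$ le long de l'itération, décroissance quadratique de $\abs{\phi(a_n)}$, critère de Cauchy ultramétrique, puis unicité par le développement de Taylor en~$\alpha$. Les seules différences (normalisation de la constante~$c$, obtention de l'égalité $\abs{\alpha-a}=\abs{\phi(a)/\phi'(a)}$ par domination du premier terme de la série télescopique plutôt que par la formule de Taylor en~$a$) sont cosmétiques.
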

\begin{proof}
Posons $c=\abs{\phi(a)/\phi'(a)}$; les hypothèses
entraînent donc $c< \abs{\phi'(a)}\leq 1$. 
On écrit alors la formule de Taylor à l'ordre~$1$,
\[ \phi(a+T)=\phi(a)+\phi'(a) T +  T^2 \psi(T), \]
où $\psi \in A[T]$. 
Soit $b=-\phi(a)/\phi'(a)$; on a $\abs b =c < 1$ par hypothèse,
d'où $\abs{\phi(a+b)} = c^2 \abs{\psi(b)}$.
Puisque $\psi$ est à coefficients dans~$A$, 
on a donc $\abs{\psi(b)}\leq 1$,
d'où $\abs{\phi(a+b)} \leq c^2 $.
Par ailleurs, en écrivant 
\[ \phi'(a+T) = \phi'(a) + T \lambda (T), \]
où $\lambda \in A[T]$, 
on constate que 
\[ \abs{\phi'(a+b)-\phi'(a)}\leq \abs{b} = c < \abs{\phi'(a)}, \]
si bien que 
$\abs{\phi'(a+b)}=\abs{\phi'(a)}$.  
Posons $a_1=a+b$. On a donc 
$a_1\in A$,  $\abs{a_1-a}=c$, 
$\abs{\phi'(a_1)}=\abs{\phi'(a)}$, 
et $\abs{\phi(a_1)}\leq c^2$.
En particulier, on a 
\[ \abs{\phi(a_1)/\phi'(a_1)} \leq c^2 / \abs{\phi'(a_1)}
 = c^2 / \abs{\phi'(a)} < c < \abs{\phi'(a)} = \abs{\phi'(a_1)} \]
et 
\[ \abs{\phi(a_1)/\phi'(a_1)^2} \leq c^2 / \abs{\phi'(a)}^2 
	= \abs{\phi(a)/\phi'(a)^2} ^2 . \]

Cela entraîne qu'il existe une unique suite~$(a_n)$
dans~$A$ telle que $a_{0}=a$, 
$\abs{\phi(a_n)}<\abs{\phi'(a_n)}^2$
et $a_{n+1}=a_n-\phi(a_n)/\phi'(a_n)$
pour tout~$n$.
Plus précisément, on a $\abs{\phi'(a_n)}=\abs{\phi'(a)}$,
\[ \abs{a_{n+1}-a_n} \leq \abs{\phi(a_n)}/ \abs{\phi'(a_n)}\leq c , \]
et 
\[ \abs{\phi(a_{n+1})} \leq \abs{\phi(a_n)/\phi'(a)}^2  \]
pour tout~$n\in\N$.

Les inégalités
\[ \abs{\phi(a_{n+1})/\phi'(a_{n+1})^2} \leq (\abs{\phi(a_n)/\phi'(a_n)^2})^2\]
entraînent la majoration
\[ \abs{\phi(a_n)/\phi'(a_n)^2} \leq \abs{\phi(a)/\phi'(a)^2}^{2^n}, \]
pour tout $n\in\N$.
Posons $h=\abs{\phi(a)/\phi'(a)^2}$, de sorte que 
$\abs{a_{n+1}-a_n} \leq \abs{\phi'(a)} h^{2^n}$ pour tout entier~$n$.
En particulier, la suite $(a_{n+1}-a_n)$ tend vers~$0$;
comme $F$ est complet, la suite~$(a_n)$ converge;
sa limite est un élément~$\alpha$ de~$A$ tel
que $\abs{\phi(\alpha)}=0$ et $\abs{\alpha-a} \leq c$. 
Comme $\abs{\phi'(a_n)} =\abs{\phi(a)}$ pour tout~$n$,
on a aussi $\abs{\phi'(\alpha)}=\abs{\phi(a)}$.
Enfin, 
\[ \abs{\alpha-a_n} \leq \sup_{m\geq n} \abs{a_{m+1}-a_m} \leq \abs{\phi'(a)}
 h^{2^n}. \]

Soit $b$ un élément de~$A$ tel que $\abs {b-a}<\abs{\phi'(a)}$ et $\phi(b)=0$.
En écrivant $\phi(b)=\phi(a)+(b-a) \phi'(a)+(b-a)^2 \psi(b-a)$,
il vient $(b-a) (\phi'(a)+(b-a)\psi(b-a))=-\phi(a)$.
On a $\abs{\psi(b-a)}\leq 1$, car $\psi\in A[T]$ et $b-a\in A$;
comme $\abs {b-a}< \abs{\phi'(a)}$, 
on a donc $\abs{\phi'(a)+(b-a)\psi(b-a)}=\phi'(a)$,
d'où l'égalité $\abs {b-a}=\abs{\phi(a)/\phi'(a)}$.

En particulier, $\abs{\alpha-a}=\abs{\phi'(a)}$.

En développant la relation $\phi(\alpha+T)=\phi'(\alpha) T + T^2\theta(T)$,
où $\theta\in A[T]$, on obtient l'égalité
$(b-\alpha)(\phi'(\alpha) + (b-\alpha) \theta(b-\alpha))=0$.
Puisque 
$\abs{b-\alpha} =\abs{(b-a)-(b-a)}<\abs{\phi'(a)}$ et $\abs{\theta(b-\alpha)}\leq 1$, on a $\phi'(\alpha)+(b-\alpha)\theta(b-\alpha)\neq0$.
Par suite, $b-\alpha=0$ et $b=\alpha$.
\end{proof}

\begin{coro}
Soit $\phi\in A[T]$ et soit $a\in A$ tel que $\abs{\phi(a)}<1$
et $\abs{\phi'(a)}=1$. Si $F$ est complet, 
il existe un unique élément
$\alpha\in A$ tel que $\abs{a-\alpha}<1$ et $\phi(\alpha)=0$.
On a de plus $\abs{a-\alpha}=\abs{\phi(\alpha)}$.
\end{coro}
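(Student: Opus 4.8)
The plan is to deduce the corollary directly from the preceding proposition, of which it is simply the special case $\abs{\phi'(a)}=1$. First I would check that the hypotheses line up: since $\phi\in A[T]$ its coefficients lie in $A$, and the assumptions $\abs{\phi(a)}<1$ and $\abs{\phi'(a)}=1$ give
\[ \abs{\phi(a)} < 1 = \abs{\phi'(a)}^2, \]
which is precisely the inequality needed to run Newton's method for $\phi$ from the point $a$.

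Applying the proposition then furnishes an element $\alpha\in A$ — the limit of the Newton iteration issued from $a$ — with $\phi(\alpha)=0$ and $\abs{\alpha-a}<\abs{\phi'(a)}=1$, and this $\alpha$ is the \emph{unique} element of $A$ satisfying these two conditions. Since here $\abs{\phi'(a)}=1$, the condition $\abs{\alpha-a}<\abs{\phi'(a)}$ is literally $\abs{\alpha-a}<1$, so this is exactly the existence and uniqueness statement of the corollary.

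For the last assertion, the proposition gives the sharper equality $\abs{\alpha-a}=\abs{\phi(a)/\phi'(a)}$; as $\abs{\phi'(a)}=1$ this becomes $\abs{a-\alpha}=\abs{\phi(a)}$, which is the claimed value of $\abs{a-\alpha}$ (note that $\phi(\alpha)=0$, so the quantity on the right must be read as $\abs{\phi(a)}$, equal also to $\abs{\phi(a)/\phi'(a)^2}$, which in the proposition governs the quadratic rate of convergence).

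I do not expect any genuine obstacle here: the entire content of the corollary is contained in the proposition, and the only thing to verify is the numerical inequality $\abs{\phi(a)}<\abs{\phi'(a)}^2$, which is immediate from $\abs{\phi'(a)}=1$.
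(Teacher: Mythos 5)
Votre démonstration est correcte et suit exactement la voie attendue : le corollaire est le cas particulier $\abs{\phi'(a)}=1$ de la proposition précédente, et la seule vérification nécessaire est l'inégalité $\abs{\phi(a)}<1=\abs{\phi'(a)}^2$. Vous avez aussi bien repéré que l'égalité finale doit se lire $\abs{a-\alpha}=\abs{\phi(a)}$ (coquille de l'énoncé, puisque $\phi(\alpha)=0$), ce qui découle de l'égalité $\abs{\alpha-a}=\abs{\phi(a)/\phi'(a)}$ de la proposition.
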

Observons que l'hypothèse signifie que lorsqu'on considère
le polynôme $f\in k[T]$ déduit de~$\phi$ par réduction modulo
l'idéal maximal de~$A$,
l'image $\overline a$ de~$a$ est racine de~$f$ mais pas de~$f'$.

\begin{exem}\label{exem.teichmuller}
Prenons par exemple un polynôme de la forme $\phi=T^d-a$,
de sorte que $\phi'=dT^{d-1}$.
Supposons $a \notin M$, de sorte que sa réduction $\overline a$
n'est pas nulle. Soit $c\in k$ tel que $c^d=\overline a$.
D'après la proposition, une condition suffisante 
pour qu'il existe un élément $\alpha\in A$ tel que
tel que $\alpha^d = a$ et $\overline\alpha=c$
est qu'il existe $\alpha\in A$ tel que $\overline\alpha=c$
et $\abs{\alpha^d-a}<\abs d^2$.

\emph a)
Si $d\cdot 1_A\not\in M$, c'est-à-dire si $d$ n'est pas multiple
de la caractéristique de~$k$, on a $\abs d=1$,
donc tout élément $\alpha\in A$ tel que $\overline\alpha=c$
vérifie le critère.

Pour un exemple concret, on peut prendre $\phi = T^2+1$
dans le corps $\Q_p$,  où $p$ est un nombre premier 
tel que $p\equiv 1\pmod 4$: le polynôme~$\phi$
est alors scindé dans~$\Q_p$. 

Observons que le polynôme $T^{p-1}-1$ est scindé sur~$\F_p$. 
Il existe ainsi pour tout élément $m\in\F_p^\times$
un unique élément $\theta_m\in\Z_p$ tel que $\overline{\theta_m}=m$
et $\theta_m^{p-1}=1$.  Ces éléments s'appellent parfois
les représentants de Teichmüller de~$\F_p^\times$.
Ils forment un sous-groupe cyclique d'ordre~$p-1$.

\emph b)
Si $d\cdot 1_A\in M$, il peut ne pas exister d'élément~$\alpha\in A$
tel que $\alpha^d=a$ et $\overline\alpha=c$.
C'est par exemple le cas pour l'équation $T^2+1$ sur~$\Q_2$,
avec $c=1$. En effet, si $\alpha^2+1=0$, on a
$(\alpha-1)^2=\alpha^2-2\alpha+1=-2\alpha$;
alors $v_2(\alpha)=0$ et $2 v_2(\alpha-1)=1$,
ce qui est absurde. 

En revanche, si $\abs{a-1}<\abs d^2$, on peut prendre $\alpha=1$
dans le critère, de sorte que $a$ est une puissance $d$-ième.
\end{exem}

\subsection{}
La méthode de Newton fonctionne également à plusieurs variables.
Munissons $F^n$ de la norme donnée par $\norm{(a_1,\dots,a_n)}=\sup(\abs{a_1},\dots,\abs{a_n})$. 
Soit $\phi_1,\dots,\phi_n\in F[T]$ des polynômes à coefficients dans~$F$
en $n$~indéterminées $T=(T_1,\dots,T_n)$ et soit 
$\Phi=(\phi_1,\dots,\phi_n)\colon F^n\to F^n$ l'application polynomiale
qu'on en déduit. 
Sa différentielle $D\Phi$ est une application
de~$F^n$ dans l'espace des matrices de type~$n\times n$ 
dont les  coefficients sont donnés par les dérivées partielles formelles
$\partial\phi_i/\partial T_j$ des  polynômes~$\phi_i$.
On définit également le jacobien de~$\Phi$
comme l'application polynomiale $J\Phi \colon a \mapsto \det(D\Phi(a))$.

La méthode de Newton pour~$\Phi$ est l'itération de l'application
rationnelle de~$F^n$ dans lui-même donnée par 
\[    a \mapsto \Phi(a) - (D\Phi(a))^{-1} \Phi(a), \]
définie pourvu que $J\Phi(a)\neq0$.

\begin{theo}
Soit $F$ un corps muni d'une valeur absolue ultramétrique
pour laquelle il est complet. 

Soit $\Phi\colon F^n\to F^n$ une application polynomiale
définie par des polynômes à coefficients
dans l'anneau de valuation~$A$ de~$F$;

Soit $a\in A^n$ tel que $\norm{\Phi(a)} <\abs{J\Phi(a)}^2$,
il existe un unique point $\alpha\in F^n$ tel que $\norm{\alpha-a}\leq \norm{\Phi(a)}/\abs{J\Phi(a)}$ et $\Phi(\alpha)=0$.
Il est obtenu par la méthode de Newton issue de~$a$
et vérifie $\norm{a-\alpha}\leq \norm{\Phi(a)} / \abs{J\Phi(a)}$.
\end{theo}
\begin{proof}
La preuve est essentiellement la même qu'en dimension~1;
la formule de Taylor est le seul point qui demande quelques arguments
supplémentaires.
A priori, on peut écrire 
\[ \Phi(a+T) = \Phi(a) + D\Phi(a)\cdot T + \Psi(T), \]
où $\Psi$ est donné par des polynômes à coefficients dans~$A$
et dont chaque monôme est de degré au moins~2.
Par suite, on a  $\norm {\Psi(b)}\leq \norm b^2$ pour tout $b\in F^n$.
De même, la formule de la comatrice entraîne
que $J\Phi(T) \cdot D\Phi(T)^{-1}$ est une matrice  dont les entrées
sont des polynômes à coefficients dans~$A$;
en particulier, 
\[  \norm{D\Phi(a)^{-1}  \cdot b} \leq \abs{J\Phi(a)}^{-1} \norm b \]
pour tout $b\in F^n$.
Le reste de la démonstration est laissé aux lecteurs et lectrices.
\end{proof}

\subsection{}
Avant d'énoncer le corollaire suivant, 
faisons quelques rappels sur le résultant de deux polynômes.
Soit $P, Q\in A[T]$ deux polynômes à coefficients dans un anneau commutatif~$A$
et soit $p,q$ deux entiers naturels tels que $\deg(P)\leq p$ et $\deg(Q)\leq q$.
Le résultant $\Res_{p,q}(P,Q)$ est alors défini comme le déterminant
d'une matrice (« de Sylvester ») de taille $p+q$ consistant
à écrire, sur $q$ lignes, les coefficients de~$P$, décalés
d'une colonne d'une ligne sur la suivante,
puis, sur~$p$ lignes, les coefficients de~$Q$, 
décalés d'une colonne d'une ligne sur la suivante.
De manière plus conceptuelle, et surtout plus éclairante, c'est
le déterminant de l'application linéaire 
\[ A[T]_{<q} \times A[T]_{<p} \to A[T]_{<p+q} , \quad (U,V) \mapsto UP+VQ , \]
où l'espace vectoriel source est muni de la base
$(1,T,\dots,T^{q-1},1,\dots,T^{p-1})$ et l'espace vectoriel but
de la base $(1,T,\dots,T^{p+q-1})$.

Supposons que l'anneau~$A$ soit un corps~$F$.
Alors, le résultant s'annule si et seulement si cette application linéaire  
n'est pas injective, et l'on vérifie que cela se produit si et seulement
si l'une des deux conditions suivantes est vérifiée :
\begin{enumerate}
\item Les polynômes~$P$ et~$Q$ ne sont pas premiers entre eux;
\item On a $\deg(P)<p$ et $\deg(Q)<q$.
\end{enumerate}
Posons en effet $D=\gcd(P,Q)$ et écrivons 
$P=DP_1$ et $Q=DQ_1$; on a la relation $Q_1P-P_1Q=0$
qui prouve que l'application linéaire précédente n'est pas injective
lorsque $\deg(Q_1)<q$ et $\deg(P_1)<q$. Cela se produit
si $\deg(D)>0$ ou si l'on a simultanément $\deg(Q)<q$ et $\deg(Q)<p$.
Inversement, supposons que les polynômes~$P,Q$ sont premiers entre eux
et que l'application linéaire considérée n'est pas injective;
considérons un élément non nul $(U,V)$ de son noyau; la relation $UP+VQ=0$
et la coprimalité de~$P$ et~$Q$ entraîne que $Q$ divise~$U$
et que~$P$ divise~$V$. On en déduit $\deg(Q)<q$ et $\deg(P)<p$.

Par construction, le résultant $\Res_{p,q}(P,Q)$
est une application polynomiale 
en les coefficients de~$P$ et~$Q$; en particulier,
si $\phi\colon A\to B$ est un morphisme d'anneaux commutatifs,
et si $P^\phi$ et~$Q^\phi$ désignent les polynômes de~$B[T]$
obtenus en appliquant~$\phi$ aux coefficients de~$P$ et~$Q$,
on a \[ \Res_{p,q}(P^\phi,Q^\phi)=\phi(\Res_{p,q}(P,Q)). \]
Observons que l'on peut fort bien avoir $\deg(P^\phi)<\deg(P)$
ou $\deg(Q^\phi)<\deg(Q)$, si bien que la généralité
consistant à ne pas fixer~$p=\deg(P)$ et~$q=\deg(Q)$ est ici nécessaire.

\begin{coro}
Soit $\phi \in A[T]$ un polynôme de degré~$n$ et soit 
$\psi,\eta\in A[T]$ deux polynômes 
tels que 
$\deg(\phi)=\deg(\psi)+\deg(\eta)$
et $\norm{\phi -\psi\eta}<\abs{\Res(\psi,\eta)}^2$,
où $\Res(\psi,\eta)$ désigne le résultant des polynômes~$\phi$ et~$\psi$
(relativement à leurs degrés).
Il existe alors un unique couple $(\gamma^*,\eta^*)$
dans~$A[T]$
tels que $\phi=\gamma^*\eta^*$, $\deg(\psi^*-\psi)<\deg(\psi)$,
$\norm{\psi^*-\psi}<\abs{\Res(\psi,\eta)}$,
$\deg(\eta^*-\eta)<\deg(\eta)$ et $\norm{\eta^*-\eta}<\abs{\Res(\psi,\eta)}$.
\end{coro}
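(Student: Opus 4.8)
The plan is to recast the factorization as a square system of polynomial equations and apply the multivariate Newton theorem, the resultant taking over the role of the jacobian. Put $p=\deg(\psi)$ and $q=\deg(\eta)$, so that $p+q=n$ by hypothesis. Looking for $\psi^*=\psi+u$ and $\eta^*=\eta+v$ with $\deg(u)<p$ and $\deg(v)<q$ amounts to looking for a point $(u,v)$ of the $F$-vector space $F[T]_{<p}\times F[T]_{<q}$; I identify it with $F^n$ through coefficients and equip it with the sup-norm, which agrees with $\norm{\,\cdot\,}$. Comparing the coefficients of $T^n$ in the desired identity $\phi=\psi^*\eta^*$ (using $\deg(\psi^*-\psi)<p$ and $\deg(\eta^*-\eta)<q$) shows that the leading coefficient of $\phi$ must equal the product of the leading coefficients of $\psi$ and $\eta$; this relation — automatic when $\phi,\psi,\eta$ are monic — is necessary for the conclusion, and I add it to the hypotheses. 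Granting it,
\[ \Phi(u,v):=(\psi+u)(\eta+v)-\phi=(\psi\eta-\phi)+(\eta u+\psi v+uv) \]
has degree $<n$, so $\Phi$ is a polynomial map $F[T]_{<p}\times F[T]_{<q}\to F[T]_{<n}$, given by polynomials with coefficients in~$A$, and $\Phi(u,v)=0$ if and only if $\phi=(\psi+u)(\eta+v)$.

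Next I would read off the data of the Newton theorem at the origin. One has $\Phi(0,0)=\psi\eta-\phi$, so $\norm{\Phi(0,0)}=\norm{\phi-\psi\eta}$. The differential $D\Phi(0,0)$ is the linear map $(u,v)\mapsto\eta u+\psi v$, that is, up to the harmless reordering of the two source factors, the Sylvester operator attached to $(\psi,\eta)$ relative to the degrees $p,q$ recalled just before the corollary; hence $\abs{J\Phi(0,0)}=\abs{\Res(\psi,\eta)}$. Thus the standing assumption $\norm{\phi-\psi\eta}<\abs{\Res(\psi,\eta)}^2$ is exactly the hypothesis of the multivariate Newton theorem at $a=0$. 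That theorem then gives a unique $(u^*,v^*)\in F^n$ with $\Phi(u^*,v^*)=0$ and $\norm{(u^*,v^*)}\leq\norm{\phi-\psi\eta}/\abs{\Res(\psi,\eta)}<\abs{\Res(\psi,\eta)}\leq 1$; in particular $(u^*,v^*)\in A^n$, so $\psi^*=\psi+u^*$ and $\eta^*=\eta+v^*$ lie in~$A[T]$, satisfy $\phi=\psi^*\eta^*$, and meet all the degree and norm conditions of the statement. This is the existence part.

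For uniqueness the Newton theorem does not quite suffice, since its uniqueness clause only concerns the ball of radius $\norm{\phi-\psi\eta}/\abs{\Res(\psi,\eta)}$, possibly smaller than the ball of radius $\abs{\Res(\psi,\eta)}$ tolerated in the statement; so I would argue directly. Let $(\psi_1,\eta_1)$ and $(\psi_2,\eta_2)$ both satisfy the conclusion, and set $w=\psi_1-\psi_2$, $z=\eta_1-\eta_2$, so that $\deg(w)<p$, $\deg(z)<q$ and $\norm w,\norm z<\abs{\Res(\psi,\eta)}$. Expanding $0=\psi_1\eta_1-\psi_2\eta_2=\psi_1 z+\eta_2 w$ exhibits $(z,w)$ in the kernel of the Sylvester operator of $(\psi_1,\eta_2)$ in degrees $p,q$, whose determinant is $\Res_{p,q}(\psi_1,\eta_2)$. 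Since the relevant differences have norm $<\abs{\Res(\psi,\eta)}\leq 1$, one has $\psi_1\equiv\psi$ and $\eta_2\equiv\eta$ modulo~$M$; writing the resultant as a polynomial with integer coefficients in the coefficients of its two arguments, every monomial of $\Res_{p,q}(\psi_1,\eta_2)-\Res_{p,q}(\psi,\eta)$ involves a coefficient of $w$ or $z$, hence has absolute value $<\abs{\Res(\psi,\eta)}$, and the rule $m(a+b)=m(a)$ for $m(b)<m(a)$ yields $\abs{\Res_{p,q}(\psi_1,\eta_2)}=\abs{\Res(\psi,\eta)}\neq 0$. As $\psi_1$ and $\eta_2$ still have degrees exactly $p$ and $q$, the criterion recalled before the corollary makes that Sylvester operator injective; therefore $w=z=0$, and the pair is unique.

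The one genuinely conceptual point I expect is the identification of $J\Phi(0,0)$ with (the absolute value of) $\Res(\psi,\eta)$: once the factorization map is arranged so that its linearization is the Sylvester operator, the rest is either the black box of the multivariate Newton theorem or a routine ultrametric perturbation estimate for the resultant, and the $T^n$-coefficient bookkeeping is a minor wrinkle that evaporates in the monic case.
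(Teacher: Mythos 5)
Your proof follows the same route as the paper's: the factorization is encoded as a polynomial map whose differential at the starting point is the Sylvester operator of $(\psi,\eta)$, so that its jacobien is $\Res(\psi,\eta)$, and the multivariate Newton theorem does the rest. You are more careful than the paper on two points, and rightly so: (i) for the map to land in the $n$-dimensional target one needs $\deg(\phi-\psi\eta)<n$, i.e. matching leading coefficients — a condition that the paper's identification of the target with the affine space $\{f:\deg(f-\phi)<\deg(\phi)\}$ silently assumes and that the stated hypotheses do not quite force; (ii) the uniqueness clause of the multivariate Newton theorem, as stated, only covers the ball of radius $\norm{\phi-\psi\eta}/\abs{\Res(\psi,\eta)}$, so your direct argument (ultrametric invariance of $\abs{\Res}$ under small perturbations of the arguments, plus injectivity of the Sylvester operator when the resultant is nonzero) is genuinely needed to obtain uniqueness in the larger ball of radius $\abs{\Res(\psi,\eta)}$ claimed by the corollary, whereas the paper simply writes that the assertion follows from the theorem.
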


\begin{proof}
Posons $p=\deg(\psi)$, $q=\deg(\eta)$ et $n=p+q=\deg(\phi)$.
On identifie l'espace affine~$U$ des polynômes~$g$
tels que $\deg(g-\psi)<\deg(\psi)$ à l'espace vectoriel
à l'espace vectoriel~$F^p$,
celui~$V$ des polynômes~$h$ tels que $\deg(h-\eta)<\deg(\eta)$
à l'espace vectoriel~$F^q$,
et celui~$W$ des polynômes~$f$ tels que $\deg(f-\phi)<\deg(\phi)$
à l'espace vectoriel~$F^n$. 
Considérons l'application~$\Phi$ de $U\times V$ dans~$W$
donnée par $(g,h)\mapsto gh$.
Elle est polynomiale.
Sa différentielle en un couple $(g,h)$ est l'application
linéaire $(u,v)\mapsto uh+v\eta$ de $F[T]_{<q}\times F[T]_{<p}$
dans~$F[T]_{<n}$; par suite, son jacobien $J\Phi(g,h)$
en un couple~$(g,h)$ est égal au 
\emph{résultant} $\Res(g,h)$.
L'assertion résulte donc du théorème.
\end{proof}

\begin{coro}
Soit $\phi\in F[T]$ un polynôme irréductible et unitaire tel que $\phi(0)\in A$.
Alors $\phi\in A[T]$.
\end{coro}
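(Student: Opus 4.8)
The plan is to argue by contradiction, using the Gauss absolute value of Example~\ref{exem.gauss} together with the Hensel-type factorization corollary above. Write $\phi = T^n + a_{n-1}T^{n-1} + \dots + a_1T + a_0$, set $a_n = 1$, and let $\norm{\phi} = \sup_i\abs{a_i}$ be its Gauss norm. Since $\phi$ is monic, $\norm{\phi}\geq\abs{a_n}=1$, and the assertion $\phi\in A[T]$ is exactly the inequality $\norm{\phi}\leq 1$. Note that the two extreme coefficients already behave well: $\abs{a_n}=1$ and $\abs{a_0}=\abs{\phi(0)}\leq 1$ by hypothesis. Suppose, for contradiction, that $\norm{\phi}>1$, and let $s$ be the \emph{smallest} index for which $\abs{a_s}=\norm{\phi}$. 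Since neither index $0$ nor index $n$ realizes this maximum, we get $1\leq s\leq n-1$ (in particular $n\geq 2$).

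Next I would normalize: put $\psi = a_s^{-1}\phi = \sum_i b_iT^i$ with $b_i = a_i/a_s$. Then $b_s = 1$; one has $\abs{b_i}<1$ for $i<s$ by minimality of~$s$, and $\abs{b_i}\leq 1$ for $i>s$ since $\abs{a_i}\leq\norm{\phi}=\abs{a_s}$. Hence $\psi\in A[T]$, and $\deg\psi = n$ because the leading coefficient $b_n = 1/a_s$ is nonzero (although $\abs{b_n}<1$). Now apply the factorization corollary to $\psi$ with the trial factorization $\psi\approx\psi_0\eta_0$, taking $\psi_0 = T^s$ of degree~$s$ and $\eta_0 = \sum_{i=s}^{n} b_iT^{i-s} = 1 + b_{s+1}T + \dots + b_nT^{n-s}$ of degree~$n-s$ (constant term $b_s=1$). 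Three points must be checked. First, the formal degrees match: $s+(n-s) = n = \deg\psi$. Second, the error is small: $\psi_0\eta_0 = \sum_{i=s}^{n} b_iT^i$, so $\psi - \psi_0\eta_0 = \sum_{i<s}b_iT^i$ has Gauss norm $\sup_{i<s}\abs{b_i}<1$. Third, the resultant is a unit: reducing modulo the maximal ideal~$M$ and using functoriality of the resultant gives $\overline{\Res(\psi_0,\eta_0)} = \Res_{s,\,n-s}(T^s,\overline{\eta_0})$ in $k[T]$, and since $T^s$ has actual degree equal to its formal degree~$s$, the criterion recalled above shows this resultant can vanish only if $T^s$ and $\overline{\eta_0}$ fail to be coprime, i.e.\ only if $T\mid\overline{\eta_0}$ --- which is impossible since $\overline{\eta_0}(0) = \overline{b_s} = 1\neq 0$. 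Hence $\abs{\Res(\psi_0,\eta_0)}=1$ and $\norm{\psi - \psi_0\eta_0}<1 = \abs{\Res(\psi_0,\eta_0)}^2$, so the corollary produces $\psi^*,\eta^*\in A[T]$ with $\psi = \psi^*\eta^*$, $\deg(\psi^*-T^s)<s$, and $\deg(\eta^*-\eta_0)<n-s$. In particular $\deg\psi^* = s\geq 1$ and $\deg\eta^* = n-s\geq 1$, so $\phi = a_s\psi = (a_s\psi^*)\,\eta^*$ is a product of two polynomials of positive degree in $F[T]$, contradicting the irreducibility of~$\phi$. Therefore $\norm{\phi}\leq 1$, i.e.\ $\phi\in A[T]$.

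The step I expect to require the most care is the choice of the auxiliary pair $(T^s,\eta_0)$ and the observation that its resultant is a unit. The naive form of Hensel's lemma --- lifting a coprime factorization of the reduction $\overline{\psi}$ --- is useless here, because $\overline{\psi}$ may well equal $T^s$, a prime power admitting no nontrivial coprime factorization, and the reduction of $\eta_0$ may likewise collapse to a constant. What rescues the argument is exactly the phenomenon stressed in the discussion of resultants: by keeping $n-s$ as the \emph{formal} degree of $\eta_0$ rather than its (possibly smaller) degree after reduction, one has $\Res_{s,\,n-s}(T^s,\eta_0)\equiv\Res_{s,\,n-s}(T^s,\overline{\eta_0})\not\equiv 0\pmod{M}$, so the resultant stays invertible even when the reduction degenerates. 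Everything else --- the norm estimate, the bookkeeping of degrees, and transporting a factorization of $\psi$ back to one of $\phi$ --- is routine.
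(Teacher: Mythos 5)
Your proof is correct and follows essentially the same route as the paper's: normalize by an extremal coefficient of the Gauss norm, set up a trial factorization against a pure power of $T$, check that the resultant is a unit by exploiting the formal-degree convention, and invoke the Hensel factorization corollary to contradict irreducibility. The only (cosmetic) difference is that you take the \emph{smallest} index realizing the maximum and put the power of $T$ in the low-degree factor, whereas the paper takes the \emph{largest} such index and arranges the factors in mirror image.
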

\begin{proof}
Notons $\phi=\sum_{m=0}^n a_m T^m$. Il s'agit
de prouver que $\abs{a_m}\leq 1$ pour tout entier~$m$;
notons $p$ le plus grand entier 
tel que $\abs{a_p}=\sup(\abs{a_0},\dots,\abs{a_n})$.
Si $p=0$ ou $p=n$, on a $\abs{a_p}\leq 1$ par hypothèse,
d'où $\sup(\abs{a_0},\dots,\abs{a_n})\leq 1$.
Raisonnons par l'absurde; on a donc $0<p<n$.
On écrit 
\[ a_p^{-1}\phi = b_n T^n+\dots + b_{p+1} T^{p+1}+T^p
 + b_{p-1}T^{p-1}+\dots+b_0, \]
de sorte que $\abs{b_0},\dots,\abs{b_{p-1}}\leq 1$
et $\abs{b_{p+1}},\dots,\abs{b_n} <1$.
Posons alors $\psi=b_nT^{n-p}+1$
et $\eta=T^p+b_{p-1}T^{p-1}+\dots+b_0$.
Ce sont des éléments de~$A[T]$; le polynôme 
$ a_p^{-1}\phi - \psi\eta$ est de degré~$<n$ et ses coefficients
sont tous de valeur absolue~$<1$.
De plus, le résultant $\Res(\psi,\eta)$ (relativement à leurs degrés~$n-p$
et~$p$)
est un élément de~$A$
dont l'image dans le corps résiduel~$k$ de~$F$
est le résultant des polynômes $\overline\psi=1+\overline{b_n}T^{n-p}$
et $\overline\eta= T^p$ relativement aux degrés~$n-p$ et~$p$.
Comme ces polynômes n'ont pas de racine commune et comme
$p=\deg(\overline\eta)$,
ce résultant n'est pas nul.  Par suite, $\abs{\Res(\psi,\eta)}=1$.
Le théorème entraîne donc qu'il existe un unique couple $(\psi^*,\eta^*)$
de polynômes tels que $\deg(\psi^*-\psi)<n-p$,
$\deg(\eta^*-\eta)<p$, $\norm{\psi^*-\psi}<1$, $\norm{\eta^*-\eta}<1$
et $a_p^{-1}\phi = \psi^*\eta^*$.
Alors, $\phi = a_p \psi^*\eta^*$, ce qui contredit l'irréductibilité de~$\phi$.
\end{proof}

Donnons tout de suite une application importante.
\begin{theo}
Soit $F$ un corps muni d'une valeur absolue ultramétrique
pour laquelle il est complet
et soit $E$ une extension algébrique de~$F$.
Il existe une unique valeur absolue sur~$E$ qui prolonge celle de~$F$;
elle applique un élément~$a\in E$ de polynôme minimal~$P$
sur $\abs{P(0)}^{1/\deg(P)}$.
\end{theo}
\begin{proof}
Traitons d'abord le cas où la valeur absolue de~$F$ est triviale;
démontrons que la valeur absolue triviale est la seule valeur
absolue sur~$F$ qui prolonge celle de~$E$
Soit $b\in E\setminus F$ 
et soit $a_0,\dots,a_n$ des éléments de~$F$, non tous nuls,
tels que $a_n b^n+\dots+a_0=0$; il s'agit de prouver que $\abs b=1$.
Comme la valeur absolue de~$E$ est nécessairement ultramétrique,
il existe deux entiers~$p<q$ tels que $\abs{a_pb^p}=\abs{a_q b^q}$
et $a_p,a_q\neq 0$.
Alors $\abs{a_p}=\abs{a_q}=1$, puis $\abs{b}^p=\abs{b}^q$,
d'où $\abs b=1$.

Supposons maintenant que la valeur absolue de~$F$ n'est pas triviale.
Comme $E$ est réunion d'extensions finies, il suffit de traiter
le cas où $E$ est une extension finie de~$F$.
Alors, $E$ est un $F$-espace vectoriel de dimension finie.

Soit $m_1$ et~$m_2$ des valeurs absolues sur~$E$ qui prolongent celle de~$F$;
alors $m_1$ et $m_2$ sont des normes sur~$E$.
Comme la dimension de~$E$ est finie,
comme toutes les normes sur~$E$ sont équivalentes et
les valeurs absolues~$m_1$ et~$m_2$ définissent la même topologie sur~$E$. 
Il existe donc un nombre réel~$\rho>0$
tel que $m_1(a)=m_2(b)^\rho$ pour tout $b\in E$.
En considérant un élément $b\in F$ tel que $\abs b\neq 1$,
on obtient $\rho=1$. Cela démontre 
que la valeur absolue de~$F$ se prolonge d'au plus une façon en une
valeur absolue de~$E$.

Pour $a\in E$, considérons $a$ comme un endomorphisme $F$-linéaire de~$E$
et posons $m(a)=\abs{\det(a)}^{1/d}$, où $d=[E:F]$.
On a $m(0)=0$; $m(1)=1$; $m(ab)=m(a)m(b)$ pour $a,b\in E$;
pour $a\in F$, on a $m(a)=\abs{\det(a)}^{1/d}=\abs{a^d}^{1/d}=\abs a$.
Il reste à démontrer que $m(a+b)\leq \sup(m(a),m(b))$ pour tout $a,b\in E$;
pour cela, il suffit de prouver que $m(1+a)\leq 1$ si $a\in E$
et $m(a)\leq 1$.
Soit $\phi\in F[T]$ le polynôme minimal de~$a$; si son degré est~$e$,
le corps~$F$ est de façon naturelle un espace vectoriel de dimension~$d/e$
sur son sous-corps~$F(a)$. En considérant une base de~$F$ sur~$F(a)$,
on voit que le polynôme caractéristique de~$a$  est égal à $\phi^{d/e}$.
Par suite, $\det(a)=\phi^{d/e}(0)$ et  $m(a)=\abs{\phi(0)}^{1/e}$.
Comme le polynôme minimal de~$1+a$ est égal à~$\phi(T-1)$,
on a $m(1+a)=\abs{\phi(-1)}^{1/e}$.
Par hypothèse, $\phi$ est irréductible, unitaire, et $\phi(0)\in A$
puisque $\abs{\phi(0)}=m(a)^e \leq 1$.
D'après le corollaire, $\phi \in A[T]$; alors $\phi(-1)\in A$
et $m(1+a)\leq 1$.

La dernière relation a été établié au cours de la démonstration.
\end{proof}

\subsection{}
Soit $F$ un corps muni d'une valeur absolue~$\abs{\,\cdot\,}$
ultramétrique pour laquelle il est complet.
Supposons que $F$ n'est pas algébriquement clos,
considérons-en une clôture algébrique~$\overline F$ 
et munissons-la de l'unique valeur absolue qui prolonge celle de~$E$. 

Supposons d'abord que $[\overline F:F]$ est fini;
dans ce cas, $\overline F$ est complet. 
Notons que cette situation est plutôt exceptionnelle.
En effet, un théorème d'\cite{ArtinSchreier-1927} affirme que 
cela ne se produit que si $F$ est « réel clos »
($F$ est ordonnable, et les polynômes
vérifient le théorème des valeurs intermédiaires)
auquel cas $[\overline F:F]=2$ et $\overline F=F(\sqrt{-1})$.
En particulier, $F$ est de caractéristique zéro
et les éléments négatifs n'ont pas de racine carrée.
Vérifions aussi que la valeur absolue de~$F$ est triviale.
Comme elle est ultramétrique, le théorème d'Ostrowski entraînerait
sinon que $F$ contient un corps $p$-adique. La contradiction
vient de ce que d'après l'exemple~\ref{exem.teichmuller},
tout entier 
congru à~$1$ modulo~$p$ (modulo~$8$ si $p=2$)
possède une racine carrée dans~$\Q_p$,
donc dans~$F$, même s'il est strictement négatif.

Ainsi, on peut se concentrer sur le cas où~$[\overline F:F]$ est infini.
Dans ce cas, $\overline F$ n'est pas complet.
(La méthode, un peu technique, consiste à construire une série
rapidement convergente 
dont les termes successifs sont de degrés de plus en plus grand,
de sorte que la limite n'est pas algébrique.)
On peut alors
considérer son complété $\widehat{\overline F}$,
et la proposition suivante affirme que c'est un corps algébriquement clos.

\begin{prop}
Si $F$ est un corps algébriquement clos
muni d'une valeur absolue,  alors son complété
$\widehat F$ est encore algébriquement clos.
\end{prop}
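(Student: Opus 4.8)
Le plan est de traiter séparément le cas archimédien, qu'on règle immédiatement, et le cas ultramétrique, qui constitue le cœur de l'énoncé. Si la valeur absolue de~$F$ est archimédienne, alors, $F$ étant algébriquement clos, il contient un élément~$i$ tel que $i^2=-1$; il en va de même de~$\widehat F$, et le théorème d'Ostrowski évoqué plus haut montre que $\widehat F$ est isomorphe à~$\C$, qui est algébriquement clos. On peut donc supposer la valeur absolue ultramétrique, et même non triviale --- sans quoi $\widehat F=F$ et il n'y a rien à démontrer.

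Dans le cas ultramétrique, voici comment je m'y prendrais. Soit $P\in\widehat F[T]$ un polynôme unitaire de degré~$n\geq1$; il s'agit de lui trouver une racine dans~$\widehat F$. Je choisirais un facteur irréductible~$P_1$ de~$P$ dans~$\widehat F[T]$ et poserais $E=\widehat F[T]/(P_1)$: c'est une extension finie de~$\widehat F$, que le théorème de prolongement des valeurs absolues munit d'une valeur absolue étendant celle de~$\widehat F$. Le point décisif est que, $\widehat F$ étant complet, le $\widehat F$-espace vectoriel~$E$, normé par sa valeur absolue, est lui aussi complet et que la droite~$\widehat F$ y est \emph{fermée}, d'après le théorème sur les espaces vectoriels normés de dimension finie sur un corps complet. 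La classe~$\alpha$ de~$T$ dans~$E$ est alors une racine de~$P$; de plus, en écrivant $P=T^n+\sum_{i<n}p_iT^i$ et en posant $R=\max(1,\abs{p_0},\dots,\abs{p_{n-1}})$, l'inégalité ultramétrique impose $\abs\alpha\leq R$: si l'on avait $\abs\alpha>R$, on aurait $\abs{p_i\alpha^i}<\abs{\alpha^n}$ pour tout~$i<n$, d'où $\abs{P(\alpha)}=\abs{\alpha^n}\neq0$, ce qui contredit $P(\alpha)=0$.

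Je montrerais ensuite que $\alpha$ est adhérent à~$F$, ce qui conclut puisque $\widehat F$ est fermé dans~$E$. Comme $F$ est dense dans~$\widehat F$, on choisit des polynômes unitaires $Q_k\in F[T]$ de degré~$n$ tels que $\norm{P-Q_k}\to0$, où $\norm{\,\cdot\,}$ désigne le maximum des valeurs absolues des coefficients; comme $P-Q_k$ est de degré~$<n$, l'inégalité ultramétrique donne $\abs{Q_k(\alpha)}=\abs{(Q_k-P)(\alpha)}\leq\norm{Q_k-P}\cdot R^{n-1}$, quantité qui tend vers~$0$. Puisque $F$ est algébriquement clos, on factorise $Q_k=\prod_{i=1}^n(T-b_{k,i})$ avec les $b_{k,i}$ dans~$F$, et de l'égalité $\prod_i\abs{\alpha-b_{k,i}}=\abs{Q_k(\alpha)}$ on tire l'existence d'un indice~$i_k$ tel que $\abs{\alpha-b_{k,i_k}}\leq\abs{Q_k(\alpha)}^{1/n}$; la suite~$(b_{k,i_k})_k$ converge donc vers~$\alpha$ dans~$E$. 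Ses termes appartenant à~$F\subseteq\widehat F$ et $\widehat F$ étant fermé dans~$E$, on obtient $\alpha\in\widehat F$, et $P(\alpha)=0$. L'obstacle principal n'est pas dans ces estimations, élémentaires en contexte ultramétrique, mais dans la mise en place: disposer de l'extension~$E$, y prolonger la valeur absolue, la savoir complète et savoir que $\widehat F$ y est fermé --- autant de faits déjà acquis dans les sections précédentes.
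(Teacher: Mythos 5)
Votre démonstration est correcte, mais elle suit une route réellement différente de celle du texte. Le texte reste fidèle au fil conducteur de l'article : il part d'un polynôme unitaire irréductible \emph{séparable} $\phi\in\widehat F[T]$, l'approche par un polynôme $\psi\in F[T]$ scindé sur~$F$, et applique la méthode de Newton à partir d'une racine~$a$ de~$\psi$, le point délicat étant la minoration de $\abs{\phi'(a)}$ au moyen du discriminant $\abs{D(\phi)}$ ; cette approche oblige à écarter, ou à traiter à part, le cas inséparable en caractéristique~$p$. Vous procédez au contraire par « continuité des racines » : vous plongez $\widehat F$ dans l'extension finie $E=\widehat F[T]/(P_1)$, munie de l'unique valeur absolue prolongée, complète, et dans laquelle $\widehat F$ est fermé en tant que sous-espace de dimension finie, puis vous montrez que la racine~$\alpha$ est adhérente à~$F$ grâce à la factorisation des approximants~$Q_k$ sur le corps algébriquement clos~$F$ et à l'inégalité $\min_i\abs{\alpha-b_{k,i}}\leq\abs{Q_k(\alpha)}^{1/n}$. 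Vos estimations sont justes (la majoration $\abs\alpha\leq R$, la majoration $\abs{Q_k(\alpha)}\leq\norm{Q_k-P}R^{n-1}$, le passage à la limite dans le fermé~$\widehat F$). Votre argument a l'avantage de ne requérir aucune hypothèse de séparabilité et d'être plus court ; il a l'inconvénient, assumé, de ne pas illustrer la méthode de Newton, et il repose de façon essentielle sur le théorème de prolongement des valeurs absolues aux extensions algébriques d'un corps complet et sur la fermeture des sous-espaces de dimension finie, deux résultats que le texte a effectivement établis auparavant.
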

\begin{proof}
Compte tenu du \S\ref{ss.archimedien}, on se borne
à traiter le cas où la valeur absolue de~$F$ est ultramétrique.
Considérons alors  un polynôme unitaire irréductible
$\phi\in \widehat{F}[T]$; démontrons que son degré~$d$
est égal à~$1$. Pour cela, il suffit de démontrer qu'il possède
une racine, racine que nous allons construire à l'aide de la méthode
de Newton en partant d'une racine d'un polynôme à coefficients dans~$F$
qui est assez proche de~$\phi$.

Nous allons nous contenter de traiter le cas où $\phi$ est
un polynôme \emph{séparable}, c'est-à-dire que 
ses racines de~$\phi$ dans une clôture algébrique de~$\widehat F$ sont simples;
de manière équivalente, les polynômes~$\phi$ et~$\phi'$ sont premiers entre eux.
Cette condition est automatique lorsque $F$ est de caractéristique zéro;
elle ne l'est pas lorsque $F$ est de caractéristique~$p$,
le polynôme~$\phi$ est alors un polynôme en~$T^p$ et
on élimine ce cas sans trop de peine
par des techniques qui n'apportent rien au propos de ce texte.
Le fait que $\phi$ soit séparable se traduit aussi par le fait
que le \emph{discriminant} de~$\phi$,
défini comme le résultant $\Res_{d,d-1}(\phi,\phi')$, n'est pas nul.

Quitte à remplacer~$\phi$ par un polynôme de la forme $\phi(cT)/c^d$,
on peut aussi supposer que tous ses coefficients sont de valeur
absolue au plus~$1$.

Soit $\psi\in F[T]$ un polynôme unitaire de degré~$d$
que nous prendrons assez proche de~$\phi$.
Ses coefficients  sont en particulier de valeur absolue au plus~$1$.
Notons $a_1,\dots,a_d$ les racines de~$\psi$ dans~$F$ ; elles
vérifient $\abs {a_1},\dots,\abs{a_d} \leq 1$.
(Si $\psi=\sum_{n=0}^d c_n T^n$, on a $\abs{c_n a^n}\leq\abs a^n$
pour tout~$n$, et $\abs{c_d a^d}=\abs a^d$ puisque $c_d=1$;
si $a\in F$ vérifie $\abs a >1$, le terme d'indice~$d$ est de plus grande
valeur absolue, à savoir~$\abs a^d$, et c'est le seul, 
si bien que $\abs{\psi(a)}=\abs a^d$; en particulier, $\psi(a)\neq0$.)

 
Soit $a$ l'un quelconque des~$a_j$.
Sous réserve que l'on ait
$\abs{\phi(a)} < \abs{\phi'(a)}^2$,
la méthode de Newton fournit
un élément~$\alpha\in \widehat F$ tel que $\abs{\alpha-a}<1$
et $\abs{\phi(\alpha)}=0$.  Il suffit donc de justifier
que cette inégalité est vérifiée lorsque que $\norm{\phi-\psi}$
est assez petit.

Comme $\abs a\leq 1$ et $\psi(a)=0$, 
on a $\abs{\phi(a)}=\abs{\phi(a)-\psi(a)}\leq \norm{\phi-\psi}$.
Démontrons comment minorer $\abs{\phi'(a)}$.

En écrivant
\[ \abs{\phi'(a)-\psi'(a)}\leq \norm{\phi'-\psi'}\leq\norm{\phi-\psi}, \]
on voit que
$ \abs{\phi'(a)} = \abs{\psi'(a)} $ si $\norm{\phi-\psi}<\abs{\psi'(a)}$;
il suffit donc de minorer cette quantité.
Pour cela, on fait appel aux formules classiques calculant le
le discriminant de~$\psi$ en fonction de ses racines :
\[ D(\psi)=\prod_{1\leq i<j\leq d} (a_j-a_i)^2
 = (-1)^{(d-1)(d-2)/2} \prod_{j=1}^d \psi'(a_j). \]
Comme $\abs{a_j}\leq 1$ pour tout~$j$ et $\norm\psi\leq 1$,
on a $\abs{\psi'(a_j)}\leq 1$ pour tout~$j$, si bien que 
$\abs{D(\psi)}=\prod_{j=1}^d \abs{\psi'(a_j)}\leq \abs{\psi'(a)}$.
Enfin, la définition du discriminant comme déterminant d'une matrice
formée des coefficients de~$\psi$ et~$\psi'$ l'exprime
comme un polynôme à coefficients entiers en les coefficients de~$\psi$.
Grâce à l'inégalité ultramétrique,
on constate donc que $\abs{D(\phi)-D(\psi)}\leq \norm{\phi-\psi}$.
Si de plus $\norm{\phi-\psi}<\abs{D(\phi)}$, il en résulte
que $\abs{D(\psi)}=\abs{D(\phi)}$.

En conclusion, dès que~$\psi$ est choisi de sorte que
$\norm{\phi-\psi}<\abs{D(\phi)}$, on a
l'inégalité $\abs{\phi'(a)}\geq \abs{D(\phi)}$.

Comme on a aussi $\abs{D(\phi)}\leq 1$,
il suffit de prendre $\psi$ tel que
$\norm{\phi-\psi}<\abs{D(\phi)}^2$ pour que la méthode
de Newton issue de n'importe laquelle de ses racines
fournisse une racine de~$\phi$.
\end{proof}

\section{Polytopes et polygones de Newton}
\subsection{}
Soit $\phi = \sum_{m\in\N^n} a_m T^m \in F[T]$ un polynôme 
en $n$~indéterminées $T_1,\dots,T_n$ et à coefficients dans~$F$.
Pour tout $r\in\R_+^n$ et $m\in\N^n$,  on pose  $r^m=r_1^{m_1}\dots r_n^{m_n}$
et on définit 
\[ \norm \phi _r = \sup_{m\in\N^n} \abs{a_m} r^m. \]
La notation à l'aide de multi-indices a pour but de laisser
le lecteur ou la lectrice que ça arrangerait croire qu'il n'y a
qu'une seule indéterminée.

On appelle support de~$\phi$ l'ensemble (fini) $S_\phi$ des $m\in\N^n$
tels que $a_m\neq 0$. 
On appelle \emph{polytope de Newton} de~$\phi$
l'enveloppe convexe $\Pi_\phi$  de~$S_\phi$.
On dit qu'un point $x\in\R^n$ est un sommet de~$\Pi_\phi$ 
s'il existe un hyperplan affine~$H$ de~$\R^n$
tel que $H\cap\Pi_\phi=\{x\}$; si $f$ est une forme affine définissant~$H$,
la restriction de~$f$ à~$\Pi_\phi$ a un signe constant.
On démontre que les sommets de~$\phi$ sont des points de~$S_\phi$
et que $\Pi_\phi$ est l'enveloppe convexe de ses sommets.

Les égalités $S_\phi=\emptyset$,  $\Pi_\phi=\emptyset$ et $\phi=0$
sont équivalentes.

\begin{prop}
L'application $\phi\mapsto \norm\phi_r$  est une valeur
absolue sur l'anneau $F[T]$ telle que $\norm{a\phi}_r=\abs a \norm\phi_r$
pour tout $a\in F$ et tout  $\phi\in F[T]$.
\end{prop}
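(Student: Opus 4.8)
My plan is to dispatch every axiom except the lower bound $\norm{\phi\psi}_r\geq\norm\phi_r\norm\psi_r$ by routine manipulations with the ultrametric inequality on~$F$, and then to prove that last inequality by exhibiting one coefficient of the product $\phi\psi$ at which the contributions cannot cancel — this is where a whiff of the Newton polytope enters.

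First the routine part. One has $\norm 0_r=0$ and $\norm 1_r=\abs1\,r^{0}=1$, which is~(\ref{defi.va}/\ref{va-01}), and $\norm{a\phi}_r=\abs a\norm\phi_r$ follows at once from $\abs{aa_m}=\abs a\abs{a_m}$ (this also settles multiplicativity when $\phi$ or $\psi$ is~$0$). Writing $\phi=\sum a_mT^m$ and $\psi=\sum b_mT^m$: the coefficient of $T^m$ in $\phi+\psi$ is $a_m+b_m$ with $\abs{a_m+b_m}\leq\sup(\abs{a_m},\abs{b_m})$ since $F$ is ultrametric, so after multiplying by $r^m$ and taking the supremum one gets $\norm{\phi+\psi}_r\leq\sup(\norm\phi_r,\norm\psi_r)$, which in particular implies~(\ref{defi.va}/\ref{va-it}). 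Likewise the coefficient of $T^m$ in $\phi\psi$ is $c_m=\sum_{k+l=m}a_kb_l$, whence $\abs{c_m}r^m\leq\sup_{k+l=m}\abs{a_k}r^k\,\abs{b_l}r^l\leq\norm\phi_r\norm\psi_r$, and therefore $\norm{\phi\psi}_r\leq\norm\phi_r\norm\psi_r$.

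For the reverse inequality I would argue as follows. If $\norm\phi_r=0$ or $\norm\psi_r=0$, the bound just proved forces $\norm{\phi\psi}_r=0$ and there is nothing to do; so assume $P:=\norm\phi_r>0$ and $Q:=\norm\psi_r>0$ (in particular $\phi,\psi\neq0$). Since $S_\phi$ and $S_\psi$ are finite the suprema are attained, and I set
\[ S'=\{\,k\in S_\phi:\abs{a_k}r^k=P\,\},\qquad T'=\{\,l\in S_\psi:\abs{b_l}r^l=Q\,\}, \]
two nonempty finite sets. Now pick a linear form $\ell\colon\R^n\to\R$ that is injective on the finite set $S'\cup T'$ — every form outside a finite union of hyperplanes in the dual space is — and let $k_0$ (resp.\ $l_0$) be the element of $S'$ (resp.\ of $T'$) at which $\ell$ is largest. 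Then $k_0+l_0$ has a \emph{unique} expression as $k+l$ with $k\in S'$, $l\in T'$: from $\ell(k)+\ell(l)=\ell(k_0)+\ell(l_0)$ together with $\ell(k)\leq\ell(k_0)$ and $\ell(l)\leq\ell(l_0)$ one gets $\ell(k)=\ell(k_0)$, $\ell(l)=\ell(l_0)$, hence $k=k_0$ and $l=l_0$ by injectivity of $\ell$.

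To conclude, expand $c_{k_0+l_0}=\sum a_kb_l$ over the pairs $(k,l)$ with $k\in S_\phi$, $l\in S_\psi$, $k+l=k_0+l_0$. For such a pair distinct from $(k_0,l_0)$, the uniqueness above rules out having simultaneously $k\in S'$ and $l\in T'$, so $\abs{a_k}r^k<P$ or $\abs{b_l}r^l<Q$ while the other factor is still $\leq P$, resp.\ $\leq Q$; hence $\abs{a_k}r^k\,\abs{b_l}r^l<PQ$, and dividing by $r^{k+l}=r^{k_0+l_0}>0$ this reads $\abs{a_kb_l}<\abs{a_{k_0}b_{l_0}}$. Thus $a_{k_0}b_{l_0}$ strictly dominates every other term, and the sharp ultrametric identity recorded earlier ($m(u+v)=m(u)$ whenever $m(v)<m(u)$, applied repeatedly) yields $\abs{c_{k_0+l_0}}=\abs{a_{k_0}b_{l_0}}$. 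Therefore $\norm{\phi\psi}_r\geq\abs{c_{k_0+l_0}}r^{k_0+l_0}=(\abs{a_{k_0}}r^{k_0})(\abs{b_{l_0}}r^{l_0})=PQ$, which with the easy half gives $\norm{\phi\psi}_r=\norm\phi_r\norm\psi_r$. The one genuinely substantial point is producing a coefficient of the product with a single dominant contribution; the generic linear form does this painlessly, and it is precisely the combinatorial trace of the fact that a Minkowski sum of polytopes has vertices that split uniquely. A less self-contained alternative would treat $r\in(\abs{F^\times})^n$ via the substitution $T_i\mapsto\lambda_iT_i$ with $\abs{\lambda_i}=r_i$, reducing to the several-variable Gauss absolute value of Example~\ref{exem.gauss}, and then let $r$ vary by continuity of $r\mapsto\norm\phi_r$ after enlarging $F$ so that $\abs{F^\times}$ becomes dense — but the argument above avoids any field extension.
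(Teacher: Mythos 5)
Your proof is correct and follows essentially the same route as the paper: the routine upper bounds via the ultrametric inequality, then the lower bound $\norm{\phi\psi}_r\geq\norm\phi_r\norm\psi_r$ by locating an exponent of the Minkowski sum $S_{\phi,r}+S_{\psi,r}$ that decomposes uniquely, so that a single dominant term survives. The only difference is one of exposition: where the paper invokes without proof that a vertex of the convex hull of the sums splits uniquely, your generic linear form $\ell$ supplies an explicit justification of that fact.
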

Le cas où $n=1$ et $r=1$ est l'exemple~\ref{exem.gauss}
de la valeur absolue de Gauss;
la démonstration ci-dessous est plus géométrique.

\begin{proof}
Les égalités $\norm0_r=0$ et $\norm1_r=1$ sont évidentes,
de même que l'égalité $\norm{a\phi}_r=\abs a\norm\phi_r$
pour $a\in F$ et $\phi\in F[T]$,
et l'inégalité ultramétrique $\norm{\phi+\psi}_r\leq \sup(\norm\phi_r
,\norm\psi_r)$ pour $\phi,\psi\in F[T]$.
Démontrons que $\norm\cdot_r$ est multiplicative.

Soit $\phi,\psi\in F[T]$; prouvons $\norm{\phi\psi}_r=\norm\phi_r\norm\psi_r$.
Il suffit de traiter le cas où $\phi,\psi$ sont non nuls.
Posons $\phi=\sum a_m T^m$, $\psi=\sum b_m T^m$ et $\phi\psi=\sum c_mT^m$.
Pour tout $m\in\N^n$, on a 
$ c_m = \sum_{p+q=m} a_p b_q $, de sorte que
\[ \abs{c_m}r^m \leq \sup_{p+q=m} \abs{a_p} \abs{b_q} r^m 
= \sup_{p+q=m} \abs{a_p}r ^p \, \abs{b_q} r^q 
\leq \norm \phi_r \norm\psi_r. \]
Inversement,
notons $S_{\phi,r}$ l'ensemble des $m\in\N^n$
tels que $\abs{a_m} r^m = \norm\phi_r$ et
définissons $S_{\psi,r}$ et $S_{\phi\psi,r}$ de façon analogue.
Soit $m\in\N^n$. S'il n'y a aucun couple  $(p,q)$
tel que $p\in S_{\phi,r}$, $q\in S_{\psi,r}$ et $p+q=m$, 
le calcul précédent montre que $\abs{c_m}r^m < \norm\phi_r\norm\psi_r$.
Par suite, $S_{\phi\psi,r}$ est contenu dans
la somme de Minkowski $S_{\phi,r}+S_{\psi,r}$.
Soit $m$ un sommet de l'enveloppe convexe
des $p+q$, pour $p\in S_{\phi,r}$ et $q\in S_{\psi,r}$;
alors il n'existe qu'un seul couple $(p,q)$ comme ci-dessus,
et le même calcul garantit 
$\abs{c_m}r^m=\abs{a_p}r^p \abs{b_q}r^q =\norm\phi_r \norm\psi_r$. 
Par suite, $\norm{\phi\psi}_r=\norm\phi_r\norm\psi_r$,
comme il fallait démontrer.
\end{proof}

\begin{rema}
Dans le cas particulier où $F$ est muni de la valeur absolue triviale,
la démonstration précédente met en évidence le résultat
géométrique suivant:
\emph{Soit $\phi,\psi$ des éléments non nuls de~$F[T]$.
Alors le polytope de Newton~$\Pi_{\phi\psi}$ de leur produit~$\phi\psi$ 
est la somme de Minkowski $\Pi_\phi+\Pi_\psi$ des polytopes de Newton~$\Pi_\phi$ et $\Pi_\psi$,
c'est-à-dire l'ensemble des $x+y$, pour $x\in\Pi_\phi$ et $y\in\Pi_\psi$.}

On peut en déduire des théorèmes d'irréductibilité
dans l'anneau $F[T]$: si, par exemple, le polytope de Newton~$\Pi_\phi$
n'est pas réduit à un point et n'est pas la somme de Minkowski $P+Q$ 
de deux polytopes non réduits à un point et à sommets entiers,
alors $\phi$ est irréductible dans~$F[T]$. \citet{Gao-2001}
montre le lien avec les critères classiques d'Eisenstein et de 
Dumas et donne quelques exemples concrets où le polytope de Newton
est contenu dans un triangle du plan.
\end{rema}

\subsection{}
Parce que nous avons un peu de mal à considérer que la loi
d'action de~$\R$ sur~$\R_+^*$ donnée par $(t,x)\mapsto x^t$ 
fait de~$\R_+^*$ un espace affine sur~$\R$,
on passe au logarithme 
et on associe à tout polynôme $\phi\in F[T]$ la fonction~$\tau_\phi$ sur~$\R^n$,
parfois appelée le polynôme tropical\footnote{%
La légende affirme que l'adjectif \emph{tropical} a été introduit
pour rendre hommage au mathématicien brésilien Imre Simon.
Il reflète \emph{de facto} une vision du monde eurocentrée.
Son utilisation est ainsi critiquée dans ce contexte mathématique, 
en particulier par des mathématicien·nes d'origine sud-américaine qui,
cependant, continuent à l'utiliser, peut-être  
dans l'attente d'une terminologie moins problématique; nous faisons de même.}
 associé à~$\phi$,
qui est définie par $\tau_\phi(x)=\log(\norm{\phi}_{e^x})$,
Explicitement si $\phi=\sum c_mT^m$, on a 
\[ \tau_\phi (x) =  \sup_{m\in S_\phi} (\log(\abs{c_m})+ m\cdot x) . \]
C'est une fonction convexe continue, affine par morceaux,
en tant que borne supérieure d'une famille finie de fonctions affines.
Si $\tau_\phi$ est affine sur un ouvert non vide~$V$ de~$\R^n$
il existe un unique point $m\in S_\phi$  tel que 
$\tau_\phi$ coïncide avec $x\mapsto \log(\abs {c_m})+m\cdot x$
sur~$V$. 
Sur l'ensemble des $t\in F^n$ tels que $\log(\abs t)\in V$, 
on a alors $\log(\abs{\phi(t)})  = \tau_\phi(\log(\abs t))$;
en particulier, $\phi$ ne s'annule pas sur cet ensemble.

\subsection{}
La transformée de Legendre de~$\tau_\phi$ est définie par
\[ \nu_\phi(p) = \sup_{x} \big( p \cdot x - \tau_\phi(x)\big), \]
où $x$ parcourt~$\R^n$.  Elle est convexe et semi-continue inférieurement, 
en tant que borne supérieure d'une famille de fonctions affines,
mais comme cette famille est infinie (ai-je supposé que l'espace
est de dimension~$>0$ ?…),
elle peut valoir~$+\infty$.
On l'appelle le \emph{polygone de Newton} de~$\phi$. 

Soit $m\in S_\phi$; 
on a donc $\tau_\phi(x)\geq m\cdot x +\log(\abs{c_m})$ pour tout $x\in\R^n$,
de sorte que $m\cdot x - \tau_\phi(x) \leq - \log( \abs{c_m})$,
si bien que $\nu_\phi(m)\leq -\log(\abs{c_m})$; en particulier,
$\nu_\phi(m)$ est finie. Il en résulte que $\nu_\phi$ est finie
et continue sur le polytope de Newton~$\Pi_\phi$ de~$\phi$, 
et  on peut démontrer qu'elle vaut~$+\infty$ en dehors.

En fait, $\nu_\phi$ est la plus grande
fonction $\nu_\phi$ sur~$\Pi_\phi$
qui soit convexe, semi-continue inférieurement,
et vérifie $\nu_\phi(m) \leq - \log (\abs{c_m})$ pour $m\in S_\phi$;
elle est affine par morceaux. 
Son épigraphe dans $\R^n\times \R$
est l'enveloppe convexe supérieure des points 
de la forme $(m,-\log(\abs{c_m}))$, pour $m\in S_\phi$.
En particulier, elle vaut $-\log(\abs{c_m})$ en tout sommet~$m$
de~$\Pi_\phi$.

\subsection{}
La dualité des fonctions convexes a une formulation
symétrique en termes de « sous-différentiels »:
\[ p \in \partial\tau_\phi (x) \quad\Leftrightarrow\quad
    x \in \partial \nu_\phi(p). \]
Comme les fonctions~$\tau_\phi$ et $\nu_\phi$ sont affines par morceaux,
cela fournit une dualité géométrique
entre domaines d'affinité de~$\tau_\phi$ et~$\nu_\phi$
pour lesquels la dimension est complémentaire.
On a vu le lien entre polytopes maximaux 
sur lesquels $\tau_\phi$ est affine et points anguleux de~$\nu_\phi$.
Inversement, il y a correspondance entre
points anguleux de~$\tau_\phi$ 
et polytopes maximaux sur lesquels~$\nu_\phi$ est affine.

\subsection{}
C'est dans une lettre d'Isaac Newton à Henry Oldenburg
qu'apparaît ce polygone, dans le contexte de la résolution
d'une équation en deux variables:
\[ y^6-5xy^5+(x^3/a)y^4-7a^2x^2y^2+6a^3x^3+b^2x^4 = 0 \]
pour laquelle Newton cherche à exprimer~$y$ en termes
d'une série de puissances de~$x$;
il fait à peu près le dessin ci-dessous, 
si ce n'est que nous plaçons les étiquettes et les points 
aux intersections et non dans les cases: les points
marqués sont les couples $(m,n)$ tels que $x^ny^m$ apparaît dans
le polynôme ci-dessus, et le  trait gras est une « règle » qu'on placerait
sous ces points, le plus haut possible.
\begin{figure}[htbp]
\centering
\begin{tikzpicture}
\draw [line width=1pt,fill = blue!20](6,0)--(4,3)--(0,4)--(0,3)--cycle;
\fill (6,0) circle (3pt);
\fill (5,1) circle (3pt);
\fill (4,3) circle (3pt);
\fill (2,2) circle (3pt);
\fill (0,3) circle (3pt);
\fill (0,4) circle (3pt);
\draw [line width=2pt] (0,3)--(6,0);
\foreach \y in {0,1,...,6}{
  \node (y=\y) at (\y,0) [below] {$y^\y$};}
\foreach \x in {0,1,...,4}{
  \node (x=\x) at (0,\x) [left] {$x^\x$};}
\draw (0,0) grid (6,4);
\end{tikzpicture}
\caption{L'exemple de Newton : 
$\phi =  y^6-5xy^5+(x^3/a)y^4-7a^2x^2y^2+6a^3x^3+b^2x^4 = 0 $}
\end{figure}
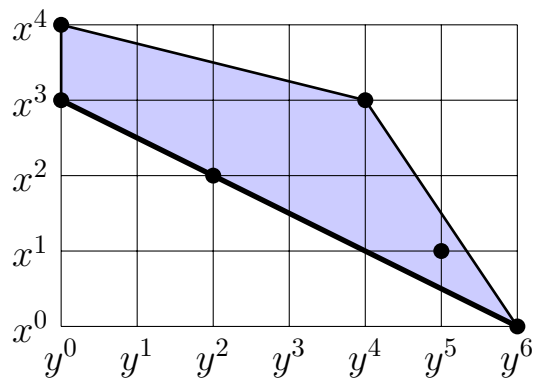
Newton extrait du polynôme les termes pour lesquels
le couple $(m,n)$ est sur le trait gras, ici :
\[ y^6-7a^2x^2+6a^3 x^3 =0 \]
qu'il réduit en $v^6-7v^2+6=0$, en posant $y=v\sqrt{ax}$.
C'est une équation « bicubique » 
dont les solutions sont les racines carrées de~$1, 2$ et $-3$;
Newton écrit alors
\[ y = +\sqrt{ax}, \quad -\sqrt {ax}, \quad +\sqrt{2ax}, \quad - \sqrt{2ax},\]
expressions qu'il voit comme le premier terme de la solution,
car il évite (sans même les mentionner) les solutions $y =\pm\sqrt{-3ax}$.
Pour chacun de ces termes~$p$, il récrit alors l'équation
pour $y-p$, et répète l'argument.

Dans notre langage, il convient de considérer l'anneau
des séries formelles $\C\lbra x\rbra$ à coefficients complexes,
ou plutôt son corps des fractions $F=\C\lpar x\rpar$, 
muni d'une valeur absolue ultramétrique pour laquelle 
$\abs a=1$ pour tout $a\in\C^*$ et $\abs x<1$; 
pour fixer les idées et simplifier les logarithmes ci-dessous,
nous prenons $\abs x=1/e$.
Le polynôme de deux variables devient alors le polynôme d'une
variable
\[  \phi =  T^6-5xT^5+(x^3/a)T^4-7a^2x^2T^2+(6a^3x^3+b^2x^4)  \]
à coefficients dans $F$.
Si on note $\phi=\sum c_m T^m$,
on a donc $\log(\abs {c_0})=-3$, $\log(\abs{c_2})=-2$,
$\log(\abs{c_4})=-3$ et $\log(\abs{c_5})=-1$.
Ainsi, à une symétrie par rapport à l'axe horizontal près,
le polygone de Newton de~$\phi$ est précisément
donné par le dessin que Newton a réalisé.

On devine dans l'exemple de Newton que
les racines~$y$ de~$\phi$,
y compris les deux que Newton n'a pas considérées,
auront pour valeur absolue $e^{-1/2}$.
C'est effectivement le cas: les pentes du polygone
de Newton sont les logarithmes des valeurs absolues des racines.

\begin{theo}\label{theo.newton-pol-pol}
Soit $F$ un corps muni d'une valeur absolue ultramétrique 
et soit $\phi=\sum c_j T^j\in F[T]$ un polynôme 
en une indéterminée tel que $c_0=\phi(0)\neq0$; soit $d$ son degré.

\begin{enumerate}
\item 
Pour toute racine~$a$ de~$\phi$, 
$\log(\abs a)$ est une pente de~$\nu_\phi$.

\item 
Si $\phi$ est scindé,
il existe une numérotation $\{a_1,\dots,a_d\}$
de ses racines telle que pour tout entier $j\in\{1,\dots,d\}$,
la pente de~$\nu_\phi$ sur l'intervalle~$[j-1;j]$
soit égale à $\log(\abs{a_j})$.

\item Si $F$ est complet et $\phi$ est irréductible,
le polygone de Newton de~$\phi$ est affine,
de pente $\log(\abs{c_d}/\abs{c_0})/d$.
\end{enumerate}
\end{theo}
\begin{proof}
Notons $\phi=\sum_{m=0}^d c_m T^m$. Par hypothèse, $c_0\neq 0$
et $c_d\neq0$, de sorte que $\Pi_\phi=[0;d]$.

\begin{enumerate}
\item
Soit $a$ une racine de~$\phi$
et soit $M$ l'ensemble des entiers~$m\in\{0,\dots,d\}$
tels que $\abs{c_m} \abs a ^m $ soit maximal,
c'est-à-dire $\tau_\phi(\log(\abs a))=\log(\abs{c_m})+m\log(\abs a)$.
Puisque $\phi(a)=\sum_{m=0}^d c_m a^m=0$,
l'ensemble~$M$ possède au moins deux éléments.
La fonction $h\colon t \mapsto  t\log(\abs a)-\tau_\phi(\log(\abs a))$
est affine ; on a $h(m)=-\log(\abs{c_m})$ pour $m\in M$,
et $h(m)<-\log(\abs{c_m})$ sinon. 
Puisque $h$ est affine, elle est convexe et la définition de~$\nu_\phi$
entraîne l'inégalité $h\leq \nu_\phi$. 
Puisque $\nu_\phi-h$ est positive et convexe,
l'ensemble de ses zéros est convexe : les fonctions~$h$ et~$\nu_\phi$
coïncident sur l'enveloppe convexe de~$M$.
Cela prouve que $\log(\abs a)$ est une pente de~$\nu_\phi$.

\item
La fonction~$\nu_\phi$ est convexe, affine par morceaux;
ses pentes sont donc en ordre croissant.
Ordonnons les racines de~$\phi$ dans l'ordre de leurs valeurs absolues,
disons $a_1,\dots,a_d$.
Soit $h$ la fonction de~$[0;d]$ dans~$\R$
qui est affine sur chaque intervalle~$[m-1;m]$
et telle que 
\[ h(m) = - \log(\abs{a_{d-m+1}})- \dots - \log(\abs{a_d})
-\log(\abs{c_d}) \]
pour tout $m\in\{1,\dots,d\}$.
Sa pente sur l'intervalle~$[m-1;m]$ est 
$-\log(\abs{a_{d-m}})$;
ses pentes sont donc croissantes, de sorte que $h$~est convexe.

Les coefficients de~$\phi$ s'expriment comme les fonctions symétriques
élémentaires des~$a_i$: pour tout entier~$m$
tel que $0\leq m\leq d$, on a 
\[ c_m=(-1)^{d-m} c_d \sum_{i_1<\dots<i_{d-m}} a_{i_1}\dots a_{i_{d-m}}. \]
Dans cette expression, le terme de plus grande valeur absolue
est obtenu pour $(i_1,\dots,i_{d-m})=(d-m+1,\dots,d)$, 
de sorte que 
\[ \abs{c_{m}}  \leq \abs{c_d} \abs{a_{d-m+1}}\dots \abs{a_{d}} \]
pour tout entier $m\in\{0,\dots,d\}$,
c'est-à-dire $h(m) \leq -\log(\abs{c_m})$.
Puisque la fonction~$h$ est convexe et que
$\nu_\phi$ est la plus grande fonction convexe semi-continue
inférieurement qui vaut au plus $-\log(\abs{c_m})$ en~$m$,  
on a $h(t)\leq \nu_\phi(t)$ pour tout $t\in[0;d]$.

Soit $\Sigma$ l'ensemble des entiers~$m\in\{0,\dots,d\}$
tels que $m=d$ ou $\abs{a_{d-m}}<\abs{a_{d-m+1}}$.
Si $m\in\Sigma$,
il n'y a qu'un seul terme, dans l'expression de~$c_{m}$ donnée ci-dessus,
qui soit de valeur absolue maximale, si bien que l'on a
en fait $h(m)=-\log (\abs{c_m})$.  On a donc $h(m)=\nu_\phi(m)$.

Sur un intervalle~$[m,p]$  dont les extrémités
sont des éléments consécutifs de~$\Sigma$,
la fonction $\psi$ est affine, donc la fonction $\nu_\phi-h$ est 
convexe, semi-continue inférieurement et positive; 
comme elle s'annule à ses extrémités,
elle est constante sur cet intervalle.
Cela prouve que $\nu_\phi = h$ et conclut la démonstration
de~(2).

\item
Supposons maintenant que $F$ soit complet. 
Nous allons donner un argument par la théorie de Galois
en nous restreignant au cas où $\phi$ est séparable
(le cas général s'en déduit en écrivant $\phi=\psi(T^q)$,
où $q$ est une puissance de la caractéristique de~$F$
et $\psi$ est un polynôme séparable).
Soit $E$ une extension algébrique finie de~$F$, galoisienne, dans laquelle le
polynôme~$\phi$ est scindé; munissons-la de l'unique valeur
absolue qui prolonge celle de~$F$. Soit $G$ le groupe
de Galois de l'extension~$E/F$; c'est le groupe (fini)
des automorphismes de corps de~$E$ qui sont l'identité sur~$F$.
Pour tout $\sigma\in G$,
l'application $x\mapsto \abs{\sigma(x)}$ de~$E$ dans~$\R_+$
est une valeur absolue sur~$E$ dont la restriction à~$F$
est la valeur absolue initiale. 
Comme $F$ est complet, la valeur absolue de~$E$
est la seule qui prolonge celle de~$E$,
si bien que $\abs{\sigma(x)}=\abs{x}$ pour tout $x\in E$.

Par ailleurs, le groupe~$G$ stabilise 
l'ensemble $\{a_1,\dots,a_d\}$ des racines de~$\phi$;
comme $\phi$ est irréductible, l'action qui en résulte
est transitive.  Par suite, toutes les racines de~$\phi$
ont même valeur absolue.
En appliquant l'analyse de~(2) au corps~$E$,
il en résulte que le polygone de Newton de~$\phi$ n'a qu'une seule
pente. Comme il vaut~$\log(\abs{c_0})$ en~$0$ et
$\log(\abs{c_d})$ en~$d$, la formule annoncée en résulte.
\qedhere
\end{enumerate}
\end{proof}

\begin{coro}
Soit $F$ un corps, soit $m$ une valeur absolue ultramétrique non triviale
sur~$F$, dont la valuation est discrète. Soit $\pi$ un générateur
de l'idéal maximal de l'anneau de valuation~$A$ de~$F$.
Soit $\phi\in F[T]$ un polynôme unitaire; soit $d$ son degré.
Supposons que son polygone de Newton soit affine,
de pente~$(r/d)\log(\abs\pi)$, où $r$ et~$d$ sont premiers entre eux.
Alors $\phi$ est irréductible.
\end{coro}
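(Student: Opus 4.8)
Le plan est de raisonner par l'absurde: on suppose $\phi$ réductible et on tire une contradiction de la multiplicativité des normes $\norm\cdot_r$ sur $F[T]$ établie plus haut. Comme $\phi$ est unitaire, une factorisation non triviale prend la forme $\phi=\psi\chi$ avec $\psi,\chi\in F[T]$ unitaires de degrés respectifs $e$ et $d-e$, où $1\le e\le d-1$ (on normalise les facteurs, le produit de leurs coefficients dominants valant~$1$). Notons au passage que l'hypothèse force $\phi(0)\ne0$: dans le cas contraire $\nu_\phi$ serait affine sur un intervalle $[\ell;d]$ avec $1\le\ell<d$, de pente $\log(\abs{c_\ell})/(d-\ell)$, et l'égalité de cette pente avec $(r/d)\log(\abs\pi)$ forcerait $r(d-\ell)/d\in\Z$ puisque $\abs{c_\ell}$ est une puissance entière de $\abs\pi$, en contradiction avec $\gcd(r,d)=1$. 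On a donc $\Pi_\phi=[0;d]$ et, comme $\psi(0)\chi(0)=\phi(0)\ne0$, les constantes $\psi(0)$ et $\chi(0)$ sont non nulles.

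Le point-clé est d'évaluer toutes ces normes au réel $\rho=\abs\pi^{r/d}$, c'est-à-dire à l'exponentielle de la pente du polygone. Comme $0$ et $d$ sont des sommets de $\Pi_\phi$, on a $\nu_\phi(0)=-\log(\abs{c_0})$ et $\nu_\phi(d)=-\log(\abs{c_d})=0$; l'affinité de $\nu_\phi$, de pente $(r/d)\log(\abs\pi)$, donne donc $\abs{c_0}=\abs\pi^r=\rho^d$, tandis que la convexité de $\nu_\phi$ en dessous des points $(j,-\log\abs{c_j})$ équivaut à $\abs{c_j}\rho^j\le\rho^d$ pour tout $j$, avec égalité en $j=0$ et $j=d$. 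Autrement dit $\norm\phi_\rho=\rho^d$. La multiplicativité de $\phi\mapsto\norm\phi_r$ (dans le cas d'une seule indéterminée, appliquée au réel $\rho$) fournit alors
\[ \norm\psi_\rho\,\norm\chi_\rho=\norm\phi_\rho=\rho^d . \]
Or $\psi$ étant unitaire de degré~$e$, le terme de plus haut degré montre $\norm\psi_\rho\ge\rho^e$, et de même $\norm\chi_\rho\ge\rho^{d-e}$; comme le produit vaut exactement $\rho^e\cdot\rho^{d-e}$, on en déduit $\norm\psi_\rho=\rho^e$ et $\norm\chi_\rho=\rho^{d-e}$.

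Il reste à examiner les termes constants. De $\norm\psi_\rho=\rho^e$ on tire $\abs{\psi(0)}\le\rho^e=\abs\pi^{re/d}$, et de même $\abs{\chi(0)}\le\abs\pi^{r(d-e)/d}$; puisque $\abs{\psi(0)}\,\abs{\chi(0)}=\abs{c_0}=\abs\pi^r$, ces deux inégalités sont des égalités, d'où $\abs{\psi(0)}=\abs\pi^{re/d}$. Mais $\psi(0)$ est un élément non nul de~$F$ et la valuation de~$F$ est discrète de générateur~$\pi$: il existe donc $n\in\Z$ avec $\abs{\psi(0)}=\abs\pi^n$, et comme $\abs\pi\ne1$ on obtient $re/d=n\in\Z$, soit $d\mid re$, puis $d\mid e$ puisque $\gcd(r,d)=1$ --- ce qui est absurde car $1\le e\le d-1$. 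Donc $\phi$ est irréductible. Le seul point délicat sera la traduction de l'hypothèse d'affinité en l'égalité $\norm\phi_\rho=\rho^d$ --- le fait que le terme constant et le terme dominant réalisent tous deux le supremum précisément au point $\rho$, inverse de la pente --- ainsi que la vérification préliminaire $\phi(0)\ne0$; le reste est formel. De façon équivalente, on pourrait supposer $F$ complet (la réductibilité sur~$F$ entraînant celle sur le complété, où les hypothèses subsistent), scinder $\phi$ dans une extension finie munie de son unique valeur absolue prolongeant celle de~$F$, invoquer le point~(2) du théorème~\ref{theo.newton-pol-pol} pour conclure que toutes ses racines ont valeur absolue~$\rho$, puis calculer $\abs{\psi(0)}=\rho^e$ directement, d'où la même contradiction.
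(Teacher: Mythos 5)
Votre démonstration est correcte, mais elle emprunte une route réellement différente de celle du texte. Vous raisonnez par l'absurde sur une factorisation $\phi=\psi\chi$ en facteurs unitaires de degrés $e$ et $d-e$, et le ressort de l'argument est la multiplicativité de la norme de Gauss $\norm{\,\cdot\,}_\rho$ au rayon $\rho=\abs\pi^{r/d}$ : l'affinité du polygone se traduit par $\norm\phi_\rho=\rho^d$, l'unitarité force alors $\norm\psi_\rho=\rho^e$ et $\norm\chi_\rho=\rho^{d-e}$, puis la saturation des inégalités sur les termes constants donne $\abs{\psi(0)}=\abs\pi^{re/d}$, incompatible avec la discrétude de la valuation lorsque $1\leq e\leq d-1$ et $\gcd(r,d)=1$. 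Le texte procède autrement : il se ramène au cas où $F$ est complet, considère un facteur irréductible unitaire $\psi$ de~$\phi$ et invoque le théorème~\ref{theo.newton-pol-pol} --- les pentes de~$\nu_\psi$ étant des $\log\abs a$ pour $a$ racine de~$\psi$, donc de~$\phi$, elles valent toutes $(r/d)\log\abs\pi$ --- pour obtenir $\log\abs{\psi(0)}=e(r/d)\log\abs\pi$ et conclure par la même divisibilité $d\mid er$. Votre variante a l'avantage de ne requérir ni la complétude de~$F$ ni le prolongement de la valeur absolue à une extension : elle ne repose que sur la proposition de multiplicativité de $\norm{\,\cdot\,}_r$ sur~$F[T]$, valable sur tout corps ultramétrique ; celle du texte est plus courte une fois le théorème acquis. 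Votre seconde esquisse (compléter, scinder, appliquer le point~(2) du théorème) coïncide pour l'essentiel avec l'argument du texte, et votre vérification préliminaire que $\phi(0)\neq0$ règle correctement un point d'interprétation de l'énoncé que le texte passe sous silence.
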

\begin{proof}
On peut supposer que $F$ est complet.
Soit $\psi$ un facteur irréductible unitaire de~$\phi$.
Puisque les pentes de son polygone de Newton sont de la forme~$\log(\abs a)$,
où $a$ est une racine de~$\psi$, donc de~$\phi$, 
le polygone de Newton de~$\psi$ est affine, et sa pente
est $(r/d)\log(\abs\pi)$.
Posons $\psi=T^e+c_{e-1}T^{e-1}+\dots+c_0$;
on a nécessairement $\log(\abs{c_0})= e(r/d)\log(\abs\pi)$.
Nécessairement, $d$ divise~$er$. Comme $r$ et~$d$ sont premiers entre eux,
$d$ divise~$e$, d'où $d=e$. Ainsi, $\psi=\phi$.
\end{proof}

\begin{exem}
Un cas particulier du corollaire est le \emph{critère d'Eisenstein}
qui correspond au cas d'un polynôme 
$\phi=T^d+c_{d-1}T^{d-1}+\dots+c_0$ tel que
$c_0,\dots,c_{d-1}$ appartiennent à l'idéal maximal~$M$ de l'anneau
de valuation de~$F$,
et tel que $c_0$  n'appartienne pas à~$M^2$.
Avec les notations du corollaire, le polygone de Newton de~$\phi$
est affine, à pente $(1/d)\log(\abs\pi)$; le polynôme~$\phi$
est donc irréductible.

Pour un exemple spécifique et très classique, 
soit $p$ un nombre premier impair et posons
\[ \phi = \frac{(T+1)^{p}-1}{T}=T^{p-1} + \binom p{p-1} T^{p-2}+\dots + \binom p2T+\binom p1. \]
Ses racines complexes sont de la forme $-1+\omega$, où $\omega^p=1$ et $\omega\neq1$; autrement dit, 
il est égal à $\Phi_p(T+1)$, où $\Phi_p$ est le polynôme cyclotomique d'indice~$p$.
Le coefficient dominant de~$\phi$
vaut~$1$, et comme $p$ est premier, 
tous les autres coefficients sont multiples de~$p$;
le coefficient constant, égal à~$p$, n'est pas multiple de~$p^2$.
En appliquant le critère d'Eisenstein au corps~$\Q$,
muni de la valeur absolue $p$-adique, on en déduit que $\phi$ est irréductible.
En fait, il est même irréductible comme élément de~$\Q_p[T]$. 
\end{exem}

\begin{exem}\label{exem.schur}
Le critère d'Eisenstein montre comment déduire du polygone de Newton
un critère d'irréductibilité « local ». \citet{Coleman-1987} a expliqué
comment combiner plusieurs valeurs absolues pour en déduire 
des résultats d'irréductibilité « globaux ».  Il donne en particulier
l'exemple du polynôme 
\[ \phi = 1+ T + \dfrac12T^2+\dots + \dfrac1{n!}T^n \]
à coefficients dans~$\Q$.

Fixons un nombre premier~$p$, munissons~$\Q$ de la valeur absolue $p$-adique
et calculons son polygone de Newton.
C'est l'enveloppe convexe supérieure de l'ensemble des points
de la forme $(m, -\log(\abs{1/m!}_p))=(m,-v_p(m!)\log(p))$,
pour $m\in\{0,\dots,n\}$.

Écrivons le développement en base~$p$ de~$n$:
\[ n = a_mp^m+a_{m-1}p^{m-1}+\dots+a_0, \]
où $a_0,\dots,a_m\in\{0,\dots,p-1\}$.
Soit $(m_1,\dots,m_s)$ la suite strictement décroissante
des entiers~$k\in\{0,\dots,m\}$ tel que $a_k \neq0$;
pour tout~$i$, posons $n_i = a_{m_1}p^{m_1}+\dots+a_{m_i}p^{m_i}$.
Démontrons que les sommets de~$\tau_\phi$ sont les
points de la forme $(n_i, -v_p(n_i!)\log(p))$,
pour $i\in\{1,\dots,s\}$.

Rappelons d'abord comment évaluer la valuation $p$-adique de~$n!$:
parmi tous les entiers entre~$1$ et~$n$, $\lfloor n/p\rfloor$ sont multiples de~$p$, $\lfloor n/p^2\rfloor$ sont multiples de~$p^2$, etc., de sorte que
\[ v_p(n!) = \sum_{k=1}^\infty \lfloor n/p^k \rfloor. \]
Alors, $\lfloor n/p^k\rfloor=a_m p^{m-k}+\dots+a_k$ pour $k\leq m$, 
et est nul pour $k>m$,
si bien que
\[ v_p(n!) = \sum_{k=1}^m \sum_{j=k}^m a_j p^{j-k}
 = \sum_{j=1}^m a_j \sum_{k=1}^j p^{j-k}
= \sum_{j=1}^m a_j \frac{p^j-1}{p-1}
 = \frac {n - (a_0+\dots+a_m)}{p-1}. \]
En notant $\sigma(n)=a_0+\dots+a_m$;
on a donc $v_p(n!)=(n-\sigma(n))/(p-1)$.

Par définition, $\nu_\phi$ est la plus grande fonction convexe
semi-continue inférieurement
telle que $\nu_\phi(m)\leq -v_p(m!)\log(p)$ pour tout $m\in\{0,\dots,n\}$.
Par suite, $h\colon t\mapsto \nu_\phi(t)(p-1)/\log(p)+t$ 
est la plus grande fonction convexe semi-continue inférieurement
telle que $h(m)\leq \sigma(m)$
pour tout $m\in\{0,\dots,n\}$.

Comme la fonction nulle vérifie ces inégalités, on a $h(t)\geq 0$
pour tout~$t$.
Par suite, $h(0)=0$.
Si $(a-1)p^{m_1}<m\leq ap^{m_1}$, on a $\sigma(m)\geq a$,
et $\sigma(m)\geq a+1$ si $a_1p^{m_1}<m\leq n$.
Par suite, la fonction donnée par $t\mapsto t/a_1p^{m_1}$
vérifie les inégalités voulues; cela prouve
l'inégalité $t/p^{m_1}\leq h(t)$ pour tout $t\in[0;n]$.
Comme $h(a_1p^{m_1})\leq \sigma(a_1p^{m_1})=a_1$,
on a donc $h(a_1p^{m_1})=a_1$ et $h(t)=t/p^{m_1}$
sur l'intervalle $[0;a_1p^{m_1}]$.

On raisonne maintenant de même sur l'intervalle
$[a_1p^{m_1},n]$.
Pour tout entier~$m$ dans cet intervalle,
on a $\sigma(m)=1+\sigma(m-a_1p^{m_1})$.
Posant $h_1(t)=h(t+a_1p^{m_1})-a_1$,
on constate que~$h_1$ est à l'entier~$n-a_1p^{m_1}$
ce que~$h$ est à l'entier~$n$.
Cela prouve que $h$ est la fonction affine par morceaux
de pente $1/p^{m_1}$ sur l'intervalle $[0;n_1]$, 
de pente $1/p^{m_2}$ sur l'intervalle $[n_1;n_2]$,
\dots, de pente $1/p^{m_s}$ 
sur l'intervalle $[n_{s-1};n]$. Les sommets de~$h$
sont donc les entiers~$n_i$, et ceux de~$\nu_\phi$ aussi.

Les pentes de~$\nu_\phi$ sont donc les nombres réels
\[ \left(\dfrac 1{p^{m_i} (p-1)}-\dfrac1{p-1} \right)\log(p)
 = - \frac{p^{m_i}-1}{p^{m_i}(p-1)} \log(p) . \]

\begin{figure}[htbp]
\centering
\input{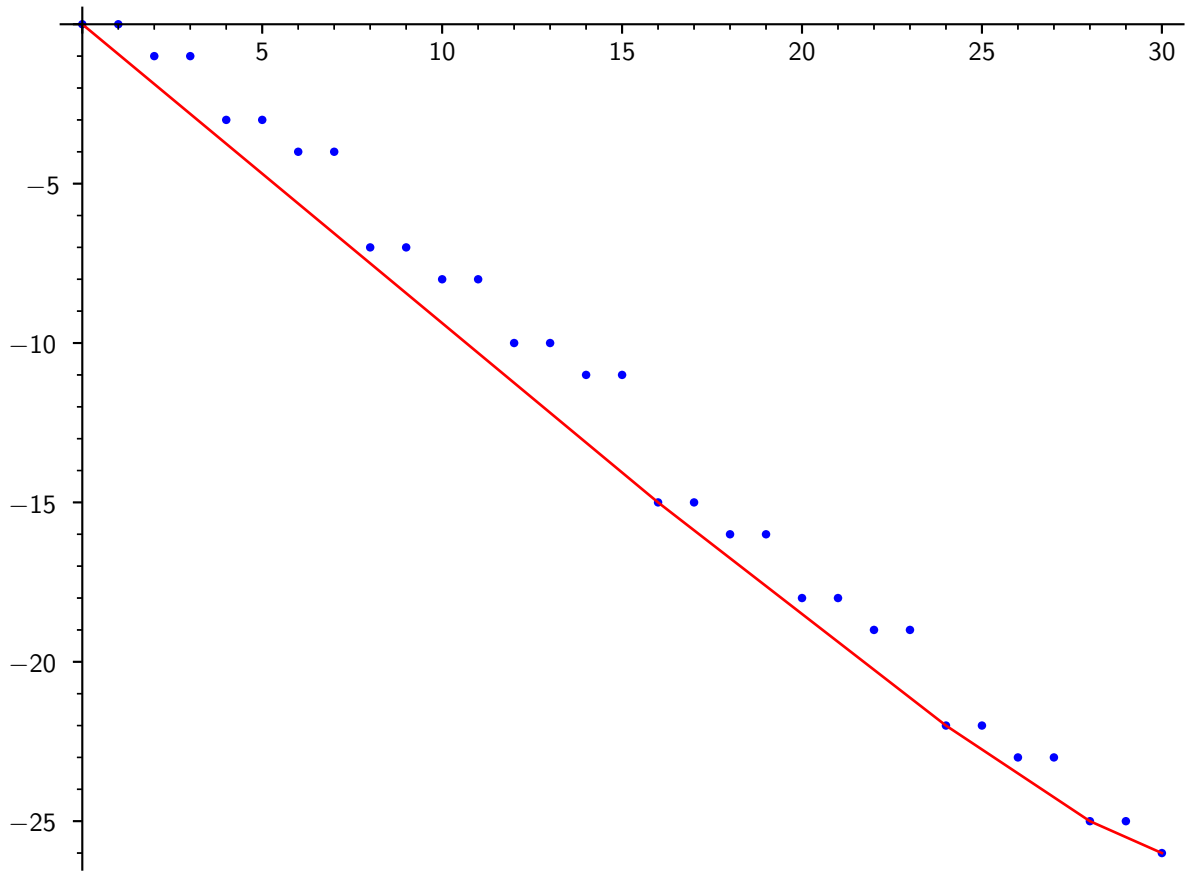}
\caption{Polygone de Newton du polynôme $\sum_{n=0}^{30}T^n/n!$
pour la valeur absolue $2$-adique. Les 4 pentes correspondent
à la décomposition $30=2^4+2^3+2^2+2$}
\end{figure}

Démontrons que $\phi$ est irréductible dans $\Q[T]$.
Considérons un facteur irréductible~$\psi$ de~$\phi$;
il s'agit de prouver que $\deg(\psi)=\deg(\phi)$.

Soit~$p$ un nombre premier qui divise~$n$ et soit $\psi_1\in\Q_p[T]$
un facteur irréductible de~$\psi$.
Comme $\Q_p$ est complet, le théorème~\ref{theo.newton-pol-pol}, (3),
entraîne que le polygone de Newton (pour la valeur
absolue $p$-adique) de~$\psi_1$ n'a qu'une seule pente,
et cette pente~$\nu$ est de la forme $k \log(p)/\deg(\psi_1)$,
pour un certain entier relatif~$k$.
D'autre part, $\nu$ apparaît dans les pentes du polygone de Newton de~$\phi$,
de sorte qu'il existe $i\in\{1,\dots,s\}$
tel que $kp^{m_i}=-\deg(\psi_1)(p^{m_i}-1)/(p-1)$.
En particulier, $p^{m_i}$ divise~$\deg(\psi_1)$. 
Or, avec les notations précédentes, on $m_s=v_p(n)$,
si bien que $p^{v_p(n)}$ divise~$\deg(\psi_1)$.
Comme c'est le cas de chaque facteur irréductible~$\psi_1$
de~$\psi$, il s'ensuit que $p^{v_p(n)}$ divise~$\deg(\psi)$.

Comme ceci vaut pour tout nombre premier~$p$ qui divise~$n$,
on en conclut que $\deg(\phi)=n=\prod_p p^{v_p(n)}$ divise~$\deg(\psi)$.
Par conséquent, $\deg(\psi)=\deg(\phi)$, et cela conclut
la preuve que $\phi$ est irréductible.

En précisant cet argument, \citet{Coleman-1987}
redémontre un théorème de~\citet{Schur-1930}
selon lequel le groupe de Galois du polynôme~$\phi$
est le groupe alterné~$\mathfrak A_n$ si $n$~est multiple de~$4$,
et le groupe symétrique sinon.
\end{exem}

\section{Fonctions et équations analytiques}

Soit $F$ un corps muni d'une valeur absolue 
pour laquelle il est complet.
On peut développer une théorie des séries
entières dans~$F$ dont les débuts sont tout à fait parallèles
à celle des séries entières à coefficients complexes.
Pour simplifier, on n'expose que le cas d'une seule indéterminée.

\subsection{}
Soit $\phi = \sum_{n=0}^\infty c_n T^n$
une série formelle en une indéterminée~$T$ à coefficients dans~$F$.
On définit son \emph{rayon de convergence} 
$R_\phi\in[0;+\infty]$
par la formule usuelle:
\[ R_\phi = \left(\limsup_{n} \abs {c_n}^{1/n}\right)^{-1}. \]
Par définition, si $\abs a<R_\phi$, alors $c_n a^n$ tend vers~$0$,
tandis que si $\abs a>R_\phi$, alors $c_n a^n$ n'est pas bornée.

Le cas de la valeur absolue triviale est peu intéressant:
on a $R_\phi=1$  si $\phi\neq 0$, et pour $a\neq 0$,
la série $\sum c_na^n$ ne converge que si $\phi$ est un polynôme.
Le cas d'une valeur absolue archimédienne est probablement bien connu;
on supposera donc dans la suite que la valeur absolue est
ultramétrique et non triviale.

\subsection{}
Pour toute série formelle $\phi\in F\lbra T\rbra$, disons $\phi=\sum c_nT^n$, 
et tout nombre réel~$R\geq0$, posons 
$\norm\phi_R=\sup_n \abs{c_n}R^n$; c'est un élément de~$[0;+\infty]$.
C'est une fonction croissante de~$R$, continue à gauche :
pour $R>0$, on a 
\[ \norm\phi_R = \sup_{r<R} \norm\phi_r.\]
Soit en effet $A$ tel que $A<\norm\phi_R$;
il existe donc $n\geq 0$ tel que $A<\abs{c_n}R^n\leq \norm\phi_R$;
il existe alors $r$ tel que $r<R$ et $A<\abs{c_n}r^n$,
d'où $A<\norm\phi_r\leq \norm\phi_R$.
\subsection{}
Pour $a\in F$ et $\phi,\psi\in F\lbra T\rbra$ tels que $\norm\phi_R,\norm\psi_R<\infty$, on a les relations :
\begin{enumerate}
\item Si $R>0$, alors $\norm\phi_R=0$ si et seulement si $\phi=0$
(pour $R=0$, on a $\norm\phi_0=\abs{c_0}$);
\item On a $\norm{\phi+\psi}_R\leq \sup(\norm\phi_R,\norm\psi_R)$;
\item Avec la convention $0\cdot\infty=0$, on a 
$\norm{a\phi}_R=\abs a\norm\phi_R$ 
et $\norm{\phi\psi}=\norm\phi\norm\psi$.
\end{enumerate}
Cela démontre que l'ensemble des séries~$\phi$ telles que $\norm\phi_R<+\infty$
est une sous-algèbre de $F\lbra T\rbra$ sur laquelle $\norm{\,\cdot\,}_R$
est une norme et une valeur absolue.
Seule la propriété de multiplicativité n'est pas immédiate:
posons $\phi=\sum a_n T^n$ et $\psi=\sum b_nT^n$;
alors, $\phi\psi=\sum_n c_n T^n$, où
$ c_n = \sum_{k=0}^n a_{n-k}b_k$ pour tout~$k$.
Par hypothèse, on a $\abs{a_{n-k}}R^{n-k}\leq \norm\phi_R$
et $\abs{b_k}R^k\leq \norm\psi_R$  pour tous $k,n\in\N$ tels que $k\leq n$;
par l'inégalité ultramétrique, on en déduit 
\[ \abs{c_n}R^n\leq \sup_k (\abs{a_{n-k}}\abs{b_k}R^n)
\leq \norm\phi_R\norm\psi_R, \]
de sorte que $\norm{\phi\psi}_R\leq \norm\phi_R\norm\psi_R$.
Pour démontrer l'autre inégalité, considérons des nombres réels~$\alpha$
et~$\beta$ tels que 
$0\leq \alpha<\norm\phi_R$  et $0\leq \beta<\norm\psi_R$.
Soit alors $r\in\R$ tel que $0\leq r<R$ et $\alpha<\norm\phi_r$ et $\beta<\norm\psi_r$. 
Comme $0\leq r<R$, 
les suites $\abs{a_n}r^n$ et $\abs{b_n}r^n$ tendent vers~$0$,
de sorte qu'il existe un plus grand entier $m\in\N$
tel que $\abs{a_m}r^m=\norm\phi_r$ et un plus grand entier $p\in\N$
tel que $\abs{b_p}r^p=\norm\psi_r$.
Posons $n=m+p$;
dans l'expression $c_n=\sum_{k=0}^n a_{n-k}b_k$, le terme $a_mb_p$
est le seul qui soit de valeur absolue maximale, 
de sorte que $\abs{c_n}r^n=\abs{a_m}r^m\abs{b_p}r^p=\norm\phi_r\norm\psi_r$.
Alors, $\norm{\phi\psi}_r=\norm\phi_r\norm\psi_r>\alpha\beta$.
A fortiori, $\norm{\phi\psi}_R\geq \alpha\beta$,
et l'inégalité $\norm{\phi\psi}_R\geq \norm\phi_R\norm\psi_R$ 
s'en déduit en faisant tendre~$\alpha$ et~$\beta$
vers $\norm\phi_R$ et $\norm\psi_R$ respectivement.

\begin{lemm}
Soit $R$ un nombre réel strictement positif.
Les séries $\sum c_n T^n$ telles que $(\abs{c_n}R^n)$
tende vers~$0$ forment une sous-algèbre de $F\lbra T\rbra$
dont les éléments ont tous rayon de convergence au moins~$R$.

La restriction de~$\norm{\,\cdot\,}_R$ à cette algèbre
est une norme et une valeur absolue pour laquelle elle est complète.
\end{lemm}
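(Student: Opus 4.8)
Le plan est d'introduire la notation $B_R$ pour l'ensemble des séries $\phi=\sum c_nT^n\in F\lbra T\rbra$ telles que $\abs{c_n}R^n\to0$, et de traiter les assertions dans l'ordre de l'énoncé. La stabilité de $B_R$ par combinaison linéaire est immédiate via l'inégalité ultramétrique : pour $\phi=\sum a_nT^n$ et $\psi=\sum b_nT^n$ dans $B_R$ et $\lambda\in F$, on a $\abs{\lambda a_n+b_n}R^n\leq\sup(\abs\lambda\abs{a_n}R^n,\abs{b_n}R^n)\to0$. Pour le produit $\phi\psi=\sum c_nT^n$, où $c_n=\sum_{k=0}^na_{n-k}b_k$, je majorerais $\abs{c_n}R^n\leq\sup_{0\leq k\leq n}(\abs{a_{n-k}}R^{n-k})(\abs{b_k}R^k)$ ; comme les suites $(\abs{a_n}R^n)$ et $(\abs{b_n}R^n)$ sont bornées, disons par~$M$, et tendent vers~$0$, pour $n$ assez grand et $0\leq k\leq n$ l'un au moins des entiers $n-k$ et $k$ dépasse le seuil au-delà duquel le facteur correspondant est~$<\varepsilon$, d'où $\abs{c_n}R^n\leq M\varepsilon$. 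Ainsi $\phi\psi\in B_R$ et $B_R$ est une sous-$F$-algèbre unitaire de $F\lbra T\rbra$ (elle contient la série constante~$1$). Pour le rayon de convergence : si $\phi\in B_R$, alors $\abs{c_n}R^n\leq1$ pour $n$ assez grand, donc $\abs{c_n}^{1/n}\leq R^{-1}$ pour $n$ assez grand, d'où $\limsup_n\abs{c_n}^{1/n}\leq R^{-1}$ et $R_\phi\geq R$.

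Ensuite, je remarquerais que tout $\phi\in B_R$ vérifie $\norm\phi_R<+\infty$, une suite qui tend vers~$0$ étant bornée ; ainsi $B_R$ est une sous-algèbre de l'algèbre des séries de norme $\norm{\,\cdot\,}_R$ finie décrite juste avant le lemme. Toutes les propriétés de $\norm{\,\cdot\,}_R$ déjà établies --- en particulier $\norm\phi_R=0\Leftrightarrow\phi=0$ (valable puisque $R>0$), l'inégalité ultramétrique, et la multiplicativité $\norm{\phi\psi}_R=\norm\phi_R\norm\psi_R$ --- se restreignent à $B_R$, ce qui en fait une norme et une valeur absolue (même ultramétrique) sur $B_R$.

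Reste la complétude, qui est le point substantiel. Je partirais d'une suite de Cauchy $(\phi^{(k)})_k$ dans $(B_R,\norm{\,\cdot\,}_R)$, écrite $\phi^{(k)}=\sum_nc_n^{(k)}T^n$. Pour $n$ fixé, l'inégalité $\abs{c_n^{(k)}-c_n^{(l)}}R^n\leq\norm{\phi^{(k)}-\phi^{(l)}}_R$, et le fait que $R^n>0$, entraînent que $(c_n^{(k)})_k$ est de Cauchy dans~$F$ ; comme $F$ est complet, elle converge vers un élément $c_n\in F$, et je poserais $\phi=\sum_nc_nT^n$. Étant donné $\varepsilon>0$, il existe $K$ tel que $\abs{c_n^{(k)}-c_n^{(l)}}R^n\leq\varepsilon$ pour tout~$n$ et tous $k,l\geq K$ ; en faisant tendre~$l$ vers l'infini, il vient $\abs{c_n^{(k)}-c_n}R^n\leq\varepsilon$ pour tout~$n$ et tout $k\geq K$, c'est-à-dire $\norm{\phi^{(k)}-\phi}_R\leq\varepsilon$. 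Il reste à vérifier que $\phi\in B_R$ : puisque $\phi^{(K)}\in B_R$, on a $\abs{c_n^{(K)}}R^n<\varepsilon$ pour $n$ assez grand, d'où $\abs{c_n}R^n\leq\sup(\abs{c_n-c_n^{(K)}}R^n,\abs{c_n^{(K)}}R^n)\leq\varepsilon$ pour $n$ assez grand, et par suite $\abs{c_n}R^n\to0$. Alors $\phi^{(k)}-\phi\in B_R$ et $\norm{\phi^{(k)}-\phi}_R\to0$, ce qui conclut.

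La difficulté principale --- plutôt un point de vigilance qu'un obstacle véritable --- est de s'assurer que la limite $\phi$ reste dans $B_R$, c'est-à-dire que la condition « $\abs{c_n}R^n\to0$ » (et non simplement le caractère borné de $(\abs{c_n}R^n)$, qui correspondrait à $\norm\phi_R<\infty$) est préservée par passage à la limite pour $\norm{\,\cdot\,}_R$ ; c'est exactement ce que fournit la comparaison avec un $\phi^{(K)}$ convenable ci-dessus. Tout le reste consiste à transcrire par restriction les propriétés de $\norm{\,\cdot\,}_R$ déjà démontrées.
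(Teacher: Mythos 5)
Votre démonstration est correcte et suit essentiellement le même chemin que celle du texte : même argument de découpage $n=p+q$ avec $p\geq N$ ou $q\geq N$ pour la stabilité par produit, et même schéma pour la complétude (convergence coefficient par coefficient, passage à la limite dans l'inégalité de Cauchy, puis comparaison avec un terme fixé $\phi^{(K)}$ pour vérifier que la limite reste dans l'algèbre). Le point de vigilance que vous identifiez --- la préservation de la condition de tendance vers~$0$, et pas seulement du caractère borné --- est exactement celui que traite le texte.
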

Lorsque $R=1$, cette algèbre est notée $F\langle T\rangle$
et est appelée \emph{algèbre de Tate} en l'indéterminée~$T$;
ses éléments sont aussi appelés \emph{séries formelles restreintes}.
La définition générale de cette algèbre semble due à~\citet{Berkovich-1990};
nous l'appellerons l'\emph{algèbre de Tate} de rayon~$R$ et la noterons
$F\langle T/R\rangle$. 

\begin{proof}
Les démonstrations se déduisent pour l'essentiel
de ce qui précède.  Démontrons la stabilité par produit.
Si $\phi=\sum a_n T^n$, $\psi=\sum b_nT^n$ et $\phi\psi=\sum c_nT^n$,
on a $ c_n = \sum_{p+q=n} a_p b_q $
pour tout~$n$;
démontrons que $\abs{c_n}R^n$ tend vers~$0$.
Soit $\eps>0$; soit $N\geq 0$
tel que $\abs{a_n}R^n\leq \eps$ 
et $\abs{b_n}R^n\leq \eps$ pour $n\geq N$.
Alors, si $n\geq 2N$, toute décomposition $n=p+q$
entraîne $p\geq N$ ou $q\geq N$;
dans le premier cas, $\abs{a_p}R^p\abs{b_q}R^q\leq \eps \norm \psi$,
et dans le second, $\abs{a_p}R^p\abs{b_q}R^q\leq \eps\norm\phi$.
Ainsi, $\abs{c_n}R^n\leq \eps\sup(\norm\phi,\norm\psi)$.
Cela prouve que $\abs{c_n}R^n$ tend vers~$0$.

Cette algèbre est complète:
soit $(\phi_m)$ une suite de Cauchy dans~$F\langle T/R\rangle$;
pour tout~$m$, posons $\phi_m=\sum c_{m,n}T^n$.

Soit $n$ un entier.
On a  $\abs {c_{m,n}-c_{p,n}}R^n\leq \norm{\phi_m-\phi_p}$,
pour tous~$m,p$,
ce qui prouve que  la suite $(c_{m,n})_m$
vérifie le critère de Cauchy. Puisque~$F$ est complet,
elle converge dans~$F$; notons~$c_n$ sa limite.
Soit $\phi$ la série formelle $\sum c_n T^n$;
démontrons qu'elle appartient à~$F\langle T/R\rangle$
et que $(\phi_m)$ converge vers~$\phi$ dans~$F\langle T/R\rangle$.

Soit $\eps>0$ et soit $M$ un entier tel que $\norm{\phi_m-\phi_p}\leq\eps$
pour $m,p\geq M$.
Fixons~$m\geq M$; en faisant tendre~$p$ vers l'infini,
on obtient $\abs{c_{m,n}-c_n}R^n\leq \eps$ pour tout~$n$.
Puisque $(\abs{c_{m,n}}R^n)_m$ tend vers~$0$, il existe 
un entier~$N$
tel que $\abs{c_{m,n}}R^n\leq \eps$ pour tout $n\geq N$.
Alors $\abs{c_n}R^n \leq \sup(\abs{c_{m,n}-c_n},\abs{c_{m,n}})R^n\leq\eps$
pour $n\geq N$. 
Cela prouve que $(\abs{c_n}R^n)$ tend vers~$0$,
donc $\phi\in F\langle T/R\rangle$.
Les mêmes inégalités prouvent que $\norm{\phi_m-\phi}\leq\eps$
pour $m\geq M$. Cela démontre que $(\phi_m)$ tend vers~$\phi$.
\end{proof}

\subsection{}
Soit $\phi\in F\langle T/R\rangle$; notons $\phi=\sum c_nT^n$.
Soit $a\in F$ tel que $\abs a \leq R$.
Par hypothèse, la suite de terme général~$c_n a^n$ tend vers~$0$;
Comme $F$ est complet, la série $\sum c_n a^n$ converge ;
on note $\phi(a)$ sa somme.
Il y a même convergence normale, de sorte que
l'application $a\mapsto \phi(a)$ ainsi définie
est continue sur la boule circonférenciée $B(0,R)$.

De plus, l'application $\phi\mapsto \phi(a)$ est un morphisme d'algèbres
de $F\langle T/R\rangle $ dans~$F$.

Il y a également un résultat pour la composition des séries
formelles: soit $\phi\in F\lbra T\rbra$
et soit $\psi\in F\langle T/S\rangle$. Notons $\phi=\sum_{m=0}^\infty c_mT^m$.
Si la suite $(\abs{c_m} \norm\psi_S^m)$ tend vers~$0$,
la série $\sum c_m \psi^m$ converge dans l'algèbre complète
$F\langle T/S\rangle$.  On note $\phi\circ\psi$ sa limite.
C'est en particulier le cas si $\norm\psi_S<R_\phi$,
ou si $\phi\in F\langle T/R\rangle$ et $\norm\psi_S\leq R$.

Notons deux cas particuliers importants:
\begin{enumerate}
\item Soit $a\in F$;  supposons $\phi\in F\langle T/R\rangle$ et $\abs a \leq R$. Alors, on peut former la série formelle $\phi_a=\phi(a+T)$;
elle appartient à $F\langle T/R\rangle$; pour tout $b\in B(0;R)$,
on a $\phi_a(b)=\phi(a+b)$. 
Les coefficients de~$\phi_a$ sont donnés par la « formule de Taylor ».
Il existe une unique suite $(\phi^{[n]})$ d'éléments de~$F\langle T/R\rangle$
telle que 
\[ \phi_a(T) = \sum_{n=0}^\infty \phi^{[n]} (a) T^n. \]
Explicitement, si $\phi=\sum c_mT^m$, on a 
\[ \phi^{[n]} = \sum_{m=0}^\infty c_{m+n} (m+n)\dots (m+1) T^n. \]
Lorsque $F$ est de caractéristique zéro, $\phi^{[n]}$ est reliée
à la $n$-ième dérivée formelle de~$\phi$ 
par la formule $\phi^{[n]}=\phi^{(n)}/n!$.
En particulier,  $\phi$ est indéfiniment dérivable sur $B(0;R)$,
de dérivée~$\phi^{[1]}$.

\item Le rayon de convergence de la série formelle $1/(1-T)$ est égal à~$1$.
Ainsi, toute série $\psi\in F\langle T/S\rangle$
telle que $\norm{\psi-1}_S<1$ est inversible dans~$F\langle T/S\rangle$.
Nous déterminerons ci-dessous les éléments inversibles de cette algèbre.
\end{enumerate}


\begin{prop}\label{prop.tate-inversible}
Soit $\phi\in F\langle T/R\rangle$; notons $\phi=\sum_{n=0}^\infty c_nT^n$.
Les conditions suivantes sont équivalentes:
\begin{enumerate}
\item La série~$\phi$ est inversible dans $F\langle T/R\rangle$;
\item Pour tout entier~$n>0$, on a $\abs{c_n}R^n<\abs{c_0}$;
\item On a $\norm{\phi-\phi(0)}_R <\abs{\phi(0)}$.
\end{enumerate}
\end{prop}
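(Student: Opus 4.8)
The plan is to establish the trivial equivalence (2)$\Leftrightarrow$(3) first, then close the loop with (3)$\Rightarrow$(1) and (1)$\Rightarrow$(2). Writing $\phi=\sum_{n\ge0}c_nT^n$, one has $\phi(0)=c_0$ and $\phi-\phi(0)=\sum_{n\ge1}c_nT^n$, so $\norm{\phi-\phi(0)}_R=\sup_{n\ge1}\abs{c_n}R^n$, and (3) asserts precisely that this quantity is $<\abs{c_0}$, i.e.\ (2); if $c_0=0$ both statements fail, consistently. So the real content is the equivalence of invertibility with, say, condition (3).

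For (3)$\Rightarrow$(1) I would argue exactly as in the remark on the series $1/(1-T)$ made just before the statement. Condition (3) forces $c_0=\phi(0)\ne0$; set $u=1-c_0^{-1}\phi\in F\langle T/R\rangle$, so that $\norm u_R=\norm{\phi-\phi(0)}_R/\abs{\phi(0)}<1$ by (3). Then $\norm{u^n}_R=\norm u_R^n\to0$, and since $F\langle T/R\rangle$ is complete by the preceding lemma, the geometric series $\sum_{n\ge0}u^n$ converges to some element $v$ of the algebra; passing to the limit in $(1-u)\sum_{n=0}^N u^n=1-u^{N+1}$ gives $(1-u)v=1$, whence $\phi\cdot(c_0^{-1}v)=1$ and $\phi$ is invertible.

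For (1)$\Rightarrow$(2) --- the main step --- I would reuse the ``leading term'' mechanism that underlies the multiplicativity of $\norm{\,\cdot\,}_R$. Suppose $\phi\psi=1$ with $\psi=\sum d_nT^n\in F\langle T/R\rangle$; comparing constant terms gives $c_0d_0=1$, so $c_0\ne0$ and $\norm\phi_R,\norm\psi_R>0$. Since $\abs{c_n}R^n\to0$ and $\abs{d_n}R^n\to0$, there is a \emph{largest} integer $m$ with $\abs{c_m}R^m=\norm\phi_R$ and a largest integer $p$ with $\abs{d_p}R^p=\norm\psi_R$. In the coefficient $e_{m+p}=\sum_{i+j=m+p}c_id_j$ of $T^{m+p}$ in $\phi\psi$, the term $c_md_p$ has $R$-weighted absolute value $\abs{c_md_p}R^{m+p}=\norm\phi_R\norm\psi_R$, while every other term $c_id_j$ is strictly smaller: if $i>m$ then $\abs{c_i}R^i<\norm\phi_R$ by maximality of $m$, and if $i<m$ then $j>p$ and $\abs{d_j}R^j<\norm\psi_R$ by maximality of $p$. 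By the sharp form of the ultrametric inequality, $\abs{e_{m+p}}R^{m+p}=\norm\phi_R\norm\psi_R>0$, so $e_{m+p}\ne0$; but $\phi\psi=1$ has all its positive-degree coefficients equal to zero, which forces $m+p=0$, i.e.\ $m=0$. By the choice of $m$ as the largest index attaining $\norm\phi_R$, this says $\abs{c_n}R^n<\norm\phi_R=\abs{c_0}$ for every $n\ge1$, which is (2).

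The only place where real care is needed is this last step, and it is worth underlining why: plain multiplicativity of $\norm{\,\cdot\,}_R$ yields only $\norm\phi_R\norm\psi_R=1$, hence --- together with $\norm\phi_R\ge\abs{c_0}$ and $\norm\psi_R\ge\abs{d_0}=\abs{c_0}^{-1}$ --- the equality $\norm\phi_R=\abs{c_0}$ and so the \emph{non-strict} bound $\abs{c_n}R^n\le\abs{c_0}$; recovering strictness is exactly what the uniqueness of the dominating term $c_md_p$ provides, by reproducing the mechanism already used to prove multiplicativity in the Tate algebra. Nothing beyond completeness of $F$ and the ultrametric inequality enters.
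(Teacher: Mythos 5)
Votre démonstration est correcte et suit pour l'essentiel la même route que celle du texte : série géométrique $\sum u^n$ dans l'algèbre complète pour (3)$\Rightarrow$(1), et unicité du terme dominant $c_md_p$ dans le produit de Cauchy (obtenue en prenant les \emph{plus grands} indices réalisant les normes) pour (1)$\Rightarrow$(2). Le seul point que vous passez un peu vite est (2)$\Rightarrow$(3) : une famille infinie d'inégalités strictes $\abs{c_n}R^n<\abs{c_0}$ ne donne une inégalité stricte pour la borne supérieure que parce que $\abs{c_n}R^n\to0$ force cette borne supérieure sur $n\geq1$ à être atteinte --- c'est exactement l'argument d'une ligne que le texte explicite.
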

\begin{proof}
Supposons la condition~(3) satisfaite et démontrons
que $\phi$ est inversible.
Quitte à remplacer~$\phi$ par~$\phi/\phi(0)$, on se ramène
au cas où $\phi(0)=1$ et où $\norm{1-\phi}_R<1$.
On peut alors considérer la série convergente $\sum (1-\phi)^n$ dans l'algèbre
complète $F\langle T/R\rangle$, et sa limite~$\psi$
vérifie $(1-\phi)\psi = \psi - 1$, c'est donc l'inverse de~$\phi$.

Supposons maintenant que~$\phi$ soit inversible dans~$F\langle T/R\rangle$
et soit $\psi$ son inverse.
Posons $\phi=\sum_{n=0}^\infty a_nT^n$, $\psi=\sum_{n=0}^\infty b_nT^n$
et $\phi\psi=\sum_{n=0}^\infty c_nT^n$ (on a donc $c_0=1$ et $c_n=0$ si $n>0$).
Soit $m$ le plus grand entier tel que $\abs{a_m}R^m=\norm{\phi}_R$
et soit $n$ le plus grand entier tel que $\abs{b_n}R^n=\norm{\psi}_R$.
Dans l'expression 
$ c_{n+m}=\sum_{k=0}^{m+n} a_{m+n-k} b_k $, 
le terme $a_m b_n$ est le seul de valeur absolue maximale,
si bien que $\abs{c_{n+m}}=\abs{a_m}\abs{b_n}$.
Puisque $c_k=0$ pour $k>0$, cela entraîne $m=n=0$.
On a donc $\abs{a_k}R^k<\abs{a_0}$ pour tout entier~$k>0$,
ce qui prouve~(2).

Supposons enfin que l'assertion~(2) est vérifiée.
Comme la suite $(\abs{a_k}R^k)$ tend vers~$0$,
il existe un plus entier~$n>0$ tel que $\abs{a_n}R^n=\sup_{k>0} \abs{a_k}R^k
=\norm{\phi-a_0}_R$.
Puisque $\abs{a_n}R^n<\abs{a_0}$ par hypothèse et $a_0=\phi(0)$, il vient donc
$\norm{\phi-\phi(0)}_R<\abs{\phi(0)}$.
\end{proof}

\subsection{}
La théorie des polygones de Newton admet une variante pour
les fonctions analytiques d'une variable.
Soit $\phi\in F\lbra T\rbra$; notons-la $\phi=\sum c_nT^n$.

Pour tout $x\in\R$, on pose 
\[ \tau_\phi(x)=\log(\norm\phi_{e^x}) = \sup_n \log (\abs{c_n})+n x , \]
formule dans laquelle on convient d'exclure les entiers~$n$
tels que $c_n=0$, ou bien l'on pose $\log(\abs 0)=-\infty$…
Si le rayon de convergence de~$\phi$ est nul,
alors $\tau_\phi$ est constante, égale à~$+\infty$.
Si $\phi=0$, alors $\tau_\phi$ est constante, égale à~$-\infty$.
Dans la suite de la discussion,
on exclut implicitement ces deux cas triviaux.

Soit $d=\ord(\phi)$ le plus petit entier naturel tel que $c_d\neq 0$.
Lorsque $x$ tend vers~$-\infty$, on a $\tau_\phi(x)=\log(\abs{c_d})+d x$.

Lorsque $\abs{c_n} e^{r^n}$ tend vers~$0$, 
en particulier lorsque $r< \log(R_\phi)$,
ou bien lorsque $r\leq \log(R)$ et $\phi\in F\langle T/R\rangle$,
il existe un plus grand entier~$N$ 
tel que $\log(\abs{c_n})+n r =\tau_\phi(r)$.
Alors, il suffit de considérer
les entiers~$n\leq N$ 
dans la borne supérieure qui définit $\tau_\phi(r)$,
si bien que $\nu_\phi$ 
est une fonction convexe, affine par morceaux,
sur l'intervalle~$\mathopen]-\infty;r]$.
La fonction~$\tau_\phi$ est croissante, semi-continue
inférieurement, continue à gauche, convexe sur~$\R$, 
à valeurs dans~$\R\cup\{+\infty\}$.

Soit $x\in\R$ et soit $R=e^x$; on suppose que $\phi\in F\langle T/R\rangle$.
Pour tout $a\in F$ tel que $\abs a=R$, on peut donc évaluer $\phi(a)$.
Par l'inégalité ultramétrique, on a $\log(\abs{\phi(a)})\leq \tau_\phi(a)$.

Comme $\abs{c_n}R^n$ tend vers~$0$ et $x\leq\log(R)$, l'ensemble~$N$
des entiers~$n$ tels que $\log(\abs{c_n})+nx=\tau_\phi(x)$
est fini et non vide. S'il est réduit à un seul élément~$n$,
on a alors $\abs{\phi(a)}=\abs{c_n} R^n$, c'est-à-dire 
$\log(\abs{\phi(a)})=\tau_\phi(a)$; 
en particulier, $\phi(a)\neq0$.
Comme dans le cas des polynômes,
les valeurs absolues des zéros de~$\phi$ sont donc liées aux
points anguleux de~$\tau_\phi$.

\subsection{}
On définit le polygone de Newton~$\nu_\phi$ de~$\phi$
comme la transformée de Legendre de~$\tau_\phi$: pour $t\in\R$, on pose
\[ \nu_\phi(t) = \sup_{x} \left( x t - \tau_\phi(x)\right). \]
Bien entendu, dans cette borne supérieure, il suffit de se borner
aux nombres réels~$x$ tels que $x < \log(R_\phi)$.

Soit $d=\ord(\phi)$.
Pour $x$ tendant vers~$-\infty$, on a $\tau_\phi(x)=xd+\log(\abs{c_d})$,
donc $\nu_\phi(t)\geq -(d-t)x -\log(\abs{c_d})$.
Ainsi, on a $\nu_\phi(t)=+\infty$ si $t<d$.
D'autre part, si on écrit $\phi=T^d\phi_1$,
on a $\tau_\phi=dx+\tau_{\phi_1}$,
si bien que $\nu_{\phi}(t)=\nu_{\phi_1}(t-d)$.
On pourra ainsi se ramener au cas où $\ord(\phi)=0$,
c'est-à-dire $\phi(0)\neq0$.

Si $\phi$ est un polynôme de degré~$n$,
on a de même $\tau_\phi(x)=nx+\log(\abs{c_n})$ pour $x$ tendant 
vers~$+\infty$, donc $\nu_\phi(t)=+\infty$ si $t>n$.
On supposera souvent implicitement 
dans la suite que $\phi$ n'est pas un polynôme.

Par dualité en théorie des fonctions convexes,
$\nu_\phi$ est la plus grande fonction convexe semi-continue
inférieurement sur~$\R$ (à valeurs
dans~$\R\cup\{+\infty\}$)
telle que $\nu_\phi(m)\leq -\log(\abs{c_m})$ pour tout $m\in\N$
tel que $c_m\neq0$. 
Son graphe est le bord de l'enveloppe convexe supérieure
des points $(m,-\log(\abs{c_m}))$. 

Cela donne un moyen pratique de la construire dans des cas concrets.
On part en effet de l'égalité $\nu_\phi(0)=-\log(\abs{c_0})$ 
qui sera justifiée ci-dessous.
Posons $p=\inf_{n>0} (- \log(\abs{c_n}) + \log(\abs{c_0}))/n$;
la fonction $h\colon t\mapsto -\log(\abs{c_0})+pt$ est la plus grande
fonction affine qui coïncide avec~$\nu_\phi$ en~$0$
et minore~$\nu_\phi$.
Soit $N$ l'ensemble des entiers~$n>0$
tels que la borne supérieure précédente soit atteinte en~$n$.
S'il est vide ou s'il est infini, alors $\nu_\phi=h$.
Supposons que $N$ soit non vide et fini et soit~$n$ son plus grand
élément. La fonction $\nu_\phi-h$ sur~$[0;n]$ est convexe,
semi-continue inférieurement,  positive, et vaut~$0$ en~$0$ et~$n$;
elle est donc nulle, si bien que $\nu_\phi$ coïncide avec~$h$
sur l'intervalle~$[0;n]$.
On reprend alors l'argument précédent considérant 
la plus grande fonction affine de pente~$\geq p$
qui minore~$\nu_\phi$ et coïncide avec~$\nu_\phi$ en~$n$, etc.

\begin{exem}\label{exem.exp-log}
Supposons 
que $F$ soit le corps des nombres rationnels, muni de la valeur
absolue $p$-adique. Pour simplifier les notations, on suppose
que le logarithme est pris en base~$p$, c'est-à-dire que $\log(p)=1$.
\begin{enumerate}
\item Considérons le cas où
$\phi=e^T=\sum_{n=0}^\infty T^n/n!$.

On a vu la relation $-\log(1/\abs{n!})=(\sigma(n)-n)/(p-1)$,
où $\sigma(n)$ est la somme des chiffres du développement
en base~$p$ de~$n$. 
Par suite, la fonction affine $t\mapsto -t/(p-1)$
est inférieure ou égale à $-\log(\abs{1/n!})$ en~$n$;
on a donc $\nu_\phi(t)\geq -t/(p-1)$ pour tout $t\in\R_+$.
Comme $\nu_\phi(0)\leq -\log(\abs{1})=0$, on  a 
aussi $\nu_\phi(0)=0$.
Prenons $n=p^m$; on a $\sigma(n)=1$, donc $\nu_\phi(n)\leq (1-n)/(p-1)$.
Par convexité, il vient 
$\nu_\phi(t)\leq t/n(p-1) - t/(p-1)$ si $t\in[0;n]$.
Faisons tendre~$m$ (donc~$n$) vers~$+\infty$; il en résulte
l'inégalité $\nu_\phi(t)\leq -t/(p-1)$.
Ainsi, le polygone de Newton de~$\phi$ est donné par $t\mapsto -t/(p-1)$.

\item
Prenons maintenant $\phi=\log(1+T)=\sum_{n=1}^\infty (-1)^{n-1}T^n/n$.
Puisque $\ord(\phi)=1$, son polygone de Newton est infini entre~$0$ et~$1$.
Démontrons que pour tout entier~$m\geq 0$, on a $\nu_\phi(p^m)=-m$
et que la restriction de~$\nu_\phi$ à l'intervalle~$[p^m;p^{m+1}]$
est affine.

Notons~$h$ la fonction affine par morceaux ainsi définie;
il est immédiat qu'elle est décroissante et convexe.
pour tout $m\in\N$, on a $h(p^m)=-m=-\log(\abs{1/p^m})$,
donc $h(p^m)\leq \nu_\phi(p^m)$.
Soit $n\in\N$ tel que $n\geq 1$;
soit $m$ l'unique entier naturel tel que $p^m\leq n< p^{m+1}$;
puisque $n< p^{m+1}$, on a $v_p(n)\leq m$, 
donc $-\log(\abs{1/n}) \geq -m = h(p^m)$;
a fortiori, $-\log(\abs{1/n})\geq -(m+1)=h(p^{m+1})$.
Sur l'intervalle~$[p^m;p^{m+1}]$, la fonction $h$ est affine,
inférieure ou égale à~$\nu_\phi$, et coïncide avec~$\nu_\phi$, 
aux extrémités. Puisque $\nu_\phi$ est convexe, 
ces deux fonctions coïncident sur cet intervalle.
\end{enumerate}
\end{exem}

\begin{theo}
Soit $R$ un nombre réel strictement positif 
et soit $\phi\in F\langle T/R\rangle$ une série formelle restreinte.
On suppose que $\phi$ n'est pas un polynôme et que $\phi(0)\neq 0$.
\begin{enumerate}
\item
La fonction~$\nu_\phi$ est définie sur~$\R_+$;
elle est continue, convexe et affine par morceaux.

\item
Les nombres réels~$t$ tels que $\nu_\phi$ soit finie et non dérivable en~$t$
sont des entiers tels que $\nu_\phi(t)=-\log(\abs{c_t})$.
Soit $(n_0,n_1,\dots)$ la suite strictement croissante 
de ces entiers. On a $n_0=0$.

\item
Soit $a\in F$ tel que $\abs a\leq R$ et $\phi(a)=0$; 
alors $\log(\abs a)$ est une pente de~$\nu_\phi$. 

\item 
Soit~$m$ un entier tel que~$n_m$ est défini et
soit $\mu$ la pente de~$\nu_\phi$ sur l'intervalle~$[n_{m-1};n_m]$.
On a $\mu < \log(R_\phi)$.
Si, de plus, $\mu\leq\log(R)$,
il existe un unique couple $(P,\psi)$ où $P\in F[T]$ est
un polynôme de degré~$n_m-n_{m-1}$ tel que $P(0)=1$
et dont le polygone de Newton n'a qu'une pente, égale à~$\mu$,
et où $\psi\in F\langle T/R\rangle$ est telle que $\phi=P\psi$.
De plus, $\mu$ n'est pas une pente de~$\nu_\psi$.
\end{enumerate}
\end{theo}
\begin{proof}
\begin{enumerate}
\item
Soit $n$ un entier naturel tel que $c_n\neq0$;
rappelons aussi que $c_0\neq0$, par hypothèse.
Toute fonction convexe qui est majorée par $-\log(\abs{c_0})$
en~$0$ et par $-\log(\abs{c_n})$ en~$n$
est majorée sur l'intervalle~$[0;n]$, de sorte  
que $\nu_\phi$ est finie sur cet intervalle. Puisque $\phi$
n'est pas un polynôme, il existe des entiers~$n$ arbitrairement grands
tels que $c_n\neq0$. Cela prouve que $\nu_\phi$ est finie sur~$\R_+$.
Étant convexe et semi-continue inférieurement,
elle est donc continue sur cet intervalle.

\item
Soit~$h$ l'unique fonction affine par morceaux
qui est affine sur chaque intervalle de la forme~$[n;n+1]$
et coïncide avec~$\nu_\phi$ sur cet intervalle.
Elle est convexe et continue, et vérifie $\nu_\phi\leq h$.
Par construction, on a $h(n)\leq -\log(\abs{c_n})$
pour tout~$n$; par suite, $h=\nu_\phi$. Cela prouve
que $\nu_\phi$ est affine sur chaque intervalle de la forme~$[n;n+1]$.

La fonction~$h$ qui coïncide avec~$\nu_\phi$ sur~$[1;+\infty]$,
est affine sur~$[0;1]$ et vaut~$-\log(\abs{c_0})$ en~$0$
est convexe, semi-continue inférieurement, et majore~$\nu_\phi$; 
elle coïncide donc avec~$\nu_\phi$. Par suite, $\nu_\phi(0)=-\log(\abs{c_0})$.

Soit $n$ un entier tel que $n\geq 1$ et $\nu_\phi(n) < -\log(\abs{c_n})$;
supposons que les dérivées à droite et à gauche de~$\nu_\phi$ en~$n$
soient distinctes. Pour $\eps>0$ assez petit,
l'unique fonction~$h$ sur~$\R_+$ qui est affine sur chaque intervalle
de la forme~$[m;m+1]$, 
coïncide avec~$\nu_\phi$ hors de~$[n-1;n+1]$
et qui  vaut $\nu_\phi(n)+\eps$ en~$n$
est encore convexe, continue, et vérifie $ h(m)\leq-\log(\abs{c_m})$
pour tout $m\in\N$. Cela contredit la définition de~$\nu_\phi$.

\item
Soit $x\in\R$ tel que $x\leq\log(R)$.
Pour tout $a\in F$ tel que $\abs a=e^x$, on peut donc évaluer $\phi(a)$.
Par l'inégalité ultramétrique, on a $\log(\abs{\phi(a)})\leq \tau_\phi(a)$.
Supposons $\phi(a)=0$.
Comme $\abs{c_n}R^n$ tend vers~$0$ et $x\leq\log(R)$, l'ensemble~$N$
des entiers~$n$ tels que $\log(\abs{c_n})+nx=\tau_\phi(x)$
est fini et non vide. 
Pour $n\in N$, on a 
\[ \nu_\phi(n) \geq n x - \tau_\phi(x) = -\log(\abs{c_n}), \]
de sorte que $\nu_\phi(n)=-\log(\abs{c_n})$.

Si l'ensemble~$N$ est réduit à un seul élément~$n$,
on a alors $\abs{\phi(a)}=\abs{c_n} e^{nx}$, c'est-à-dire 
$\log(\abs{\phi(a)})=\tau_\phi(a)$, ce qui contredit
l'hypothèse $\phi(a)=0$.

Notons alors~$n$ et~$n'$ le plus petit et le plus grand élément de~$N$,
de sorte que $n<n'$, et soit~$h$  l'unique fonction affine
qui coïncide avec~$\nu_\phi$ en~$n$ et en~$n'$.
Démontrons que $\nu_\phi$ et~$h$ coïncident
sur l'intervalle~$[n;n']$. Par convexité, on a $\nu_\phi\leq h$
sur cet intervalle et $\nu_\phi\geq h$ en dehors.
Il suffit donc de prouver que $\nu_\phi=\sup(h,\nu_\phi)$ et
donc que l'on a $h(m)\leq -\log(\abs{c_m})$ pour tout $m\in\N$;
on peut pour cela supposer que $m\in[n;n']$.
Écrivons $m=(1-u)n+un'$, où $u\in[0;1]$.
L'inégalité $\log(\abs{c_m})+mx\leq \tau_\phi(x)$
se récrit
\begin{align*}
 \log(\abs{c_m}) & \leq - mx+\tau_\phi(x)  \\
& = (1-u) (-nx+\tau_\phi(n))
+ u (-n'x+\tau_\phi(n')) \\
& = (1-u)(-h(n))+u (-h(n'))=-h(m) ,\end{align*}
de sorte que $h(m)\leq -\log(\abs{c_m})$, ainsi qu'il fallait démontrer.

Ainsi, $\nu_\phi$ est affine sur l'intervalle~$[n;n']$, de pente
\[ \frac{\nu_\phi(n')-\nu_\phi(n)}{n'-n}
 = \frac{ (n'x-\tau_\phi(x))-(nx-\tau_\phi(x))}{n'-n}=x. \]
Cela prouve que $x=\log(\abs a)$ est une pente de~$\nu_\phi$.

\item
Par hypothèse, 
les pentes de~$\nu_\phi$ sur l'intervalle~$[n_m;+\infty\mathclose[$
sont strictement supérieures à~$\mu$; il existe donc un nombre réel~$\mu'>\mu$
tel que 
\[ \nu_\phi(t) \geq \nu_\phi(n_m)+\mu'(t-n_m) \]
pour tout $t\geq n_m$. 
En particulier, pour tout entier~$n\geq n_m$, on a 
\[ -\log(\abs{c_n}) \geq \nu_\phi(n) \geq \nu_\phi(n_m)+\mu'(n-n_m), \]
si bien que
la suite $( \abs{c_n} e^{n\mu'} )$ est bornée, si
bien que $\mu< \mu'\leq \log(R_\phi)$.

Supposons $\mu\leq \log(R)$.
Pour tout entier~$N$, posons $\phi_N=\sum_{n=0}^N c_nT^n$.
Supposons $N> n_j$. Alors, la restriction du polygone de Newton de~$\phi_N$
à l'intervalle~$[0;n_m] $ coïncide avec celui de~$\phi$.
Sa restriction à l'intervalle~$[n_m;N]$ ne coïncide pas
forcément avec celui de~$\phi$, c'est  d'ailleurs ce
qu'indique la confrontation
des exemples~\ref{exem.schur} et~\ref{exem.exp-log}, 
mais le polygone de Newton de~$\phi_N$
domine celui de~$\phi$ sur cet intervalle.

On déduit du théorème~\ref{theo.newton-pol-pol}
qu'il existe un unique polynôme~$P_N\in F[T]$
de degré~$n_m-n_{m-1}$ tel que $P_N(0)=1$,
dont le polygone de Newton est affine de pente~$\mu$,
et qui divise~$\phi_N$ dans~$F[T]$. Posons $\psi_N=\phi_N/P_N$.
Lorsque $N\to+\infty$, 
les polynômes~$P_N$ convergent dans~$F[T]$ 
vers un polynôme~$P$ dont le polygone de Newton
est affine, de pente~$\mu$,
les polynômes~$\psi_N$ convergent dans~$F\langle T/R\rangle$
vers une série formelle~$\psi$ et l'on a $\phi=P\psi$.
Par ailleurs, les pentes de~$\psi_N$ qui sont~$<\mu$
sont celles de~$\phi$, et les autres sont~$\geq \mu'$.
En passant à la limite, on voit que~$\mu$ n'est pas une pente de~$\psi$.
\qedhere
\end{enumerate}
\end{proof}

\begin{prop}
Soit $\phi\in F\langle T/R\rangle$, 
soit $P\in F[T]$ un polynôme non nul; 
en notant $P=\sum_{k=0}^d c_k T^k$, on suppose
que $\norm P_R=\abs{c_d}R^d$.
Il existe un unique couple $(\rho,\psi)$,
où $\rho\in F[T]$ est un polynôme de degré~$<d$ et
$\psi\in F\langle T/R\rangle$, tel que $\phi = P \psi + \rho$.
De plus, on a  $\norm \rho_R\leq\norm\phi_R$ et 
$\norm P_R \norm \psi_R \leq \norm\phi_R$.
\end{prop}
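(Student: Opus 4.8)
Cette proposition est le \emph{théorème de division de Weierstrass} dans l'algèbre de Tate $F\langle T/R\rangle$, l'hypothèse $\norm P_R=\abs{c_d}R^d$ jouant le rôle de la condition de polynôme « distingué ». L'outil de base, utilisé partout, est l'observation suivante sur le \emph{terme décisif}: si $\theta\in F\langle T/R\rangle$ est non nul et si $m$ est le plus grand entier tel que $\abs{\theta_m}R^m=\norm\theta_R$, alors le coefficient de $T^{d+m}$ dans $P\theta=\sum_n\big(\sum_{i+j=n}c_i\theta_j\big)T^n$ vérifie $\abs{(P\theta)_{d+m}}R^{d+m}=\norm P_R\norm\theta_R\neq 0$. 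En effet, le terme $c_d\theta_m$ contribue exactement $\norm P_R\norm\theta_R$ (après multiplication par $R^{d+m}$), tandis que tout autre terme $c_i\theta_j$ avec $i+j=d+m$ a $i<d$, donc $j>m$ et $\abs{\theta_j}R^j<\norm\theta_R$, alors que $\abs{c_i}R^i\leq\norm P_R$: il contribue strictement moins. Ainsi $P\theta$ possède un coefficient non nul en degré $d+m\geq d$, donc $P\theta$ n'est jamais un polynôme non nul de degré $<d$. L'unicité en découle aussitôt: si $\phi=P\psi_1+\rho_1=P\psi_2+\rho_2$, alors $P(\psi_1-\psi_2)=\rho_2-\rho_1$ est de degré $<d$, d'où $\psi_1=\psi_2$, puis $\rho_1=\rho_2$.

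Pour l'existence, j'écrirais $P=c_dT^d+P_1$ avec $\deg P_1<d$; l'hypothèse donne $\norm{P_1}_R\leq\sup(\norm P_R,\abs{c_d}R^d)=\norm P_R$. Soit $U\colon F\langle T/R\rangle\to F\langle T/R\rangle$ l'application $F$-linéaire continue $U\big(\sum a_nT^n\big)=\sum_{n\geq d}a_nT^{n-d}$. Chercher $\phi=P\psi+\rho$ avec $\deg\rho<d$ revient, en comparant les coefficients de $T^n$ pour $n\geq d$, à résoudre l'équation $c_d\psi=U(\phi-P_1\psi)$, c'est-à-dire $\psi+V(\psi)=\eta_0$ où $\eta_0=c_d^{-1}U(\phi)\in F\langle T/R\rangle$ et $V(\psi)=c_d^{-1}U(P_1\psi)$. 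Réciproquement, si $\psi$ résout cette équation et si l'on pose $\rho=\phi-P\psi$, la relation montre que $\rho_n=0$ pour $n\geq d$, donc $\rho\in F[T]$ est de degré $<d$.

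Il reste donc à inverser $\mathrm{id}+V$. L'estimation clé, qui utilise à la fois $\deg P_1<d$ et $\norm{P_1}_R\leq\norm P_R$, est $\abs{(V\eta)_m}R^m\leq\sup_{j\geq m+1}\abs{\eta_j}R^j$ pour tout $m$: chaque application de $V$ « décale le support vers le haut d'un cran ». En itérant, on obtient $\norm{V^k\eta}_R\leq\sup_{j\geq k}\abs{\eta_j}R^j$; comme $\abs{(\eta_0)_j}R^j\to0$, la série $\psi=\sum_{k\geq 0}(-1)^kV^k(\eta_0)$ converge dans l'algèbre complète $F\langle T/R\rangle$, et la continuité de $V$ donne $\psi+V(\psi)=\eta_0$. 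Quant aux majorations de normes: la multiplicativité de $\norm{\,\cdot\,}_R$ donne $\norm{P\psi}_R=\norm P_R\norm\psi_R$, et (si $\psi\neq0$) le calcul du terme décisif montre que $\norm{P\psi}_R$ est atteinte en un degré $d+m\geq d>\deg\rho$, où le coefficient de $\phi=P\psi+\rho$ coïncide avec celui de $P\psi$; d'où $\norm\phi_R\geq\norm P_R\norm\psi_R$, puis $\norm\rho_R=\norm{\phi-P\psi}_R\leq\sup(\norm\phi_R,\norm{P\psi}_R)=\norm\phi_R$.

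Le point délicat est précisément la convergence de cette série de Neumann: la majoration brutale ne fournit que $\norm{V}\leq 1$, et non une contraction, car l'hypothèse n'impose pas que $d$ soit l'\emph{unique} indice réalisant $\norm P_R$. C'est l'estimation triangulaire $\abs{(V\eta)_m}R^m\leq\sup_{j\geq m+1}\abs{\eta_j}R^j$ — conséquence de ce que $P_1$ est de degré strictement inférieur à $d$ — qui remplace la contraction et demande d'être vérifiée avec soin à partir de la définition de $V$ et de l'inégalité ultramétrique.
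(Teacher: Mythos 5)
Votre démonstration est correcte, mais la partie \og existence \fg{} suit une route réellement différente de celle du texte. Le texte procède par troncature: il approche $\phi$ par ses polynômes tronqués $\phi_N$, effectue la division euclidienne $\phi_N=Q_NP+R_N$ dans $F[T]$, puis utilise les majorations de normes (établies au préalable pour \emph{toute} décomposition, exactement par votre argument du \og terme décisif \fg) pour montrer que $(Q_N)$ et $(R_N)$ sont de Cauchy et passer à la limite; ces mêmes majorations donnent l'unicité. Vous remplacez cette étape par une équation de point fixe $(\mathrm{id}+V)\psi=\eta_0$ résolue par série de Neumann, l'estimation $\abs{(V\eta)_m}R^m\leq\sup_{j\geq m+1}\abs{\eta_j}R^j$ (décalage du support d'un cran, qui utilise $\deg P_1<d$ et la normalisation $\abs{c_d}R^d=\norm P_R$) tenant lieu de contraction; c'est la preuve classique du théorème de division de Weierstrass en géométrie rigide, et votre dernière remarque — la majoration brute ne donne que $\norm V\leq1$, insuffisante, car $d$ n'est pas supposé être l'unique indice réalisant $\norm P_R$ — identifie exactement le point délicat et le résout correctement. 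Les deux approches reposent sur la complétude de $F\langle T/R\rangle$; celle du texte est plus élémentaire (elle se ramène à la division polynomiale), la vôtre est plus intrinsèque à l'algèbre de Tate et s'étend sans changement au cas où $P$ est lui-même une série restreinte dont la norme est atteinte au plus grand indice. Vos preuves de l'unicité et des inégalités de normes coïncident pour l'essentiel avec celles du texte.
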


\begin{proof}
Avant d'établir l'existence et l'unicité, 
démontrons les inégalités qui concluent cet énoncé. 
Considérons une relation $\phi=\psi P + \rho$, où $\rho\in F[T]$
est un polynôme de degré~$<d$ et $\psi\in F\langle T/R\rangle$.
Les inégalités à démontrer sont évidentes lorsque $\psi=0$;
supposons donc $\psi\neq0$,
et considérons le plus grand entier, $n$, tel que $\norm\psi_R$ soit donné
par le coefficient de~$\psi$ d'exposant~$n$; alors
$\norm{\phi-\rho}_R=\norm{\psi P}_R$ est donné par le coefficient d'exposant~$n+d$.
Puisque $\deg(\rho)<d$, on a donc $\norm{\phi-\rho}_R \leq \norm{\phi}_R$,
d'où $\norm\psi_R \norm P_R\leq \norm\phi_R$. 
Puis $\norm \rho_R=\norm{\phi-\psi P}\leq \sup(\norm \phi_R,\norm{\psi P}_R)
\leq \norm\phi_R$. 

Établissons maintenant l'unicité d'une décomposition
comme dans l'énoncé. Si $(\rho,\psi)$ et $(\rho',\psi')$
sont deux tels couples, on a $0 = (\psi'-\psi)P+(\rho'-\rho)$.
Les inégalités précédentes entraînent $\psi'-\psi=0$ puis $\rho'-\rho=0$.

Démontrons maintenant l'existence d'un tel couple $(\rho,\psi)$.
Pour tout entier~$N$, notons $\phi_N$ l'unique polynôme de~$F[T]$
de degré~$\leq N$ tel que $\ord(\phi-\phi_N)>N$.
Considérons alors la division euclidienne de~$\phi_N$ par~$P$,
disons $\phi_N = Q_NP +R_N$.
Pour $N\geq M>d$, on a  donc
\[ \phi_N-\phi_M = (Q_N-Q_M)P + (R_N-R_M). \]
Par conséquent,  on a 
\[ \norm{Q_N-Q_M}_R \norm P_R ,  \norm{R_N-R_M}\leq \norm{\phi_N-\phi_M}. \]
%
 
Lorsque $M,N$ tendent vers l'infini, $\norm{\phi_M-\phi_N}_R$ tend vers~$0$,
donc $\norm{Q_N-Q_M}_R$ tend vers~$0$, de même que $\norm{R_M-R_N}_R$.
Par suite, la suite $(R_N)$ converge vers un polynôme~$\rho$
de degré~$<d$, et la suite~$(Q_N)$ converge vers un élément~$\psi$
de~$F\langle T/R\rangle$. En passant à la limite, on a $\phi=\psi Q+\rho$.
\end{proof}

\begin{prop}[Théorème de « préparation » de Weierstrass]
Soit $\phi\in F\langle T/R\rangle$, notons $\phi=\sum_{n=0}^\infty c_nT^n$,
et soit $N$ le plus grand entier~$n$ tel que $\abs{c_N}R^N=\norm\phi_R$.
Il existe un unique couple $(P,\psi)$ tel que $\phi=\psi P$, 
où $P$ est polynôme unitaire de degré~$N$ dans~$F[T]$ 
et $\psi$ est un élément inversible $\psi\in F\langle T/R\rangle$.
\end{prop}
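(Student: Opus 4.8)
Le travail porte sur l'existence, l'unicité étant rapide; le plan est d'établir l'existence par une méthode d'approximations successives de type newtonien, n'utilisant que la proposition de division euclidienne qui précède, la proposition~\ref{prop.tate-inversible} et la complétude de $F\langle T/R\rangle$. Quitte à remplacer $\phi$ par $\phi/c_N$ (licite car $c_N\in F^\times$, et cela ne modifie que le facteur inversible), je supposerais $c_N=1$; alors $\norm\phi_R=R^N$, avec $\abs{c_n}R^n\leq R^N$ pour tout~$n$, strictement si $n>N$. Posant $\phi_0=\phi$, et ayant construit $\phi_k\in F\langle T/R\rangle$ de norme~$R^N$ pour lequel~$N$ est le plus grand indice atteignant cette norme, je formerais le polynôme unitaire $P_k$ de degré~$N$ obtenu en conservant de~$\phi_k$ ses termes de degré~$<N$ et en lui adjoignant~$T^N$; comme $\norm{P_k}_R=R^N$, c'est un diviseur admissible, et la proposition de division donne $\phi_k-P_k=P_ks_k+r_k$ avec $\deg r_k<N$, $\norm{s_k}_R\leq\delta_k/R^N$ et $\norm{r_k}_R\leq\delta_k$, où $\delta_k=\norm{\phi_k-P_k}_R$; on a $\delta_k<R^N$ (pour $k=0$, $\delta_0=\norm{\sum_{n>N}c_nT^n}_R<R^N$). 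La série $\psi_k=1+s_k$ est donc inversible (proposition~\ref{prop.tate-inversible}), et je poserais $\phi_{k+1}=\phi_k\psi_k^{-1}=P_k+r_k\psi_k^{-1}$.

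Le point essentiel est l'estimation \emph{quadratique} $\delta_{k+1}\leq\delta_k^2/R^N$: en écrivant $\psi_k^{-1}=1+t_k$ avec $\norm{t_k}_R\leq\norm{s_k}_R$, on a $\phi_{k+1}-P_k=r_k(1+t_k)$, et comme $\deg r_k<N$, la différence $\phi_{k+1}-P_{k+1}$ ne provient que de $r_kt_k$, de norme $\leq\norm{r_k}_R\norm{t_k}_R\leq\delta_k^2/R^N$. On vérifie en même temps, au moyen de $\norm{r_k\psi_k^{-1}}_R\leq\delta_k<R^N$, que $\phi_{k+1}$ est encore de norme~$R^N$ avec~$N$ pour plus grand indice atteignant cette norme, ce qui permet de poursuivre la récurrence. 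Comme $\delta_0<R^N$, la suite $(\delta_k)$ tend vers~$0$; alors $\norm{\phi_{k+1}-\phi_k}_R=R^N\norm{s_k}_R\leq\delta_k$ tend vers~$0$, donc $(\phi_k)$ converge vers un élément~$P$, qui est un polynôme unitaire de degré~$N$ puisque $\phi_k-P_k\to 0$ tandis que $P_k$ tend vers la somme de~$T^N$ et d'un polynôme de degré~$<N$. Par ailleurs $\norm{\psi_k-1}_R\to 0$, si bien que le produit infini $\psi=\prod_{k\geq0}\psi_k$ converge dans $F\langle T/R\rangle$ et y est inversible (les produits partiels et la suite de leurs inverses convergent, et leur produit vaut~$1$). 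En passant à la limite dans l'identité $\phi=\phi_k\prod_{i<k}\psi_i$, on obtient $\phi=P\psi$, ce qui achève l'existence.

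Pour l'unicité, soit $\phi=P\psi=P'\psi'$ avec $P,P'$ unitaires de degré~$N$ et $\psi,\psi'$ inversibles; on prend pour~$P$ celui construit ci-dessus, pour lequel $\norm P_R=R^N$ est atteint au terme dominant, de sorte que le polygone de Newton de~$P$ n'a que des pentes $\leq\log R$: d'après le théorème~\ref{theo.newton-pol-pol}, ses racines dans une clôture algébrique sont toutes de valeur absolue $\leq R$, et par suite aucun polynôme non nul de degré~$<N$ n'est multiple de~$P$ dans $F\langle T/R\rangle$ (un tel multiple s'annulerait aux $N$ racines de~$P$, toutes situées dans la boule $B(0,R)$). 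La division de~$P'$ par~$P$ s'écrit $P'=Pq+s$ avec $\deg s<N$; de $(Pq+s)\psi'=P\psi$ on tire $s\psi'=P(\psi-q\psi')$, donc $s$ est multiple de~$P$ ($\psi'$ étant inversible), d'où $s=0$; alors $P$ divise~$P'$ dans $F\langle T/R\rangle$, donc dans $F[T]$, et le quotient — polynôme constant de coefficient dominant~$1$ — vaut~$1$, si bien que $P'=P$ et $\psi'=\psi$.

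L'obstacle principal sera d'obtenir l'estimation quadratique et, surtout, de faire passer à chaque étape l'invariant « la norme $\norm{\phi_k}_R=R^N$ est atteinte en l'indice maximal~$N$ »: c'est lui qui assure simultanément l'inversibilité des~$\psi_k$ et le fait que le polynôme limite soit de degré exactement~$N$.
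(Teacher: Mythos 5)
Votre démonstration est correcte, mais elle suit un chemin réellement différent de celui du texte. Le texte tronque $\phi$ en les polynômes $\phi_M=\sum_{n\leq M}c_nT^n$, applique à chacun la factorisation par pentes du polygone de Newton (théorème~\ref{theo.newton-pol-pol} et le corollaire de factorisation à la Hensel) pour en extraire le facteur unitaire~$P_M$ de degré~$N$ dont le polygone coïncide avec celui de~$\phi$ sur~$[0;N]$, puis passe à la limite; l'inversibilité de $\psi=\lim\psi_M$ est obtenue, comme chez vous, via la proposition~\ref{prop.tate-inversible}. Vous remplacez cette factorisation polynomiale par un schéma d'approximations successives à convergence quadratique qui ne repose que sur la proposition de division qui précède et sur la complétude de $F\langle T/R\rangle$: c'est l'argument classique « division de Weierstrass entraîne préparation ». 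Votre version est plus élémentaire (elle n'utilise pas le polygone de Newton pour l'existence) et plus complète: vous justifiez la convergence des suites $(P_k)$ et $(\psi_k)$, et vous donnez l'argument d'unicité que le texte laisse implicite; en contrepartie, la construction du texte fournit gratuitement que le polygone de Newton de~$P$ est la restriction de celui de~$\phi$ à~$[0;N]$. Deux vétilles: l'égalité $\norm{\phi_{k+1}-\phi_k}_R=R^N\norm{s_k}_R$ devrait se lire $R^N\norm{t_k}_R$ (la majoration par~$\delta_k$ subsiste); et, dans l'unicité, pour déduire $s=0$ de « $s$ est multiple de~$P$ et $\deg s<N$ », il est plus court d'invoquer l'unicité de la division de~$s$ par~$P$ (comparer $s=P\cdot 0+s$ et $s=Pu+0$) que de compter les racines de~$P$ dans $B(0,R)$, décompte qui exigerait de tenir compte des multiplicités lorsque $P$ n'est pas séparable.
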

\begin{proof}
C'est une variante de l'argument fait pour isoler la partie
de pente donnée dans le polygone de Newton. Pour simplifier
les notations, on suppose $c_N=1$.
Pour tout entier~$M$, soit $\phi_M=\sum_{n=0}^M c_nT^n$.
Si $M>N$, le point $(N,-\log(\abs{c_N}))$ 
est un sommet du polygone de Newton du polynôme~$\phi_M$ 
et il existe un unique polynôme unitaire $P_M\in F[T]$ de degré~$N$
divisant~$\phi_M$ et 
dont le polygone de Newton coïncide avec celui de~$\phi$
sur l'intervalle~$[0;N]$. Posons $\psi_M=\phi_M/P_M$.
Lorsque l'entier~$M$ tend vers l'infini, les suites $(P_M)$ et~$(\psi_M)$
convergent vers un polynôme unitaire $P\in F[T]$ 
et vers une série formelle $\psi\in F\langle T/R\rangle$.
Le polygone de Newton de~$P$ coïncide avec celui de~$\phi$ sur~$[0;N]$.
En particulier, si l'on note $P=\sum_{n=0}^N a_nT^n$,
on a $\abs{a_n}R^n\leq \abs{a_N}R^n$ pour tout $n\in\{0,\dots,N\}$.
Notons $\psi=\sum_{n=0}^\infty b_nT^n$. 
Si $n$ est le plus grand entier tel que $\abs{b_n}R^n=\norm\psi_R$,
on a $\abs{c_{N+n}}R^{N+n}=\norm\psi_R\norm P_R=\norm\phi_R$,
de sorte que $n=0$. Compte
tenu de la proposition~\ref{prop.tate-inversible}, 
cela prouve que $\psi$ est inversible dans~$F\langle T/R\rangle$.
\end{proof}

\begin{coro}[\citealp{Strassmann-1928}]
Un élément non nul  de $F\langle T/R\rangle$
n'a qu'un nombre fini de zéros dans~$B(0;R)$.
\end{coro}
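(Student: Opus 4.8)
L'idée est de déduire l'énoncé directement du théorème de préparation de Weierstrass. Soit $\phi$ un élément non nul de $F\langle T/R\rangle$, que j'écris $\phi = \sum_{n=0}^\infty c_n T^n$. Comme $R>0$ et que la suite $(\abs{c_n}R^n)$ tend vers~$0$ sans être identiquement nulle, la quantité $\norm\phi_R$ est un nombre réel strictement positif, atteinte en un nombre fini d'indices; je note~$N$ le plus grand d'entre eux. Le théorème de préparation fournit alors une factorisation $\phi = \psi P$, où $P\in F[T]$ est un polynôme unitaire de degré~$N$ et $\psi\in F\langle T/R\rangle$ est inversible.

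Je remarquerais ensuite qu'un élément inversible de $F\langle T/R\rangle$ ne s'annule en aucun point de la boule circonférenciée~$B(0;R)$. En effet, si $\psi'$ désigne l'inverse de~$\psi$, alors pour tout $a\in F$ tel que $\abs a\leq R$ l'application d'évaluation $\chi\mapsto\chi(a)$ est un morphisme de $F$-algèbres de $F\langle T/R\rangle$ dans~$F$, de sorte que $\psi(a)\psi'(a)=1$ et donc $\psi(a)\neq0$. Par conséquent, pour $a\in B(0;R)$ on a $\phi(a)=\psi(a)P(a)$, et $\phi(a)=0$ si et seulement si $P(a)=0$.

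Enfin, $P$ est un polynôme non nul de degré~$N$ à coefficients dans le corps~$F$; il possède donc au plus~$N$ racines dans~$F$, a fortiori au plus~$N$ dans~$B(0;R)$. Ainsi $\phi$ a au plus~$N$ zéros dans~$B(0;R)$, ce qui établit le corollaire. Il n'y a ici guère d'obstacle: les seuls points à garder à l'esprit sont que le théorème de préparation s'applique tel quel (aucune hypothèse du type $\phi(0)\neq0$ n'est requise) et que l'évaluation en un point de la boule circonférenciée est un morphisme d'algèbres, donc envoie l'unité~$\psi$ sur un élément non nul de~$F$; on peut d'ailleurs préciser que le nombre de zéros est majoré par l'entier~$N$ ci-dessus, c'est-à-dire par le plus grand indice réalisant $\norm\phi_R$.
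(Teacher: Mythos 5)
Votre démonstration est correcte et suit exactement la voie du texte : factorisation $\phi=\psi P$ par le théorème de préparation de Weierstrass, puis observation qu'un élément inversible de $F\langle T/R\rangle$ ne s'annule pas sur $B(0;R)$, de sorte que les zéros de~$\phi$ sont ceux du polynôme~$P$. Vous explicitez simplement un peu plus le fait que l'évaluation est un morphisme d'algèbres, ce qui justifie la non-annulation de~$\psi$.
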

\begin{proof}
Soit $\phi$ un élément non nul de~$F\langle T/R\rangle$.
Avec les notations de la proposition, 
les zéros de~$\phi$ sont les zéros de~$P$,
car un élément inversible de~$F\langle T/R\rangle$
ne s'annule pas sur $B(0;R)$.
\end{proof}
\begin{rema}
Soit $\phi$ un élément non nul de $F\langle T/R\rangle$.
On démontre comme en analyse complexe que les zéros
de~$\phi$ dans la boule circonférenciée $B(0;R)$ sont isolés.
Si $F$ est localement compact (et sa valeur absolue non triviale),
cette boule $B(0;R)$ est compacte, si bien 
que l'ensemble des zéros de~$\phi$ dans~$B(0;R)$ est fini.
Cette conclusion n'est cependant pas évidente dans le cas général
et on peut voir ce théorème de Strassmann 
comme une trace de la  compacité du « disque analytique » de rayon~$R$
tel que défini, par exemple, dans la théorie de~\cite{Berkovich-1990}.
\end{rema}

\bibliography{Newton}

\end{document}